\newcommand{\globalcolor}[1]{%
  \color{#1}\global\let\default@color\current@color
}
\definecolor{blush}{rgb}{0.87, 0.36, 0.51}
	\definecolor{brightcerulean}{rgb}{0.11, 0.67, 0.84}
	\definecolor{greenryb}{rgb}{0.4, 0.69, 0.2}
\newif\ifdark
\definecolor{darkred}{rgb}{0.9,0.2,0.2}
\definecolor{darkblue}{rgb}{0.7,0.3,1}
\definecolor{darkgreen}{rgb}{0.1,0.9,0.1}
\definecolor{franck}{rgb}{0,0.8,1}
\definecolor{pagebackground}{rgb}{.15,.21,.18}
\definecolor{pageforeground}{rgb}{.84,.84,.85}
\definecolor{symbols}{rgb}{0,0.7,1}
\colorlet{connection}{red!80!black}
\colorlet{boxcolor}{blue!50}
\definecolor{darkred}{rgb}{0.7,0.1,0.1}
\definecolor{darkblue}{rgb}{0.4,0.1,0.8}
\definecolor{darkgreen}{rgb}{0.1,0.7,0.1}
\definecolor{franck}{rgb}{0,0,1}
\definecolor{pagebackground}{rgb}{1,1,1}
\definecolor{pageforeground}{rgb}{0,0,0}
\colorlet{symbols}{blue!90!black}
\colorlet{connection}{red!30!black}
\colorlet{boxcolor}{blue!50!black}
\def\slash{\leavevmode\unskip\kern0.18em/\penalty\exhyphenpenalty\kern0.18em}
\def\dash{\leavevmode\unskip\kern0.18em--\penalty\exhyphenpenalty\kern0.18em}
\DeclareMathAlphabet{\mathbbm}{U}{bbm}{m}{n}
\DeclareFontFamily{U}{BOONDOX-calo}{\skewchar\font=45 }
\DeclareFontShape{U}{BOONDOX-calo}{m}{n}{
  <-> s*[1.05] BOONDOX-r-calo}{}
\DeclareFontShape{U}{BOONDOX-calo}{b}{n}{
  <-> s*[1.05] BOONDOX-b-calo}{}
\DeclareMathAlphabet{\mcb}{U}{BOONDOX-calo}{m}{n}
\SetMathAlphabet{\mcb}{bold}{U}{BOONDOX-calo}{b}{n}
\setlist{noitemsep,topsep=4pt,leftmargin=1.5em}
\DeclareMathAlphabet{\mathbbm}{U}{bbm}{m}{n}
\DeclareMathAlphabet{\mcb}{U}{BOONDOX-calo}{m}{n}
\SetMathAlphabet{\mcb}{bold}{U}{BOONDOX-calo}{b}{n}
\DeclareFontFamily{U}{mathx}{\hyphenchar\font45}
\DeclareFontShape{U}{mathx}{m}{n}{
      <5> <6> <7> <8> <9> <10>
      <10.95> <12> <14.4> <17.28> <20.74> <24.88>
      mathx10
      }{}
\DeclareSymbolFont{mathx}{U}{mathx}{m}{n}
\DeclareMathSymbol{\bigtimes}{1}{mathx}{"91}
\def\s{\mathfrak{s}}
\providecommand{\figures}{false}
{ \ifthenelse{\equal{\figures}{false}} {#1}{\[ {\rm Figure \ missing !} \]} }{}
\def\id{\mathrm{id}}
\def\CP{\mathcal{P}}
\def\CA{\mathcal{A}}
\def\CC{\mathcal{C}}
\def\CQ{\mathcal{Q}}
\def\CM{\mathcal{M}}
\def\CT{\mathcal{T}}
\tikzstyle{tinydots}=[dash pattern=on \pgflinewidth off \pgflinewidth]
\tikzstyle{superdense}=[dash pattern=on 4pt off 1pt]
\def\root{\mathrm{root}}
\def\nonroot{\mathrm{non-root}}
\newcommand{\beq}{\begin{equation}}
\newcommand{\eeq}{\end{equation}}
\newcommand{\T}{\mathbf{T}}
\def\Labe{\mathfrak{e}}
\def\Labn{\mathfrak{n}}
\def\Labhom{\mathfrak{t}}
\def\Lab{\mathfrak{L}}
\def\Deltam{\Delta^{\!-}}
\def\Deltap{\Delta^{\!+}}
\def\hattimes{\mathbin{\hat\otimes}}
\def\${|\!|\!|}
\newenvironment{DIFnomarkup}{}{} 
\newcommand{\wt}{\widetilde} \newcommand{\wh}{\widehat}
\newcommand{\ccurvearrowright}{{\curvearrowright\hskip -3.4mm\curvearrowright}}
\newcommand{\rrightarrow}{{\to\hskip -4.9mm\raise 1pt\hbox{$\to$}}}
\newcommand{\uuparrow}{{\uparrow\hskip-4.5pt\uparrow}}
\newcommand{\lbutcher}{\circ\hskip -4.5pt\to}
\def\diagramme #1{\vskip 4mm \centerline {#1} \vskip 4mm}
\newfont{\indic}{bbmss12}
\def\Nabla_#1{\nabla_{\!#1}}
    \pgfmathsetlength{\pgf@xb}{\pgfkeysvalueof{/pgf/outer xsep}}%
    \pgfmathsetlength{\pgf@yb}{\pgfkeysvalueof{/pgf/outer ysep}}%
\def\symbol#1{\textcolor{symbols}{#1}}
\def\decorate#1#2{
        \ifnum#2>0
    		\foreach \count in {1,...,#2}{
	       	let
				\p1 = (sourcenode.center),
                \p2 = (sourcenode.east),
				\n1 = {\x2-\x1},
				\n2 = {1mm},
				\n3 = {(1.3+0.6*(\count-1))*\n1},
				\n4 = {0.7*\n1}
			in 
        		node[rectangle,fill=symbols,rotate=30,inner sep=0pt,minimum width=0.2*\n2,minimum height=\n2] at ($(sourcenode.center) + (\n3,\n4)$) {}
				}
		\fi
        \ifnum#1>0
    		\foreach \count in {1,...,#1}{
	       	let
				\p1 = (sourcenode.center),
                \p2 = (sourcenode.east),
				\n1 = {\x2-\x1},
				\n2 = {1mm},
				\n3 = {(1.3+0.6*(\count-1))*\n1},
				\n4 = {0.7*\n1}
			in 
        		node[rectangle,fill=symbols,rotate=-30,inner sep=0pt,minimum width=0.2*\n2,minimum height=\n2] at ($(sourcenode.center) + (-\n3,\n4)$) {}
				}
		\fi
}
\tikzset{
    dectriangle/.style 2 args={
        triangle,
        alias=sourcenode,
        append after command={\decorate{#1}{#2}}
    },
    dectriangle/.default={0}{0},
}
\tikzset{
	cross/.style={path picture={ 
  		\draw[symbols]
			(path picture bounding box.south east) -- (path picture bounding box.north west) (path picture bounding box.south west) -- (path picture bounding box.north east);
		}},
root/.style={circle,fill=green!50!black,inner sep=0pt, minimum size=1.2mm},
        dot/.style={circle,fill=pageforeground,inner sep=0pt, minimum size=1mm},
        dotred/.style={circle,fill=pageforeground!50!pagebackground,inner sep=0pt, minimum size=2mm},
        var/.style={circle,fill=pageforeground!10!pagebackground,draw=pageforeground,inner sep=0pt, minimum size=3mm},
        kernel/.style={semithick,shorten >=2pt,shorten <=2pt},
        kernels/.style={snake=zigzag,shorten >=2pt,shorten <=2pt,segment amplitude=1pt,segment length=4pt,line before snake=2pt,line after snake=5pt,},
        rho/.style={densely dashed,semithick,shorten >=2pt,shorten <=2pt},
           testfcn/.style={dotted,semithick,shorten >=2pt,shorten <=2pt},
        renorm/.style={shape=circle,fill=pagebackground,inner sep=1pt},
        labl/.style={shape=rectangle,fill=pagebackground,inner sep=1pt},
        xic/.style={very thin,circle,draw=symbols,fill=symbols,inner sep=0pt,minimum size=1.2mm},
        g/.style={very thin,rectangle,draw=symbols,fill=symbols!10!pagebackground,inner sep=0pt,minimum width=2.5mm,minimum height=1.2mm},
        xi/.style={very thin,circle,draw=symbols,fill=symbols!10!pagebackground,inner sep=0pt,minimum size=1.2mm},
	xies/.style={very thin,rectangle,fill=green!50!black!25,draw=symbols,inner sep=0pt,minimum size=1.1mm},
	xiesf/.style={very thin,rectangle,fill=green!50!black,draw=symbols,inner sep=0pt,minimum size=1.1mm},
        xix/.style={very thin,crosscircle,fill=symbols!10!pagebackground,draw=symbols,inner sep=0pt,minimum size=1.2mm},
        X/.style={very thin,cross,rectangle,fill=pagebackground,draw=symbols,inner sep=0pt,minimum size=1.2mm},
	xib/.style={thin,circle,fill=symbols!10!pagebackground,draw=symbols,inner sep=0pt,minimum size=1.6mm},
	xie/.style={thin,circle,fill=green!50!black,draw=symbols,inner sep=0pt,minimum size=1.6mm},
	xid/.style={thin,circle,fill=symbols,draw=symbols,inner sep=0pt,minimum size=1.6mm},
	xibx/.style={thin,crosscircle,fill=symbols!10!pagebackground,draw=symbols,inner sep=0pt,minimum size=1.6mm},
	kernels2/.style={very thick,draw=connection,segment length=12pt},
	keps/.style={thin,draw=symbols,->},
	kepspr/.style={thick,draw=connection,->},
	krho/.style={thin,draw=symbols,superdense,->},
	krhopr/.style={thick,draw=connection,superdense},
	triangle/.style = { regular polygon, regular polygon sides=3},
	not/.style={thin,circle,draw=connection,fill=connection,inner sep=0pt,minimum size=0.5mm},
	diff/.style = {very thin,draw=symbols,triangle,fill=red!50!black,inner sep=0pt,minimum size=1.6mm},
	diff1/.style = {very thin,dectriangle={1}{0},fill=red!50!black,draw=symbols,inner sep=0pt,minimum size=1.6mm},
	diff2/.style = {very thin,dectriangle={1}{1},fill=red!50!black,draw=symbols,inner sep=0pt,minimum size=1.6mm},
		diffmini/.style = {very thin,rectangle,fill=black,draw=black,inner sep=0pt,minimum size=0.75mm},
	 kernelsmod/.style={very thick,draw=connection,segment length=12pt},
	 rec/.style = {very thin,rectangle,fill=black,draw=black,inner sep=0pt,minimum size=2mm},
	cerc/.style={very thin,circle,draw=black,fill=symbols,inner sep=0pt,minimum size=2mm},
	stars/.style={very thin,star,star points=6,star point ratio=0.5, draw=black,fill=red,inner sep=0pt,minimum size=0.7mm},
	>=stealth,
        }
        \tikzset{
root/.style={circle,fill=black!50,inner sep=0pt, minimum size=3mm},
        circ/.style={circle,fill=white,draw=black,very thin,inner sep=.5pt, minimum size=1.2mm},
        round1/.style={fill=white,outer sep = 0,inner sep=2pt,rounded corners=1mm,draw,text=black,thin,minimum size=1.2mm},
          circ1/.style={circle,fill=red!10,draw=red,very thin,inner sep=.5pt, minimum size=1.2mm},
        rect/.style={fill=white,outer sep = 0,inner sep=2pt,rectangle,draw,text=black,thin,minimum size=1.2mm},
        rect1/.style={fill=white,outer sep = 0,inner sep=2pt,rectangle,draw,text=black,thin,minimum size=1.2mm},
        round2/.style={fill=red!10,outer sep = 0,inner sep=2pt,rounded corners=1mm,draw,text=black,thin,minimum size=1.2mm},
       round3/.style={fill=blue!10,outer sep = 0,inner sep=2pt,rounded corners=1mm,draw,text=black,thin,minimum size=1.2mm}, 
        rect2/.style={fill=black!10,outer sep = 0,inner sep=2pt,rectangle,draw,text=black,thin,minimum size=1.2mm},
        dot/.style={circle,fill=black,inner sep=0pt, minimum size=1.2mm},
        dotred/.style={circle,fill=black!50,inner sep=0pt, minimum size=2mm},
        var/.style={circle,fill=black!10,draw=black,inner sep=0pt, minimum size=3mm},
        kernel/.style={semithick,shorten >=2pt,shorten <=2pt},
         diag/.style={thin,shorten >=4pt,shorten <=4pt},
        kernel1/.style={thick},
        kernels/.style={snake=zigzag,shorten >=2pt,shorten <=2pt,segment amplitude=1pt,segment length=4pt,line before snake=2pt,line after snake=5pt,},
		kernels1/.style={snake=zigzag,segment amplitude=0.5pt,segment length=2pt},
		rho1/.style={densely dotted,semithick},
        rho/.style={densely dashed,semithick,shorten >=2pt,shorten <=2pt},
           testfcn/.style={dotted,semithick,shorten >=2pt,shorten <=2pt},
           visible/.style={draw, circle, fill, inner sep=0.25ex},
        renorm/.style={shape=circle,fill=white,inner sep=1pt},
        labl/.style={shape=rectangle,fill=white,inner sep=1pt},
        xic/.style={very thin,circle,fill=symbols,draw=black,inner sep=0pt,minimum size=1.2mm},
        xi/.style={very thin,circle,fill=blue!10,draw=black,inner sep=0pt,minimum size=1.2mm},
	xib/.style={very thin,circle,fill=blue!10,draw=black,inner sep=0pt,minimum size=1.6mm},
	xie/.style={very thin,circle,fill=green!50!black,draw=black,inner sep=0pt,minimum size=1mm},
	xid/.style={very thin,circle,fill=symbols,draw=black,inner sep=0pt,minimum size=1.6mm},
	edgetype/.style={very thin,circle,draw=black,inner sep=0pt,minimum size=5mm},
	nodetype/.style={very thick,circle,draw=black,inner sep=0pt,minimum size=5mm},
	kernels2/.style={very thick,draw=connection,segment length=12pt},
clean/.style={thin,circle,fill=black,inner sep=0pt,minimum size=1mm},	not/.style={thin,circle,fill=symbols,draw=connection,fill=connection,inner sep=0pt,minimum size=0.8mm},
	>=stealth,
        }
\def\DeclareSymbol#1#2#3{%
	\expandafter\gdef\csname MH@symb@#1\endcsname{\tikzsetnextfilename{symbol#1}%
	\tikz[baseline=#2,scale=0.15,draw=symbols,line join=round]{#3}}%
	\expandafter\gdef\csname MH@symb@#1s\endcsname{\scalebox{0.75}{\tikzsetnextfilename{symbol#1}%
	\tikz[baseline=#2,scale=0.15,draw=symbols,line join=round]{#3}}}%
	\expandafter\gdef\csname MH@symb@#1ss\endcsname{\scalebox{0.65}{\tikzsetnextfilename{symbol#1}%
	\tikz[baseline=#2,scale=0.15,draw=symbols,line join=round]{#3}}}%
	}
\def\<#1>{\ifthenelse{\boolean{mmode}}{\mathchoice{\csname MH@symb@#1\endcsname}{\csname MH@symb@#1\endcsname}{\csname MH@symb@#1s\endcsname}{\csname MH@symb@#1ss\endcsname}}{\csname MH@symb@#1\endcsname}}
\def\MHmakebox#1#2#3{\tikz[baseline=#3,line width=#1,cross/.style={path picture={ 
  \draw[black](path picture bounding box.south) -- (path picture bounding box.north) (path picture bounding box.west) -- (path picture bounding box.east);
}}]{\draw[white] (0,0) rectangle (#2,#2);
\draw[black,cross] (0.1em,0.1em) rectangle (#2-0.1em,#2-0.1em);}}
\DeclareMathOperator*{\bplus}{
\tikzexternaldisable
\mathchoice{\MHmakebox{.17ex}{1.5em}{0.48em}}{\MHmakebox{.12ex}{1.1em}{0.28em}}{}{}
\tikzexternalenable
}
 \def\1{\mathbf{\symbol{1}}}
\def\one{\mathbf{1}}
\def\eps{\varepsilon}
\DeclareMathAlphabet{\mathpzc}{OT1}{pzc}{m}{it}
\let\eps\varepsilon
\def\eqref#1{(\ref{#1})}
\newcommand*{\bigcdot}{}
\DeclareRobustCommand*{\bigcdot}{%
  \mathbin{\mathpalette\bigcdot@{}}%
}
\newcommand*{\bigcdot@scalefactor}{.5}
\newcommand*{\bigcdot@widthfactor}{1.15}
\newcommand*{\bigcdot@}[2]{%
  \sbox0{$#1\vcenter{}$}
  \sbox2{$#1\cdot\m@th$}%
  \hbox to \bigcdot@widthfactor\wd2{%
    \hfil
    \raise\ht0\hbox{%
      \scalebox{\bigcdot@scalefactor}{%
        \lower\ht0\hbox{$#1\bullet\m@th$}%
      }%
    }%
    \hfil
  }%
}
\def\two{{\<generic>\kern0.05em\<genericb>}}
\def\twoI{{\<Ito>\kern0.05em\<Itob>}}
\def\mail#1{\burlalt{#1}{mailto:#1}}
\declaretheorem[style=definition]{example}
\begin{document}
\renewcommand\thmcontinues[1]{Continued}

\title{Algebraic deformation for (S)PDEs}
\author{Yvain Bruned$^1$, Dominique Manchon$^2$}
\institute{University of Edinburgh \and
LMBP, CNRS - Universit\'e Clermont-Auvergne, \\
Email:\ \begin{minipage}[t]{\linewidth}
\mail{Yvain.Bruned@ed.ac.uk}, \mail{Dominique.Manchon@uca.fr}.
\end{minipage}}

\maketitle

\begin{abstract}
We introduce a new algebraic framework based on the deformation of pre-Lie products. This allows us to provide a new construction of the algebraic objects at play in Regularity Structures in the works by Bruned, Hairer
and Zambotti (2019) and by Bruned and Schratz (2022) for
deriving a general scheme for dispersive PDEs at low regularity. This construction also explains how the algebraic structure  by Bruned et al. (2019) cited above can be viewed as a deformation of the Butcher-Connes-Kreimer   and the extraction-contraction Hopf algebras. We start by deforming various pre-Lie products via a Taylor deformation and then we apply the Guin-Oudom procedure which gives us an associative product whose adjoint can be compared with known coproducts. This work reveals that pre-Lie products and their deformation can be a central object in the study of (S)PDEs.
\\[.4em]
\noindent {\scriptsize \textit{Keywords:} Deformation, dispersive PDE, stochastic PDE,  Hopf algebras, renormalisation}\\
\noindent {\scriptsize\textit{MSC classification:} 60H15, 16T05} 
\end{abstract}
\setcounter{tocdepth}{2}
\tableofcontents

\section{Introduction}
\label{sect:intro}

Decorated trees and their various Hopf algebra structures have revealed themselves to be very efficient for solving (stochastic) partial differential equations ((S)PDEs). Indeed, the theory of singular SPDEs has developed very fast in the last years. This is due to the theory of Regularity Structures introduced by Martin Hairer in \cite{reg}. Many equations whose well-posedness was open are now covered by this theory like the KPZ equation, $ \phi^{4}_3 $ (see \cite{reg}),   $ \phi^{4}_{4-\delta} $ (see \cite{BCCH}), sine-Gordon in the full subcritical regime (see \cite{CHS}), generalised KPZ equation (see \cite{BGHZ}), Yang-Mills in dimension two (see \cite{CCHS}). For surveys on this developement one can look at \cite{EMS,BaiHos} or the textbook \cite{FrizHai}.
The idea of Regularity Structures takes its root in rough paths \cite{Lyo98,Gub04,Gub10}; one wants to approximate the solution of a singular SPDE by a Taylor expansion whose monomials are iterated integrals. This type of expansion is truncated to a sufficiently high order but it could be arbitrarily large when we consider a model close to subcriticality like $ \phi^{4}_{4-\delta} $. This is where algebraic structures enter the game for dealing with the computational challenge of such an expansion.\\

The core of this structure resides in decorated trees which are suitable combinatorial objects for encoding iterated integrals.
Indeed, decorations on the edges encode noises, convolution with kernels and derivatives on the previous objects, whereas decorations on the vertices correspond to monomials over $ d+1 $ variables for the time and the spatial components. Recently, decorated trees have been also used in the context of PDEs for writing a resonance-based scheme for dispersive equations in \cite{BS}. They allow to push an efficient numerical scheme from the first to the higher orders of approximation. It also covers a large class of equations. This reveals that more applications to PDEs could be expected in the future.  \\
 
Such a structure was sketched in \cite{reg} and formalised in \cite{BHZ}. Two Hopf algebras act on the solution expansion: the first one recenters iterated integrals around a base point, the second renormalises these integrals which can be ill-defined in many situations due to the singularity of the noises. The convergence of the renormalised iterated integrals is proved in \cite{CH16}. These two structures can be understood as a generalisation of the  Butcher-Connes-Kreimer Hopf algebra \cite{Butcher72,CK1,CK2} and the extraction-contraction Hopf algebra \cite{CHV10,CEM} which appear both in Numerical analysis and Quantum field theory.  We want to derive a  rigorous mathematical statement describing this analogy. The idea is to construct a map that will transport all the properties from the original structures to the ones used for singular SPDEs.

\begin{theorem} \label{deformation}
The structures defined in  \cite{BHZ}  and \cite{BS} are  deformations of the  Butcher-Connes-Kreimer and the extraction-contraction Hopf algebras.
\end{theorem}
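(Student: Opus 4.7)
The plan is to realize both Hopf algebras of \cite{BHZ} and \cite{BS} through a single algebraic machine: start from the two classical pre-Lie products behind the Butcher\dash Connes\dash Kreimer and extraction-contraction Hopf algebras, deform them with a Taylor-type rule on vertex decorations, pass through the Guin\dash Oudom functor, and then dualise. Concretely, I would first isolate the two pre-Lie products of reference. For Butcher\dash Connes\dash Kreimer the relevant product is the Chapoton\dash Livernet grafting $\graftI$ on non-decorated rooted trees, whose Guin\dash Oudom extension is known to give an associative product on the symmetric algebra whose graded dual coproduct is precisely the BCK coproduct. For extraction-contraction, the analogous role is played by the insertion pre-Lie product $\ins$ on (non-decorated) rooted trees, whose Guin\dash Oudom extension dualises to the extraction-contraction coproduct. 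Both facts are classical and I would just recall them at the outset.

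Next I would introduce the Taylor deformation. On decorated trees carrying vertex decorations $\Labn \in \mathbb{N}^{d+1}$ (monomials in time/space variables) and edge decorations $\Labe$ (noises/kernels/derivatives), define deformed pre-Lie products $\graftI^{\!T}$ and $\ins^{\!T}$ by summing over the ways of transporting part of the decoration $\Labn$ of the grafted/inserted root onto the receiving vertex through a binomial (Taylor) rule, together with the appropriate derivative shift of the edge decoration created. The first step is then to check that $\graftI^{\!T}$ and $\ins^{\!T}$ are indeed pre-Lie: this reduces to a bookkeeping of binomial coefficients and of the commutation of two independent Taylor transports on disjoint subtrees, and should follow from a direct computation analogous to the one that shows the undeformed grafting is pre-Lie.

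Once these deformed pre-Lie algebras are in place, apply Guin\dash Oudom to each, producing associative products $\star_{T}$ and $\bullet_{T}$ on the symmetric algebras generated by decorated trees. By construction these are filtered deformations of the undeformed Guin\dash Oudom products, with the undeformed product recovered by setting all vertex decorations equal to zero. Dualise: the adjoints $\Delta_{T}^{+}$ and $\Delta_{T}^{-}$ of $\star_{T}$ and $\bullet_{T}$ are coproducts on decorated forests. The core of the proof is then to identify $\Delta_{T}^{+}$ with the positive BHZ coproduct $\Deltap$ (equivalently the BS coproduct used for resonance expansions) and $\Delta_{T}^{-}$ with the negative BHZ coproduct $\Deltam$. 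This identification is carried out on generators: one expands a tree as iterated graftings/insertions, uses the Guin\dash Oudom formula to turn this into an explicit sum over admissible cuts (respectively over subforests), and matches the resulting Taylor factors with the binomial decorations appearing in the BHZ/BS formulas.

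The main obstacle will be precisely this last matching, since the BHZ coproducts are not defined by a single grafting but by admissible cuts decorated with extraction/insertion data, and the combinatorial factors include sums over ways of distributing vertex decorations between the trunk and the extracted subtrees. The danger is that the Guin\dash Oudom expansion of an iterated deformed grafting produces the right support of cuts but with multiplicities or symmetry factors that must be reconciled with the tree-symmetry normalisation used in \cite{BHZ}. I would handle this by working in the pre-Lie universal enveloping framework of Oudom\dash Guin, where symmetry factors are automatic, and by checking the identification on small trees (up to four nodes) before promoting it to a general statement by induction on the number of vertices using the pre-Lie Leibniz property of $\graftI^{\!T}$ and $\ins^{\!T}$. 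The extension to \cite{BS} then amounts to observing that the Taylor deformation there is the same functor restricted to the specific decorations used for dispersive PDEs, so no further work is needed once the general construction is in place.
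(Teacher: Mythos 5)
Your global architecture (Taylor-deform a pre-Lie product, run Guin--Oudom, dualise) is exactly the paper's, and your treatment of the Butcher--Connes--Kreimer half is sound: the deformed grafting on planted trees dualises to the deformed BCK coproduct, which after a flip of factors is the coproduct of \cite{BS}. The genuine gap is in the other half, and it starts with your parenthetical identification of ``the positive BHZ coproduct (equivalently the BS coproduct)''. These are \emph{not} equivalent, and distinguishing them is the crux of the construction. The BHZ recentering coproduct $\Delta_2$ acts on trees with decorated roots and must be compatible with the derivations $\uparrow^{\omega}$; it is not the dual of the Guin--Oudom product of any deformed grafting on planted trees. The paper has to introduce a different pre-Lie product altogether, the plugging $\rhd$ (identification of the root of $\sigma$ with a vertex of $\tau$, adding decorations), deform it to $\widehat{\rhd}$, and show that the resulting $\star_2$ dualises to $\Delta_2$. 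Your plan has no counterpart of this layer, and the tools you rely on are unavailable for it: $\rhd$ is not free (brace elements $(\sigma_1\cdots\sigma_n)\rhd\tau$ vanish as soon as $n$ exceeds $|N_\tau|$), so no transport-of-structure argument applies; indeed the deformation $\widehat{\rhd}$ is \emph{not} the image of $\rhd$ under the isomorphism $\Theta$ (the transported product $\widetilde{\rhd}$ is symmetric at the root while $\widehat{\rhd}$ is not), and its pre-Lie property is not ``bookkeeping analogous to the undeformed case'' but requires the Chu--Vandermonde identity, because several edges are grafted simultaneously with multinomial coefficients $\binom{\mathfrak{n}_v}{\ell_1,\dots,\ell_n}$ rather than a single binomial transport.

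The same issue breaks your deformed insertion $\ins^{T}$. A local Taylor rule at the insertion vertex is not how the correct deformation works: in the paper the insertion is \emph{defined} through the plugging product, $\sigma\,\blacktriangleright_v\,\tau=(P_v(\tau)\star\sigma)\rhd_v T_v(\tau)$, and the deformation is obtained by replacing $\star$ by $\star_2$, so that the branches above $v$ are merged with $\sigma$ via the full $\star_2$ --- a sum over block decompositions $\mathcal{K}^*$ with simultaneous multi-edge grafting --- before being plugged back into the trunk. The pre-Lie property of $\widehat{\blacktriangleright}$ then follows from the \emph{associativity} of $\star_2$, not from a direct binomial computation, and the duality with $\Delta_1=\Delta_{\text{\tiny{RN}}}$ exploits the recursive formula $\Delta_1=\mathcal{M}^{(13)(2)}(\Delta_{\circ}\hattimes\id)\Delta_2$, which again presupposes $\Delta_2$, hence the plugging layer. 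A one-vertex binomial deformation of insertion does not generate the multinomial factors or the $\mathcal{K}^*$ block sums, and your proposed safeguard of checking small trees would already expose the discrepancy on configurations where two branches sit above the insertion vertex. So the missing idea is concrete: the plugging pre-Lie product and the definition of the (deformed) insertion from $\star$ (resp.\ $\star_2$) are what make the extraction-contraction side of the theorem go through; without them the matching step you flag as ``the main obstacle'' would indeed fail, not merely be delicate.
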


For proving this theorem, one has to build on the specificity given by (S)PDEs. Indeed, one uses in the various applications Taylor expansions which are encoded at the algebraic level in \cite{BHZ}. The main idea is therefore to deform all classical structures by means of "algebraic" Taylor expansions. Derivatives and monomials appear as decorations on decorated trees and by changing them along with the transformation one gets a formal Taylor series with a leading term and terms of lower degrees. \\

Behind Theorem~\ref{deformation}, one can think about a larger programme. Indeed, many algebraic structures use trees and graphs and they have prescribed rules on how to carry some operations as extraction-contraction and cutting edges.
The main difficulty is to untangle all these operations in order to get the basic operations that will generate all the structure. \\

 Starting with the grafting product which is a pre-Lie product and applying an algebraic procedure, D. Guin and J. M. Oudom recovered in \cite{Guin1,Guin2} the Grossman-Larson product \cite{GL}, which is an associative product obtained from the Butcher-Connes-Kreimer coproduct by dualisation and suitable normalisation of the dual basis by symmetry factors. Their procedure works for any pre-Lie product and gives an alternative way of presenting the Hopf algebras at hand. A pre-Lie perspective on renormalisation has already been proposed in  rough paths (see \cite{BCFP}) which was the inspiration for the work \cite{BCCH}. For edge-decorated trees, one needs to use multi-pre-Lie algebras introduced in \cite{BCCH}. 
Multi-pre-Lie algebras are vector spaces (on some field $\mathbf k$ of characteristic zero) endowed with a family of pre-Lie products such that any finite linear combination of those remains a pre-Lie product. For decorated trees, it will be 
 a family of grafting products indexed by the edge decorations. In fact, one can see the whole family as a pre-Lie product by considering only planted trees. In the diagram below we provide a partial answer to theorem~\ref{deformation} and it reveals the approach followed in this paper
 \begin{equs}\label{diag_1}
 \begin{aligned}
\xymatrix{\curvearrowright
  \ar[rr]^{\scriptsize{\text{Guin-Oudom}}}\ar[d]_{\Theta} &&  \tilde\star_{0}  \ar[d]^{\Phi} \ar[rr]^{\scriptsize{\text{Dual}}}  && \Delta_{\scriptsize{\text{CK}}} \ar[d]_{\Psi} \\ \wh \curvearrowright
\ar[rr]^{\scriptsize{\text{Guin-Oudom}}} && \mathcal \star_0 \ar[rr]^{\scriptsize{\text{Dual}}} && \Delta_{\scriptsize{\text{DCK}}}
}
\end{aligned}
\end{equs}
The upper part of the diagram corresponds to the application of the Guin-Oudom procedure to the pre-Lie product $\curvearrowright$  (collection of grafting products). This step has been performed in \cite{F2018} and leads to the construction of a Butcher-Connes-Kreimer coproduct on decorated trees. The lower part of the diagram is the same procedure applied to 
 the deformation $ \wh{\curvearrowright} $ of $ \curvearrowright $ introduced in \cite{BCCH}. This is the content of Theorem~\ref{DCK_def} and leads to the construction of $ \Delta_{\scriptsize{\text{DCK}}} $. It is more involved due to the fact that one has to introduce infinite sums and use bigraded spaces as in \cite{BHZ}. Then, one wants a way to move from the upper part to the lower part. The maps $ \Phi $ and $ \Psi $ are not easy to define. Instead, we construct the isomorphism $ \Theta $ in Theorem~\ref{main} which can be extended naturally to decorated planted trees such that:
\begin{equs} \label{pre_theta}
\Theta \left(  \bar{\sigma} \curvearrowright \bar{\tau} \right) = \Theta(\bar{\sigma}) \, \widehat{\curvearrowright} \, \Theta(\bar{\tau})
\end{equs}
where $ \bar{\sigma}, \bar{\tau} $ are decorated planted trees. Then, the pre-Lie structure of $ \curvearrowright $ is transported to $ \widehat{\curvearrowright} $ via the isomorphism $ \Theta $. The construction of $ \Theta $ relies on the freeness of $ \curvearrowright $ and the fact that:
\begin{equs} \label{defor_prop}
\bar{\sigma} \, \widehat{\curvearrowright} \, \bar{\tau} =  \bar{\sigma} \curvearrowright \bar{\tau} + \text{lower grading terms}
\end{equs}
where one measures the grading of a decorated trees as the sum of its edge decorations. In this way, $ \widehat{\curvearrowright} $ is an algebraic deformation of $ \curvearrowright $. Therefore, the coproduct $ \Delta_{\text{DCK}} $ in the diagram~\ref{diag_1} can be understood as a deformation of the Butcher-Connes-Kreimer coproduct $ \Delta_{\text{CK}} $.
  The coproduct $ \Delta_{\text{DCK}} $ (to be precise, its flipped version obtained by exchanging both sides of the tensor product) has an important application in Numerical Analysis (see \cite{BS}).
  Yet, coproducts in \cite{BHZ} are different from $ \Delta_{\text{DCK}} $. This is due to derivations compatible with the pre-Lie product that needs to be added. In order to repeat the diagram~\ref{diag_1}, one has to work with another pre-Lie product defined by means of  plugging denoted by $ \rhd  $. We obtain a similar diagram when we construct two coproducts $ \Delta_{\text{P}} $ (resp. $ \Delta_{\text{DP}} $) from $ \rhd $ (resp. a deformation $ \wh \rhd $ of $ \rhd $):
  \begin{equs}\label{diag_2} \begin{aligned}
\xymatrix{ \rhd
  \ar[rr]^{\scriptsize{\text{Guin-Oudom}}} &&  \star   \ar[rr]^{\scriptsize{\text{Dual}}}  && \Delta_{\scriptsize{\text{P}}}  \\ \wh \rhd
\ar[rr]^{\scriptsize{\text{Guin-Oudom}}} && \mathcal \star_2 \ar[rr]^{\scriptsize{\text{Dual}}} && \Delta_{\scriptsize{\text{DP}}}
}
\end{aligned}
\end{equs}
Then, after performing an identification one can get from $\Delta_{\scriptsize{\text{DP}}}  $ the coproduct $ \Delta_2 $ which has been introduced in \cite{BHZ} for recentering iterated integrals. The lower part of the diagram is the subject of Theorem~\ref{DP_deformation}. One cannot find an isomorphism that will preserve the pre-Lie structure like in \eqref{pre_theta}, sending $ \rhd $ to $ \wh \rhd $ (see Proposition~\ref{not_commute}). One still has the deformation property \eqref{defor_prop} namely:
\begin{equs} \label{defor_prop_plugging}
\sigma \, \widehat{\rhd} \, \tau =  \sigma \rhd \tau + \text{lower grading terms}
\end{equs}
where $ \sigma, \tau $ are decorated trees. For proving that $ \widehat{\rhd} $ is a pre-Lie product one needs to use the Chu-Vandermonde identity, which we recall in Paragraph \ref{chuvdm}, which is also crucial for deriving the coassociativity in \cite[Prop. 3.11]{BHZ} for the coproduct denoted by $ \Delta_2 $ therein. The pre-Lie product given by $ \Theta $ is $ \widetilde{\rhd} $ satisfying
\begin{equs}
\Theta \left(  \sigma \rhd \tau \right) = \Theta(\sigma) \, \widetilde{\rhd}  \, \Theta(\tau).
\end{equs}
 Then, one is able to give an expression for $ \wh{\rhd} $ in Proposition~\ref{insertion_poly}:
\begin{equs} \label{def_plugging_p}
\sigma \, \wh{\rhd}_{v} \, \tau = \uparrow^{n_{\sigma}}_{v} \left( \Pi \sigma \, \widetilde{\rhd}_{v} \,  \tau \right)
\end{equs}
where $ \wh{\rhd}_{v} $ is the deformed plugging at the node $ v \in N_{\tau}  $ ($N_{\tau}$ nodes of $\tau$), $ \Pi $ removes the node decoration at the root of $ \sigma $ denoted by  $ n_{\sigma} $ and $ \uparrow^{n_{\sigma}}_{v} $ adds the decoration $ n_{\sigma} $ to the node $ v $. 
  We explain in Proposition~\ref{plugging_grafting} how the product $ \star_2 $ can be connected to $ \wh \curvearrowright $:
\begin{equs} \label{link_plugging_grafting_g}
\CI_b \left( \sigma \star_2  \tau \right) = \tilde{\uparrow}^{n_{\sigma}}_{N_{ \tau}} \left( \Pi \sigma \,  \wh \curvearrowright \, \CI_b ( \tau) \right)
\end{equs} 
 where $ \tilde{\uparrow}^{n_{\sigma}}_{N_{ \tau}} $ splits the decoration $ n_{\sigma} $ among the nodes of $ \tau $ and $ \CI_b(\tau) $ grafts the tree $ \tau $ onto a new root via an edge decorated by $ b $. The two formulae \eqref{def_plugging_p} and \eqref{link_plugging_grafting_g} give a different perspective with the use of derivations $ \uparrow^{n_{\sigma}}_{v}  $ and $ \tilde{\uparrow}^{n_{\sigma}}_{N_{ \tau}} $.

For the extraction-contraction coproduct $ \Delta_{\text{EC}} $, we use the Chapoton-Livernet insertion product $ \blacktriangleright $ of a tree into another. It is a pre-Lie product and can be directly defined via the product $ \star $. This definition seems new in comparison with the literature. Then, the deformation $ \wh{\blacktriangleright} $ is obtained by replacing $ \star $ by $ \star_2 $. One can run the Guin-Oudom procedure and get the following diagram:

\begin{equs}\label{diag_3}
\begin{aligned}
\xymatrix{\star\ar[rr]^{\scriptsize{\text{Def}}} &&   \blacktriangleright
  \ar[rr]^{\scriptsize{\text{Guin-Oudom}}} &&  \tilde\star_1   \ar[rr]^{\scriptsize{\text{Dual}}}  && \Delta_{\scriptsize{\text{EC}}}  \\
  \star_2
\ar[rr]^{\scriptsize{\text{Def}}} && \wh{\blacktriangleright}
\ar[rr]^{\scriptsize{\text{Guin-Oudom}}} && \mathcal \star_1 \ar[rr]^{\scriptsize{\text{Dual}}} && \Delta_{1}
}
\end{aligned}
\end{equs}
The fact that $ \star_1 $ is the dual of $ \Delta_1  $ is given in Theorem~\ref{deformed_E_C} wich concludes the proof of Theorem~\ref{deformation}.
We  are also able to write the cointeraction between $ \Delta_2 $ and $ \Delta_1 $ obtained in \cite{BHZ}  at the level of the deformed products, between $ \star_2 $ and $ \wh{\blacktriangleright} $ (see Theorem~\ref{theo_cointeraction}). This cointeraction has been previously observed in \cite{CHV10,CEM} for the non-deformed structures. The coproduct $ \Delta_1 $ is the same one as in \cite{BHZ} but for $ \Delta_2 $ the order of the factors in the tensor product is opposite compared with \cite{BHZ}.

 In the end, we have  proposed one single procedure allowing to recover the structure of \cite{BS} and \cite{BHZ} from the standard Butcher-Connes-Kreimer coproduct. Indeed, the isomorphism $ \Theta$ transports the grafting product used for constructing the Butcher-Connes-Kreimer coproduct to  $ \widehat{\curvearrowright}$ which allows us to recover \cite{BS}. Then the formulae \eqref{def_plugging_p} and \eqref{link_plugging_grafting_g} shows how the construction in \cite{BHZ} is connected to the isomorphism $ \Theta $ and $ \widehat{\curvearrowright} $. In this paper, we view the extraction-contraction coproduct as constructed from the pre-Lie insertion that depends on the  product dual of the Butcher-Connes-Kreimer coproduct. Therefore, the deformed extraction-contraction coproduct follows from this property. Let us mention that the formalism developed in this work has already some impact on the study of singular SPDEs. The work proposes \cite{BB21} a shorter proof of the renormalised equations first established in \cite{BCCH}. This proof is used in \cite{BB21b} for establishing the renormalised equation in a non-translation invariant setting. The spirit of our construction is also used in \cite{Felix} for constructing a structure group for quasi-linear equations. 
\\

Let us outline the paper by summarising the content of its sections. In Section~\ref{section::2}, we recall basics on multi-pre-Lie algebras and explore derivation for these structures. We see  how they could be encoded directly into the grafting product for decorated trees. Then, we present the Taylor deformation considered in this paper. It is constructed via the isomorphism $ \Theta $ in Theorem~\ref{main} which transports the pre-Lie structure of
$ \curvearrowright^a $ to $ \wh{\curvearrowright}^a $. \\

In Section~\ref{section::3}, we consider various products and their deformations. This is where we provide the main results for making Theorem~\ref{deformation} precise. We start by the product $  \curvearrowright $ on decorated  planted trees which groups the products $ \curvearrowright^a $ together. It is deformed via $ \Theta $ into $ \wh{\curvearrowright} $. By applying the Guin-Oudom procedure, we get an associative product $ \star_0 $.  In Theorem~\ref{DCK_def}, we identify the dual of $ \star_0 $ that is the deformed Butcher-Connes-Kreimer coproduct $ \Delta_{\text{\tiny{DCK}}} $. This explicit formula is very useful in Section~\ref{section::4}. Then, we move to another product $ \rhd $, the plugging of a decorated tree into another. We obtain a deformed product $ \wh \rhd $ not coming directly from $ \Theta $. The Guin-Oudom approach gives again an associative product $ \star_2 $ whose adjoint is $ \Delta_2 $ (see Theorem~\ref{DP_deformation}). The last product is the insertion product $\blacktriangleright$ whose deformation $\wh \blacktriangleright$ is constructed from $ \star_2 $. We then define an associative product $ \star_1 $ from $ \wh \blacktriangleright $ whose adjoint $ \Delta_1 $ is a deformed extraction-contraction coproduct (see Theorem~\ref{deformed_E_C}).
The section ends on studying a cointeraction property between deformed products. \\

In Section~\ref{section::4}, we consider  the two main applications of these algebraic structures which are a numerical scheme for dispersive PDEs and the theory of Regularity Structures for singular SPDEs. In subsection~\ref{subsection::4.1}, we focus on the numerical applications and show that the coproduct used for the local error analysis is actually very close to the one obtained in Theorem~\ref{DP_deformation}.
In subsection~\ref{subsection::4.2}, we show that the pre-Lie approach developped in Section~\ref{section::4} gives an immediate and elegant construction of the algebraic objects at play in the theory of Regularity Structures. One gets the recentering coproduct from Theorem~\ref{Deformation_NA} and the extraction-contraction coproduct from Theorem~\ref{Deformation_EC} (the coproduct which is used for implementing the BPHZ algorithm \cite{BP,Hepp,Zim})
Then, the cointeraction between these two maps is related to the pre-Lie cointeraction given in Section~\ref{section::4}.

\subsection*{Acknowledgements}
{\small
We wish to thank the anonymous referee for her/his extremely valuable remarks. First discussions on this work
were initiated while the authors participated in the workshop "Algebraic and geometric aspects of numerical methods for differential equations"
 held at the  Institut Mittag-Leffler in July 2018.
  The authors thank the organisers of this workshop for putting together a stimulating program  bringing different communities together, and the members of the institute for providing a friendly working atmosphere. 
}  

\section{Algebraic deformation of multi-pre-Lie algebras}

\label{section::2}

In this section, we present the notion of multi-pre-Lie algebra introduced in \cite{BCCH}. One of the main examples are decorated trees equipped with a family of grafting products. We consider derivations for this structure and define a Taylor deformation. The latter is the deformation used for constructing various products in  Section~\ref{section::3}.

\subsection{Multi-pre-Lie algebras}
\label{sect:def}
\begin{definition}
Let $E$ be any set. A multi-pre-Lie algebra indexed by $E$ is a vector space $\mathcal P$ over a field $\mathbf k$ of characteristic zero, endowed with a family $(\curvearrowright^a)_{a\in E}$ of bilinear products such that
\begin{equation}\label{mpl}
x\curvearrowright^a (y\curvearrowright^b z)-(x\curvearrowright^a y)\curvearrowright^b z=
y\curvearrowright^b (x\curvearrowright^a z)-(y\curvearrowright^b x)\curvearrowright^a z
\end{equation}
for any $a,b\in E$ and for any $x,y,z\in \mathcal P$.
\end{definition}
In particular, each product $\curvearrowright^a$ is left pre-Lie as well as any finite linear combination of those. The corresponding brace elements are recursively defined by
\begin{equs}
(x_1\cdots x_n) & \curvearrowright^{a_1\cdots a_n} z :=x_1\curvearrowright^{a_1}\Big((x_2\cdots x_n)\curvearrowright^{a_2\cdots a_n}z\Big)\\
& -\sum_{i=2}^n\Big(x_2\cdots x_{i-1}(x_1\curvearrowright^{a_1}x_i)x_{i+1}\cdots x_n
\Big)\curvearrowright^{a_2\cdots a_n}z
\end{equs}
for any $x_1,\ldots,x_n,z\in \mathcal P$. They are invariant under any permutation of the pairs $(x_1,a_n),\ldots,(x_n,a_n)$.
Let us give a few important examples:
\begin{example}\rm
Let $E$ and $V$ be two sets, let $T_E^V$ be the set of rooted trees with vertices decorated by $V$ and edges decorated by $E$, and let $\mathcal T_E^V$ be the linear $\mathbf k$-span of $T_E^V$. \label{rooted_r} The family of pre-Lie products is given by grafting by means of decorated edges, namely:
\begin{equation}
\sigma \curvearrowright^a \tau:=\sum_{v\in  N_{\tau} } \sigma \curvearrowright^a_v  \tau,
\end{equation}
\label{grafting_a}
where $\sigma $ and $\tau$ are two decorated rooted trees, $ N_\tau $ is the set of vertices of $ \tau $ and where $\sigma \curvearrowright^a_v \tau$ is obtained by grafting the tree $\sigma$ on the tree $\tau$ at vertex $v$ by means of a new edge decorated by $a\in E$. The proof of \eqref{mpl} is similar to the proof of the fact that grafting is a pre-Lie operation, and can be visualized by the following picture, symmetric with respect to the two branches:
\begin{equs}
\begin{tikzpicture}[scale=0.19]
         \node at (-2,6)  (f) {}; 
           \node at (2,6)  (g) {}; 
        \node at (0,0) (c) {}; 
        
     \draw[kernel1] (c) --   (f) ;  
     \draw[kernel1] (c) --   (g) ; 
    \draw (g) node [fill=white,label={[label distance=0em]center: \scriptsize  $ \tau $}] {}  ;
    \draw (f) node [fill=white,label={[label distance=0em]center: \scriptsize  $ \sigma $}] {}  ;
    \draw (c) node [fill=white,label={[label distance=0em]center: \scriptsize  $ w $} ] {}  ;
    \node at (-1,3) [fill=white,label={[label distance=0em]center: \scriptsize  $ a $} ] () {};
    \node at (1,3) [fill=white,label={[label distance=0em]center: \scriptsize  $ b $} ] () {};
\end{tikzpicture}
\end{equs}
where $ \sigma $, $ \tau $ and $ w $ are decorated trees.
Below we present an example of grafting:
\begin{equs} \label{ex_grafting}
\bullet_\alpha\curvearrowright^{a} \begin{tikzpicture}[scale=0.2,baseline=0.1cm]
        \node at (0,0)  [dot,label= {[label distance=-0.2em]below: \scriptsize  $ \gamma $} ] (root) {};
         \node at (0,4)  [dot,label={[label distance=-0.2em]above: \scriptsize  $ \beta $}] (right) {};
            \draw[kernel1] (right) to
     node [sloped,below] {\small }     (root); 
     \node at (0,2) [fill=white,label={[label distance=0em]center: \scriptsize  $ b $} ] () {};
     \end{tikzpicture}   = \begin{tikzpicture}[scale=0.2,baseline=0.1cm]
        \node at (0,0)  [dot,label= {[label distance=-0.2em]below: \scriptsize  $ \gamma $} ] (root) {};
         \node at (2,4)  [dot,label={[label distance=-0.2em]above: \scriptsize  $ \beta $}] (right) {};
         \node at (-2,4)  [dot,label={[label distance=-0.2em]above: \scriptsize  $ \alpha $} ] (left) {};
            \draw[kernel1] (right) to
     node [sloped,below] {\small }     (root); \draw[kernel1] (left) to
     node [sloped,below] {\small }     (root);
     \node at (-1,2) [fill=white,label={[label distance=0em]center: \scriptsize  $ a $} ] () {};
    \node at (1,2) [fill=white,label={[label distance=0em]center: \scriptsize  $ b $} ] () {};
     \end{tikzpicture}
     + \begin{tikzpicture}[scale=0.2,baseline=0.1cm]
        \node at (0,0)  [dot,label= {[label distance=-0.2em]below: \scriptsize  $ \gamma $} ] (root) {};
         \node at (-2,3)  [dot,label={[label distance=-0.2em]left: \scriptsize  $ \beta $} ] (left) {};
          \node at (0,6)  [dot,label={[label distance=-0.2em]right: \scriptsize  $ \alpha $} ] (center) {};
          \draw[kernel1] (left) to
     node [sloped,below] {\small }     (root);
      \draw[kernel1] (center) to
     node [sloped,below] {\small }     (left);
     \node at (-1,1.5) [fill=white,label={[label distance=0em]center: \scriptsize  $ b $} ] () {};
      \node at (-1,4.5) [fill=white,label={[label distance=0em]center: \scriptsize  $ a $} ] () {};
     \end{tikzpicture}
\end{equs}
\end{example}
\begin{proposition}
For any set $E$, the $E$-multiple pre-Lie operad is given by the $\mathbf k$-span of the labellised rooted trees with $E$-decorated edges, where the labellisation is understood with respect to the vertices.
\end{proposition}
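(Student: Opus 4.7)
The plan is to adapt the classical Chapoton--Livernet theorem, which identifies the pre-Lie operad with labelled rooted trees under grafting, to the multi-pre-Lie setting indexed by $E$. First, I would verify that the $\mathbf{k}$-span of $E$-edge-decorated, vertex-labelled rooted trees, equipped with the grafting products $\curvearrowright^a$ from Example~\ref{grafting_a}, does form a multi-pre-Lie algebra. The pictorial argument already given for pre-Lieness extends verbatim: the ``defect'' $\sigma \curvearrowright^a (\tau \curvearrowright^b z) - (\sigma \curvearrowright^a \tau) \curvearrowright^b z$ counts exactly those terms in which $\sigma$ is grafted onto a vertex of $\tau$ (with the new edge labelled $a$) while $\tau$ is grafted at some vertex of $z$ (with its edge labelled $b$). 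This is manifestly symmetric in the pairs $(\sigma,a) \leftrightarrow (\tau,b)$, which is precisely \eqref{mpl}.

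Next, I would show freeness by constructing a morphism from the free multi-pre-Lie operad $\mathcal{P}_E^{\text{free}}$ to the $\mathbf{k}$-span of labelled $E$-decorated rooted trees and exhibiting its inverse. In one direction, sending each generator to the single-vertex tree with the corresponding label and each iterated product $\cdots \curvearrowright^{a_i} \cdots$ to the corresponding iterated grafting is well-defined because \eqref{mpl} is respected, as checked in the first step. In the other direction, one assigns to each labelled decorated tree $\tau$ a canonical operadic expression in $\mathcal{P}_E^{\text{free}}$: if $\tau$ has root labelled $i$ with branches $B_1,\ldots,B_k$ attached by edges decorated $a_1,\ldots,a_k$, set
\[
\tau \longmapsto (B_1 \cdots B_k) \curvearrowright^{a_1\cdots a_k} i ,
\]
defined recursively using the brace elements introduced at the beginning of Section~\ref{sect:def}. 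The invariance of the brace under simultaneous permutation of the pairs $(x_i,a_i)$, noted immediately after the brace definition, ensures that the right-hand side is independent of the arbitrary ordering chosen on the children of the root.

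The main obstacle is to prove that this tree-to-operation map is an actual inverse modulo the relations \eqref{mpl}. Concretely, one must check that any two operadic expressions yielding the same underlying labelled decorated tree represent the same element of $\mathcal{P}_E^{\text{free}}$. I would proceed by induction on the number of vertices, using \eqref{mpl} to rewrite iterated products into a normal form built entirely from the brace expression around the root, so that the underlying labelled decorated tree is a complete invariant. This is the step that genuinely uses the multi-pre-Lie relations, and it is here that the generalisation of the classical Chapoton--Livernet argument requires care: the edge decorations are carried along at every rewriting step, but since each relation \eqref{mpl} involves edges with fixed decorations $a$ and $b$, the induction goes through exactly as in the mono-coloured case. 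Once this normal form is established, the operadic structure, i.e.\ partial composition $\circ_i$, is seen to correspond to substitution of a tree at vertex $i$ followed by reattachment of the incoming edge decoration, matching the standard description of the pre-Lie operad.
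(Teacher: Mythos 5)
Your overall route---verify that edge-decorated grafting satisfies \eqref{mpl}, map the free $E$-multiple pre-Lie operad onto vertex-labelled $E$-decorated trees, and invert via the root brace plus a rewriting/normal-form argument---is exactly the adaptation of Chapoton--Livernet that the paper invokes (its proof is the one-line remark that the statement is a straightforward adaptation of \cite{ChaLiv}, with details as in \cite{F2018}), and your identification of the spanning/normal-form step as the only place where the relations \eqref{mpl} genuinely enter, with the edge decorations carried along inertly, is the right diagnosis of why the adaptation is routine.

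There is, however, one concrete error in your first step. The defect $\sigma\curvearrowright^a(\tau\curvearrowright^b z)-(\sigma\curvearrowright^a\tau)\curvearrowright^b z$ does \emph{not} consist of the terms in which $\sigma$ is grafted onto a vertex of $\tau$: those are precisely the terms cancelled by the subtrahend $(\sigma\curvearrowright^a\tau)\curvearrowright^b z$, and the set of such terms is visibly \emph{not} symmetric under the exchange $(\sigma,a)\leftrightarrow(\tau,b)$, so your ``manifestly symmetric'' justification fails as written. What actually survives is the sum over pairs of vertices $v,w\in N_z$ (possibly equal) in which $\sigma$ is grafted at $v$ by an edge decorated $a$ and $\tau$ is grafted at $w$ by an edge decorated $b$, both onto $z$; it is this expression---the configuration drawn in the paper's picture accompanying the grafting example, with the two branches hanging off the same underlying tree---that is symmetric in the two pairs and yields \eqref{mpl}. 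With this correction, the remainder of your sketch (well-definedness of the tree-to-brace map via the permutation invariance of the brace elements, the easy verification that the composite on the tree side is the identity since the brace of $B_1,\ldots,B_k$ applied to a single vertex reconstitutes the tree, and the rewriting of arbitrary monomials into root-brace normal form) goes through and coincides in substance with the cited argument.
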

\begin{proof}
The proof (see \cite{F2018}) is a straightforward adaptation of the description of the pre-Lie operad by F. Chapoton and M. Livernet \cite{ChaLiv}.
\end{proof}
\begin{corollary}\label{free}
For any pair of sets $(E,V)$, the free $E$-multiple pre-Lie algebra generated by $V$ is given by the $\mathbf k$-span of rooted trees with $E$-decorated edges and $V$-decorated vertices.
\end{corollary}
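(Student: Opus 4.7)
The plan is to deduce the corollary directly from the operadic description of the preceding proposition, which identifies the $E$-multiple pre-Lie operad with the $\mathbf{k}$-span of labellised rooted trees with $E$-decorated edges. The universal principle is that, given an operad $\mathcal{O}$, the free $\mathcal{O}$-algebra generated by a set $V$ is
\[
\mathcal{F}_{\mathcal{O}}(V) = \bigoplus_{n \geq 1} \mathcal{O}(n) \otimes_{S_n} \mathbf{k}\langle V\rangle^{\otimes n},
\]
where $S_n$ acts on $\mathcal{O}(n)$ by relabellisation and permutes the tensor factors. So first I would invoke this standard fact (cf.\ the analogous argument in \cite{ChaLiv} for the plain pre-Lie case).

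Next I would describe the $S_n$-coinvariants explicitly: a basis element of $\mathcal{O}(n)$ is a rooted tree $T$ with $n$ vertices labelled by $\{1,\dots,n\}$ and edges decorated by $E$; tensoring with $v_1\otimes\cdots\otimes v_n \in \mathbf{k}\langle V\rangle^{\otimes n}$ and taking $S_n$-coinvariants amounts to replacing each integer label $i$ on a vertex by the element $v_i\in V$ and then forgetting the remaining numerical bookkeeping. The resulting orbit under $S_n$ is in bijection with an (unlabelled) rooted tree having $E$-decorated edges and $V$-decorated vertices, which is precisely an element of $T_E^V$. Thus $\mathcal{F}_{\mathcal{O}}(V)$ is linearly isomorphic to $\mathcal{T}_E^V$, with the multi-pre-Lie products inherited from the operad coinciding with the grafting products $\curvearrowright^a$ of Example~\ref{rooted_r}.

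Finally I would confirm the universal property: given any $E$-multi-pre-Lie algebra $(\mathcal{P},(\curvearrowright^a)_{a\in E})$ and any map $\varphi\colon V \to \mathcal{P}$, there is a unique multi-pre-Lie morphism $\tilde{\varphi}\colon \mathcal{T}_E^V \to \mathcal{P}$ extending $\varphi$. Uniqueness is immediate because every decorated tree in $T_E^V$ is built by repeatedly grafting single vertices, so its image is forced once $\varphi$ is fixed; existence follows because the operadic description ensures that all relations satisfied by iterated graftings in $\mathcal{T}_E^V$ are consequences of the multi-pre-Lie axioms, hence are automatically respected in $\mathcal{P}$.

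The only genuine point requiring care is to make sure that the $S_n$-coinvariants yield exactly unlabelled trees (and not some quotient with additional identifications from tree automorphisms), i.e.\ that the stabiliser action on a labelled tree with a given underlying shape is free on the labelling set. This is indeed the case because distinct numerical labellings of the same unlabelled decorated tree give distinct basis elements of $\mathcal{O}(n)$, and the symmetric group acts simply transitively on these labellings; I expect this to be the only subtlety in the argument, the rest being a direct translation of \cite{ChaLiv}.
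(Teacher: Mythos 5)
Your overall route is the one the paper itself intends: Corollary~\ref{free} is given no proof of its own and is meant to follow from the preceding proposition exactly as you do it, via the standard identification of the free algebra over an operad $\mathcal{O}$ with $\bigoplus_{n\ge 1}\mathcal{O}(n)\otimes_{S_n}\mathbf{k}\langle V\rangle^{\otimes n}$, plus the matching of the induced products with the grafting products of Example~\ref{rooted_r}, all in the spirit of \cite{ChaLiv}. So in substance you have reconstructed the implicit argument correctly.

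There is, however, one genuine error, and it sits precisely at the point you single out as the ``only subtlety''. The symmetric group does \emph{not} act simply transitively on the numerical labellings of a fixed unlabelled decorated tree: if the tree admits a nontrivial automorphism group $A$ (for instance a root with two children carrying identical edge and vertex decorations), the labellings form a single $S_n$-orbit of size $n!/|A|$, with stabiliser conjugate to $A$ --- concretely, the transposition exchanging the labels of the two identical children fixes the labelled tree as an element of $\mathcal{O}(n)$. What rescues the argument is that freeness of the action is not needed at all: for any permutation representation $\mathbf{k}X$ of a group $G$, the coinvariants $(\mathbf{k}X)_G$ have the orbit set $X/G$ as a basis, with no hypothesis on stabilisers. (Stabilisers, i.e.\ tree automorphisms, would enter only if you passed from coinvariants to invariants, which is exactly where the symmetry factors $S(\tau)$ used later in the paper originate.) With that substitution your second paragraph is sound. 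A smaller point: in the uniqueness part of the universal property, ``built by repeatedly grafting single vertices'' is too loose, since $\sigma\curvearrowright^a\tau$ is a sum over all vertices of $\tau$ rather than a single grafting; the clean statement is that any tree with root decoration $\alpha$ and branches $\tau_1,\dots,\tau_n$ attached by edges $a_1,\dots,a_n$ is recovered as the single term $(\tau_1\cdots\tau_n)\curvearrowright^{a_1\cdots a_n}\bullet_\alpha$, because a one-vertex tree admits only one vertex to graft onto, and the brace elements are determined recursively by the binary products, so uniqueness follows by induction on the number of vertices.
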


\begin{example}\rm
We keep the notations of the previous example, but we suppose that a commutative monoid $\Omega$ acts on $V$. For any $a\in E$ and $\omega\in \Omega$, the binary product $\curvearrowright^{a,\omega}$ is defined as follows:
\begin{equation} \label{curve_insertion}
\sigma \curvearrowright^{a,\omega} \tau :=\sum_{v\in N_{\tau}}\uparrow_v^\omega(\sigma \curvearrowright^a_v \tau),
\end{equation}
where the operator $\uparrow_v^\omega$ stands for changing the decoration $\Labn_v$ of the vertex $v$ into $\omega\Labn_v$. As $\uparrow_v^\omega$ and $\sigma \curvearrowright_v^a-$ commute, we also have:
\begin{equation}
\sigma \curvearrowright^{a,\omega} \tau=\sum_{v\in N_{\tau}} \sigma \curvearrowright^a_v (\uparrow_v^\omega \tau).
\end{equation}
One can notice that the operators $\uparrow^\omega$ defined by
\begin{equation}
\uparrow^\omega \tau :=\sum_{v\in N_{\tau}}\uparrow_v^\omega \tau
\end{equation}
form a family of commuting derivations for all pre-Lie products $\curvearrowright^{a,\omega'}$. By derivation, we mean a linear map satisfying the Leibniz rule.
\end{example}

\begin{proposition}\label{e-omega-mpl}
The vector space $\mathcal T^V_E$ endowed with the binary products $\curvearrowright^{a,\omega}$ is a $E\times\Omega$-multiple pre-Lie algebra.
\end{proposition}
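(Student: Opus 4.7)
The plan is to verify the multi-pre-Lie identity \eqref{mpl}
\begin{equation*}
x\curvearrowright^{a,\omega}(y\curvearrowright^{b,\omega'} z)-(x\curvearrowright^{a,\omega} y)\curvearrowright^{b,\omega'} z = y\curvearrowright^{b,\omega'}(x\curvearrowright^{a,\omega} z)-(y\curvearrowright^{b,\omega'} x)\curvearrowright^{a,\omega} z,
\end{equation*}
for arbitrary decorated trees $x,y,z$ and pairs $(a,\omega),(b,\omega')\in E\times\Omega$, by reducing it to the multi-pre-Lie identity already established for the family $(\curvearrowright^a)_{a\in E}$. The proof will exploit two elementary facts: (i) the local label operators $\uparrow^\omega_v$ pairwise commute, with $\uparrow^\omega_v\uparrow^{\omega'}_v=\uparrow^{\omega\omega'}_v$ by commutativity of $\Omega$; (ii) $\uparrow^\omega_v$ commutes with any local grafting $\sigma\curvearrowright^a_w\,-$, since the label at $v$ and the edge attached at $w$ act on disjoint parts of the decoration.

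Starting from $\sigma\curvearrowright^{a,\omega}\tau=\sum_{v\in N_\tau}\uparrow^\omega_v(\sigma\curvearrowright^a_v\tau)$ and using (i)--(ii), I would expand
\begin{equation*}
x\curvearrowright^{a,\omega}(y\curvearrowright^{b,\omega'}z)=\sum_{v\in N_z}\sum_{w\in N_y\sqcup N_z}\uparrow^\omega_w\uparrow^{\omega'}_v\bigl[x\curvearrowright^a_w(y\curvearrowright^b_v z)\bigr],
\end{equation*}
where $N_y\sqcup N_z$ is the node set of $y\curvearrowright^b_v z$, and then split the inner sum according to the position of $w$ into three regions: $w\in N_y$, $w\in N_z\setminus\{v\}$, and $w=v$.

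In the first region the two graftings affect disjoint parts of the tree, so $x\curvearrowright^a_w(y\curvearrowright^b_v z)=(x\curvearrowright^a_w y)\curvearrowright^b_v z$; summing over $(w,v)\in N_y\times N_z$ and pulling the labels through then reproduces exactly $(x\curvearrowright^{a,\omega} y)\curvearrowright^{b,\omega'}z$, which cancels the corresponding subtracted term on the left-hand side. In the second region the graftings occur at distinct vertices of $z$ and commute, yielding $x\curvearrowright^a_w(y\curvearrowright^b_v z)=y\curvearrowright^b_v(x\curvearrowright^a_w z)$; together with the commuting labels this contribution is manifestly invariant under the involution $(x,a,\omega,w)\leftrightarrow(y,b,\omega',v)$. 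The diagonal case $w=v$ produces terms of the form $\uparrow^{\omega\omega'}_v\bigl[x\curvearrowright^a_v(y\curvearrowright^b_v z)\bigr]$, symmetric under the same involution thanks to $\omega\omega'=\omega'\omega$ and to the fact that attaching two subtrees at a common vertex does not depend on the order.

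Assembling these three cases shows that, after cancellation of $(x\curvearrowright^{a,\omega}y)\curvearrowright^{b,\omega'}z$, the left-hand side becomes invariant under the swap $(x,a,\omega)\leftrightarrow(y,b,\omega')$, which is precisely the identity to prove. The main point of care will be the combinatorial bookkeeping of the three regions, in particular the tree identity $x\curvearrowright^a_w(y\curvearrowright^b_v z)=(x\curvearrowright^a_w y)\curvearrowright^b_v z$ for $w\in N_y$ that already underlies the classical proof that grafting is pre-Lie; once it is settled, the commutativity of $\Omega$ together with the locality of $\uparrow^\omega$ guarantee that the Taylor-type decoration introduces no further obstruction.
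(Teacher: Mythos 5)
Your proof is correct and rests on the same mechanism as the paper's: after cancelling the ``disjoint'' grafting-on-$y$ terms, the remaining defect is manifestly symmetric in $(x,a,\omega)\leftrightarrow(y,b,\omega')$ thanks to the commutativity of $\Omega$ (via $\uparrow^\omega_v\uparrow^{\omega'}_v=\uparrow^{\omega\omega'}_v$ on the diagonal) and the locality of the label operators. The only difference is one of completeness: the paper verifies the identity explicitly only for single-vertex trees and declares the general case straightforward, whereas your three-region decomposition ($w\in N_y$, $w\in N_z\setminus\{v\}$, $w=v$) is precisely that ``straightforward'' general case carried out in full.
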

\begin{proof}
In the particular case when the three trees are made of a single vertex, Equation \eqref{mpl} comes from the fact that
\begin{equs}
\bullet_\alpha\curvearrowright^{a,\omega}(\bullet_\beta\curvearrowright^{b,\omega'}\bullet_\gamma)-
(\bullet_\alpha\curvearrowright^{a,\omega}\bullet_\beta)\curvearrowright^{b,\omega'}\bullet_\gamma = \begin{tikzpicture}[scale=0.2,baseline=0.1cm]
        \node at (0,0)  [dot,label= {[label distance=-0.2em]below: \scriptsize  $ \omega \omega'\gamma $} ] (root) {};
         \node at (2,6)  [dot,label={[label distance=-0.2em]above: \scriptsize  $ \beta $}] (right) {};
         \node at (-2,6)  [dot,label={[label distance=-0.2em]above: \scriptsize  $ \alpha $} ] (left) {};
            \draw[kernel1] (right) to
     node [sloped,below] {\small }     (root); \draw[kernel1] (left) to
     node [sloped,below] {\small }     (root);
     \node at (-1,3) [fill=white,label={[label distance=0em]center: \scriptsize  $ a $} ] () {};
    \node at (1,3) [fill=white,label={[label distance=0em]center: \scriptsize  $ b $} ] () {};
     \end{tikzpicture}
\end{equs}
is manifestly symmetric in $(a,\omega)$ and $(b,\omega')$, due to the commutativity of the monoid $\Omega$. The general case is straightforward and left to the reader.
\end{proof}

We abbreviate $\curvearrowright^{a,1}$ with $\curvearrowright^{a}$ where $1$ is the unit of $\Omega$.
\begin{proposition}\label{free-e-omega}
Suppose that $V=S \times\Omega$, and that the action of $\Omega$ on $V$ is given by $\omega.(s,\omega'):=(s,\omega\omega')$. Then $\Big(\mathcal T_E^V,(\curvearrowright^{a})_{a\in E},(\uparrow^\omega)_{\omega\in \Omega}\Big)$ is the free $S$-generated $E$-multiple pre-Lie algebra endowed with an $\Omega$-indexed family of commuting derivations.
\end{proposition}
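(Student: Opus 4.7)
The plan is to reduce the statement to Corollary~\ref{free}, which already provides the freeness of $\mathcal T_E^V$ as an $E$-multiple pre-Lie algebra on the decoration set $V = S\times\Omega$. Given a target $(\mathcal P, \curvearrowright^a_{\mathcal P})$ carrying an $\Omega$-indexed family of commuting derivations $(D_\omega)_{\omega\in\Omega}$ and a set map $f\colon S \to \mathcal P$, I would define $g\colon V \to \mathcal P$ by $g(s,\omega) := D_\omega f(s)$ and invoke Corollary~\ref{free} to extend $g$ uniquely to a multi-pre-Lie morphism $\tilde f\colon \mathcal T_E^V \to \mathcal P$.

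Uniqueness is immediate: any morphism $\tilde f$ that commutes with the $\uparrow^\omega$ and satisfies $\tilde f(\bullet_{(s,1)}) = f(s)$ must take the value $\tilde f(\bullet_{(s,\omega)}) = \tilde f(\uparrow^\omega \bullet_{(s,1)}) = D_\omega f(s)$ on every single vertex, and is therefore fully determined on all of $\mathcal T_E^V$ by Corollary~\ref{free}. For existence, one must verify that the $\tilde f$ constructed above actually commutes with the derivations, i.e.\ $\tilde f \circ \uparrow^\omega = D_\omega \circ \tilde f$. I would do this by induction on the number of vertices of $\tau$. The base case $\tau = \bullet_{(s,\omega')}$ reads $\tilde f(\uparrow^\omega \bullet_{(s,\omega')}) = \tilde f(\bullet_{(s,\omega\omega')}) = D_{\omega\omega'} f(s) = D_\omega D_{\omega'} f(s)$, where the last identity expresses the compatibility of the $(D_\omega)$ with the monoid product on $\Omega$. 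For the inductive step, decompose $\tau = \sigma \curvearrowright^a_v \rho$ with $\sigma,\rho$ strictly smaller than $\tau$; expand $\uparrow^\omega(\sigma\curvearrowright^a\rho) = (\uparrow^\omega \sigma)\curvearrowright^a\rho + \sigma\curvearrowright^a(\uparrow^\omega \rho)$ using that each $\uparrow^\omega$ is a derivation for every $\curvearrowright^a$; apply the pre-Lie morphism property of $\tilde f$; invoke the inductive hypothesis; and recombine using that $D_\omega$ is a derivation for $\curvearrowright^a_{\mathcal P}$.

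The main obstacle is the bookkeeping needed to cleanly separate the two roles played by the decorations in $V = S\times\Omega$: the $S$-component carries genuine generator data (matched to $f$), while the $\Omega$-component records how the derivations act on generators. Once this separation is made through the definition $g(s,\omega) = D_\omega f(s)$, the argument is essentially a routine verification, the key structural input being the simultaneous compatibility (established in Proposition~\ref{e-omega-mpl}) of the family $(\uparrow^\omega)_{\omega\in\Omega}$ with all the grafting products $(\curvearrowright^a)_{a\in E}$.
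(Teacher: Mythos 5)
Your proposal is correct and takes essentially the same route as the paper: you define the forced values $g(s,\omega)=D_\omega f(s)$ on one-vertex trees, extend uniquely via Corollary~\ref{free}, and verify $\tilde f\circ \uparrow^\omega = D_\omega\circ\tilde f$ by induction on the number of vertices using the derivation property of $\uparrow^\omega$, the multi-pre-Lie morphism property, and the derivation property of $D_\omega$ --- exactly the paper's $\chi_\alpha$ argument. The only cosmetic differences are that you make explicit the generator-level compatibility $D_{\omega\omega'}f(s)=D_\omega D_{\omega'}f(s)$ which the paper's ``obvious'' base case uses silently, and that in the inductive step the decomposition should be a linear combination of full graftings $\tau=\sum_j \tau_1^j\curvearrowright^{a_j}\tau_2^j$ (as in the paper) rather than a single vertex-grafting $\sigma\curvearrowright^a_v\rho$, since $\tilde f$ is a morphism for $\curvearrowright^a$, not for the vertex-wise operation.
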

\begin{proof}
Let $\Big(A,(\rhd^a)_{a\in E}\Big)$ be an $E$-multiple pre-Lie algebra endowed with an $\Omega$-indexed family of commuting derivations $(D^\omega)_{\omega\in\Omega}$. Fix a collection $(\alpha_s)_{s\in S}$ of elements of $A$. By Corollary \ref{free}, there is a unique morphism $\Phi_\alpha$ of $E$-multiple pre-Lie algebras from $\mathcal T_E^S$ to $A$ such that $\Phi_\alpha(\bullet_s)=\alpha_s$ for any $s\in S$. We want to extend $\Phi_\alpha$ to a morphism
$$\Psi_\alpha: \Big(\mathcal T_E^V,(\curvearrowright^{a})_{a\in E},(\uparrow^\omega)_{\omega\in \Omega}\Big)
\longrightarrow \Big(A,(\rhd^a)_{a\in E},(D^\omega)_{\omega\in \Omega}\Big).$$
From $\Psi_{\alpha}\circ\uparrow^\omega=D^\omega\circ \Psi_\alpha$
we immediately get:
$$\Psi_\alpha(\bullet_{s,\omega})=D^\omega(\alpha_s).$$
By Corollary \ref{free} again, there is a unique morphism $\chi_\alpha$ of $E$-multiple pre-Lie algebras from $\mathcal T_E^{V}$ to $A$ such that $\chi_\alpha(\bullet_{s,\omega})=D^\omega(\alpha_s)$ for any $\omega\in\Omega$ and $s\in S$. To prove that $\Psi_\alpha$ and $\chi_\alpha$ coincide, it only remains to prove $\chi_\alpha\circ\uparrow^\omega(\tau)=D^\omega\circ\chi_\alpha(\tau)$ for any $\omega\in\Omega$ and for any $\tau\in T_E^V$.\\

We proceed by induction on the number $|\tau|$ of vertices of $\tau$, the case $|\tau|=1$ being obvious. Any $\tau$ with at least two vertices can be written as
$$\tau=\sum_{j=1}^k \tau_1^j\curvearrowright^{a_j}\tau_2^j,$$
where the components $\tau_1^j$ and $\tau_2^j$ are strictly smaller with respect to the number of vertices. We can then compute:
\begin{eqnarray*}
\chi_\alpha\circ \uparrow^\omega(\tau)&=&\chi_\alpha\circ \uparrow^\omega\left(\sum_{j=1}^k\tau_1^j\curvearrowright^{a_j}\tau_2^j\right)\\
&=&\chi_\alpha\left(\sum_{j=1}^k\uparrow^\omega\tau_1^j\curvearrowright^{a_j}\tau_2^j+\tau_1^j\curvearrowright^{a_j}\uparrow^{\omega}\tau_2^j\right)\\
&=&\sum_{j=1}^k D^\omega\chi_\alpha(\tau_1^j)\rhd^{a_j}\chi_\alpha(\tau_2^j)+\chi_\alpha(\tau_1^j)\rhd^{a_j}D^\omega\chi_\alpha(\tau_2^j)\\
&=&D^\omega\left(\sum_{j=1}^k \chi_\alpha(\tau_1^j)\rhd^{a_j}\chi_\alpha(\tau_2^j)+\chi_\alpha(\tau_1^j)\rhd^{a_j}\chi_\alpha(\tau_2^j)\right)\\
&=&D^\omega\chi_\alpha(\tau).
\end{eqnarray*}
\end{proof}
\subsection{Taylor deformation  of free multi-pre-Lie algebras}
We suppose here that $\Omega$ is the monoid $\N^{d+1}$ endowed with componentwise addition.  A grading is given by 
\begin{equs}
|\mathbf n|_{\s}:=s_1n_1+\cdots +s_{d+1}n_{d+1}
\end{equs}
 where $\mathfrak s:=(s_1,\ldots, s_{d+1})\in \N_{>0}^{d+1}$ is fixed.   We suppose that $V = S \times \N^{d+1}$ and $E = S' \times \N^{d+1}$  where $ S $ and $ S' $ are two finite sets. Then $\Omega$ acts freely on both $E$ and $V$ in a graded way. We denote by $+$ the addition in $\Omega$ as well as both actions of $\Omega$ on $E$ and $V$. A family of deformed grafting products on $\mathcal T_E^V$ is defined as follows:
\begin{equation} \label{deformation_preLie}
\sigma \widehat{\curvearrowright}^a \tau:=\sum_{v\in N_{\tau}}\sum_{\ell\in\N^{d+1}}{\Labn_v \choose \ell} \sigma  \curvearrowright_v^{a-\ell}(\uparrow_v^{-\ell} \tau).
\end{equation}
\label{deformed_grafting_a}
Here $ \Labn_v \in\N^{d+1}$ denotes the second component of the decoration at the vertex $ v $. The generic term is self-explanatory if there exists a (unique) pair $(b,\alpha)\in E\times V$ such that $a=\ell+b$ and $\Labn_v =\ell+\alpha$. It vanishes by convention if this condition is not satisfied. We define the \textsl{grading} of a tree in $\mathcal T_E^V$  by the sum of the gradings of its edges given by $ |\cdot|_{\text{grad}} $:
\begin{equation} \label{grading}
|\tau|_{\text{grad}}:=\sum_{e\in E_{\tau}}\big|\Labe(e) \big|_{\s}
\end{equation}
where $ E_{\tau} $ are the edges of $ \tau $ and $ \Labe(e) $ is the decoration of the edge $ e $.
Then, $ \widehat{\curvearrowright}^a $ is a deformation of $ \curvearrowright^a $ in the sense that:
\begin{equs}
\sigma \widehat{\curvearrowright}^a \tau = \sigma \curvearrowright^a \tau +
\hbox{ lower grading terms}.
\end{equs}
We repeat the example in \eqref{ex_grafting} but now with the deformation:
\begin{equs} \label{ex_grafting_deformed_d}
\bullet_\alpha \, \wh \curvearrowright^{a} \begin{tikzpicture}[scale=0.2,baseline=0.1cm]
        \node at (0,0)  [dot,label= {[label distance=-0.2em]below: \scriptsize  $ \gamma $} ] (root) {};
         \node at (0,4)  [dot,label={[label distance=-0.2em]above: \scriptsize  $ \beta $}] (right) {};
            \draw[kernel1] (right) to
     node [sloped,below] {\small }     (root); 
     \node at (0,2) [fill=white,label={[label distance=0em]center: \scriptsize  $ b $} ] () {};
     \end{tikzpicture} &  = \begin{tikzpicture}[scale=0.2,baseline=0.1cm]
        \node at (0,0)  [dot,label= {[label distance=-0.2em]below: \scriptsize  $ \gamma $} ] (root) {};
         \node at (2,4)  [dot,label={[label distance=-0.2em]above: \scriptsize  $ \beta $}] (right) {};
         \node at (-2,4)  [dot,label={[label distance=-0.2em]above: \scriptsize  $ \alpha $} ] (left) {};
            \draw[kernel1] (right) to
     node [sloped,below] {\small }     (root); \draw[kernel1] (left) to
     node [sloped,below] {\small }     (root);
     \node at (-1,2) [fill=white,label={[label distance=0em]center: \scriptsize  $ a $} ] () {};
    \node at (1,2) [fill=white,label={[label distance=0em]center: \scriptsize  $ b $} ] () {};
     \end{tikzpicture}
     + \begin{tikzpicture}[scale=0.2,baseline=0.1cm]
        \node at (0,0)  [dot,label= {[label distance=-0.2em]below: \scriptsize  $ \gamma $} ] (root) {};
         \node at (-2,3)  [dot,label={[label distance=-0.2em]left: \scriptsize  $ \beta $} ] (left) {};
          \node at (0,6)  [dot,label={[label distance=-0.2em]right: \scriptsize  $ \alpha $} ] (center) {};
          \draw[kernel1] (left) to
     node [sloped,below] {\small }     (root);
      \draw[kernel1] (center) to
     node [sloped,below] {\small }     (left);
     \node at (-1,1.5) [fill=white,label={[label distance=0em]center: \scriptsize  $ b $} ] () {};
      \node at (-1,4.5) [fill=white,label={[label distance=0em]center: \scriptsize  $ a $} ] () {};
     \end{tikzpicture} \\ & + \sum_{\ell \neq 0} {\gamma \choose \ell}
     \begin{tikzpicture}[scale=0.2,baseline=0.1cm]
        \node at (0,0)  [dot,label= {[label distance=-0.2em]below: \scriptsize  $ \gamma -\ell $} ] (root) {};
         \node at (2,4)  [dot,label={[label distance=-0.2em]above: \scriptsize  $ \beta $}] (right) {};
         \node at (-2,4)  [dot,label={[label distance=-0.2em]above: \scriptsize  $ \alpha $} ] (left) {};
            \draw[kernel1] (right) to
     node [sloped,below] {\small }     (root); \draw[kernel1] (left) to
     node [sloped,below] {\small }     (root);
     \node at (-1,2) [fill=white,label={[label distance=-1em]left: \scriptsize  $ a-\ell $} ] () {};
    \node at (1,2) [fill=white,label={[label distance=0em]center: \scriptsize  $ b $} ] () {};
     \end{tikzpicture}
     +  \sum_{\ell \neq 0} {\beta \choose \ell}  \begin{tikzpicture}[scale=0.2,baseline=0.1cm]
        \node at (0,0)  [dot,label= {[label distance=-0.2em]below: \scriptsize  $ \gamma $} ] (root) {};
         \node at (-2,3)  [dot,label={[label distance=-0.2em]left: \scriptsize  $ \beta - \ell $} ] (left) {};
          \node at (0,6)  [dot,label={[label distance=-0.2em]right: \scriptsize  $ \alpha $} ] (center) {};
          \draw[kernel1] (left) to
     node [sloped,below] {\small }     (root);
      \draw[kernel1] (center) to
     node [sloped,below] {\small }     (left);
     \node at (-1,1.5) [fill=white,label={[label distance=0em]center: \scriptsize  $ b $} ] () {};
      \node at (-1,4.5) [fill=white,label={[label distance=0em]center: \scriptsize  $ a -\ell $} ] () {};
     \end{tikzpicture}
\end{equs}
The second line of \eqref{ex_grafting_deformed_d} contains the lower grading terms.

\begin{remark}
The deformation \eqref{deformation_preLie} has been first introduced for singular SPDEs in \cite{BCCH}. The leading term corresponds to the higher order of a Taylor jet. Indeed, if we consider $ \tau $ to be $ \bullet_k $, the tree composed of a single node decorated by $ k $, and $ \sigma $ to be $ \bullet $, we get:
\begin{equs} \label{sum_Taylor}
\bullet \, \widehat{\curvearrowright}^a \, \bullet_k = \sum_{\ell\in\N^{d+1}}{k \choose \ell} \bullet \curvearrowright^{a-\ell} \bullet_{k-\ell}
\end{equs}
We interpret the tree $ \bullet_k $ as a  monomial of degree $ k $ and the edge $ \curvearrowright^a $ is associated to the derivatives of some smooth function $ f : \R^{d+1} \rightarrow \R$.  Indeed, if we multiply a decorated tree by the product of the factors $ \frac{1}{\Labn_v !} $ where $ v $ is any vertex, one has
\begin{equs} \label{sum_Taylor_2}
\bullet \, \widehat{\curvearrowright}^a \, \frac{1}{k!}\bullet_k = \sum_{\ell\in\N^{d+1}} \frac{1}{\ell!} \bullet \curvearrowright^{a-\ell} \frac{1}{(k-\ell)!}\bullet_{k-\ell}.
\end{equs}
 We suppose that $ a \geq k $ and we can rewrite \eqref{sum_Taylor_2} as a Taylor jet of $ f^{(a-k)} $ when the bound is given by $ k $:
\begin{equs}
 \sum_{\ell \leq k}\frac{x ^{k-\ell}}{(k-\ell)!}f^{(a-\ell)}(0),
\end{equs}
where we have made the following identifications:
\begin{equs}
\frac{1}{(k-\ell)!}\bullet_{k-\ell} \equiv \frac{x^{k-\ell}}{(k-\ell)!}, \quad f^{(a-\ell)}(0) \equiv \frac{1}{\ell!} \bullet \curvearrowright^{a-\ell} .
\end{equs}

\end{remark}


\label{theta_iso}
\begin{theorem}\label{main}
There exists a unique linear isomorphism $\Theta:\mathcal T_E^V\to\mathcal T_E^V$ such that
$$\Theta(\bullet_\alpha)=\bullet_\alpha \, \hbox{ for any }\alpha\in V$$
and
\begin{equs} \label{deformation_curve}
\Theta(\sigma {\curvearrowright}^a \tau)=\Theta(\sigma)\,\widehat{\curvearrowright}^a \, \Theta(\tau)
\end{equs}
for any $\sigma,\tau\in \mathcal T_E^V$ and for any $a\in E$.
\end{theorem}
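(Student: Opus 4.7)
The plan is to build $\Theta$ as the unique morphism of $E$-multi-pre-Lie algebras from $(\mathcal T_E^V, (\curvearrowright^a)_{a\in E})$ to $(\mathcal T_E^V, (\widehat{\curvearrowright}^a)_{a\in E})$ sending each generator $\bullet_\alpha$ to itself. Corollary~\ref{free} delivers such a morphism automatically \emph{as soon as} the target is known to be a multi-pre-Lie algebra, and the relation \eqref{deformation_curve} is then just the morphism condition. The proof therefore splits into three steps: first, verify that the family $(\widehat{\curvearrowright}^a)_{a\in E}$ satisfies the multi-pre-Lie identity \eqref{mpl}; second, apply the universal property to get existence and uniqueness of $\Theta$; third, prove bijectivity via a triangularity argument with respect to the grading.

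The main obstacle is the first step. Fixing $\sigma, \tau, \rho \in \mathcal T_E^V$ and $a, b \in E$, I would expand all four terms of \eqref{mpl} using the definition \eqref{deformation_preLie} and sort each contribution to $\sigma \widehat{\curvearrowright}^a(\tau \widehat{\curvearrowright}^b \rho)$ by where $\sigma$ is grafted: case (A), onto some vertex $w$ of $\tau$ (itself grafted onto a vertex $v$ of $\rho$); case (B), onto a vertex $v' \in N_\rho$ distinct from the insertion point $v$ of $\tau$; or case (C), onto the very vertex $v \in N_\rho$ carrying $\tau$. Terms of type (A) appear identically in $(\sigma \widehat{\curvearrowright}^a \tau) \widehat{\curvearrowright}^b \rho$, because the two graftings act on disjoint vertices and the weights multiply to the common factor $\binom{\Labn_v}{m}\binom{\Labn_w}{\ell}$; hence they cancel in the difference on the left-hand side of \eqref{mpl}. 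Terms of type (B) appear identically in $\tau \widehat{\curvearrowright}^b(\sigma \widehat{\curvearrowright}^a \rho)$ by the same disjoint-support argument, so they cancel between the two sides of \eqref{mpl}. For type (C), the left-hand side of \eqref{mpl} produces the weight $\binom{\Labn_v}{m}\binom{\Labn_v - m}{\ell}$ (grafting $\tau$ first shifts the decoration of $v$ from $\Labn_v$ to $\Labn_v - m$ before $\sigma$ is attached), while the right-hand side produces $\binom{\Labn_v}{\ell}\binom{\Labn_v - \ell}{m}$. Both are equal to the multinomial $\Labn_v!/\bigl(\ell!\, m!\, (\Labn_v - \ell - m)!\bigr)$ componentwise, and the underlying trees coincide; the type (C) contributions also cancel, proving \eqref{mpl}. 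Notice that no Chu--Vandermonde identity is needed here, only the symmetric factorisation of binomial coefficients.

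With $(\mathcal T_E^V, \widehat{\curvearrowright}^a)$ thus identified as an $E$-multi-pre-Lie algebra, Corollary~\ref{free} applied to this target (with $S = V$ and generators $\bullet_\alpha$) immediately yields a unique multi-pre-Lie morphism $\Theta$ satisfying $\Theta(\bullet_\alpha) = \bullet_\alpha$ together with \eqref{deformation_curve}. To conclude bijectivity, a direct inspection of \eqref{deformation_preLie} gives
\[
\sigma \, \widehat{\curvearrowright}^a \, \tau \;=\; \sigma \curvearrowright^a \tau \;+\; (\text{trees of strictly smaller grading}),
\]
since any summand with $\ell \neq 0$ replaces the edge decoration $a$ by $a - \ell$ and strictly decreases $|\cdot|_{\text{grad}}$ by $|\ell|_{\s} > 0$. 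An induction on the number of vertices of $\tau$, using that every tree with at least two vertices is (a linear combination of) pre-Lie products of strictly smaller trees, then upgrades this to $\Theta(\tau) = \tau + (\text{trees of strictly smaller grading})$ for every $\tau \in T_E^V$. Filtering $\mathcal T_E^V$ by the grading $|\cdot|_{\text{grad}}$, $\Theta$ is unitriangular on each graded component, hence a linear isomorphism of $\mathcal T_E^V$, which finishes the proof.
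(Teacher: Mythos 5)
Your proof is correct, but it reverses the paper's logical architecture, so it is worth spelling out the difference. The paper never verifies the multi-pre-Lie identity \eqref{mpl} for the deformed products directly: instead it lifts everything to \emph{planar} trees, identifies $\big(\mathcal{PT}_E^V,(\ccurvearrowright^a)_{a\in E}\big)$ as the free algebra for a family of \emph{magmatic} products (via Knuth's rotation correspondence and the left Butcher products, following \cite{EM14,A14}), builds $\overline\Theta$ by the magmatic universal property --- which requires no compatibility identity whatsoever on $\wh{\ccurvearrowright}^a$ --- proves invertibility by triangularity, and then checks that $\overline\Theta(\sigma)$ only changes decorations, not tree shapes, so that it descends along the projection $\pi$ to $\mathcal T_E^V$; the multi-pre-Lie property of $\wh\curvearrowright^a$ is then harvested as a \emph{corollary} of Theorem~\ref{main} by transport of structure. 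You do the opposite: you check \eqref{mpl} for $(\wh\curvearrowright^a)_{a\in E}$ by hand and then invoke Corollary~\ref{free} on the non-planar trees directly. Your verification is sound: the case split (A)/(B)/(C) is exhaustive (the subtracted terms $(\sigma\,\wh\curvearrowright^a\,\tau)\,\wh\curvearrowright^b\,\rho$ and $(\tau\,\wh\curvearrowright^b\,\sigma)\,\wh\curvearrowright^a\,\rho$ contribute exactly the type-(A) terms on each side), the disjoint-support cancellations are genuine because the decoration shifts $\uparrow_v^{-\ell}$ and $\uparrow_w^{-m}$ act on distinct vertices, and the type-(C) identity $\binom{\Labn_v}{m}\binom{\Labn_v-m}{\ell}=\binom{\Labn_v}{\ell}\binom{\Labn_v-\ell}{m}$ holds componentwise (both sides being the multinomial $\Labn_v!/(\ell!\,m!\,(\Labn_v-\ell-m)!)$, with the correct mutual vanishing when $\ell+m\not\le\Labn_v$); your remark that Chu--Vandermonde is not needed here is accurate --- it only enters for $\wh\rhd$, where the root decoration of $\sigma$ merges with $\Labn_v$. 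What each route buys: yours is shorter, stays non-planar throughout, needs no rotation correspondence, and renders the paper's subsequent corollary on the pre-Lie property of $\wh\curvearrowright^a$ immediate rather than derived; the paper's route trades your combinatorial check for a relations-free universal property and yields the shape-preservation statement needed for the descent through $\pi$, a step you bypass entirely. Two small points of hygiene: your phrase ``unitriangular on each graded component'' should be ``identity plus an operator strictly lowering the $\N$-valued grading $|\cdot|_{\text{grad}}$'' (since $\Theta$ is filtered, not graded, and invertibility comes from the locally terminating Neumann series $\sum_{i\ge 0}(-N)^i$), and your induction uses that every tree with at least two vertices lies in the span of graftings of strictly smaller trees --- legitimate, as it is the generation half of Corollary~\ref{free} and is used verbatim in the paper's proof of Proposition~\ref{free-e-omega}, but worth stating explicitly.
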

\begin{proof}
We introduce the set $PT_E^V$ of \textsl{planar} rooted trees with edges decorated by $E$ and vertices decorated by $V$, and $\mathcal {PT}_E^V$ will stand for the linear span of $PT_E^V$. A collection of \textsl{left graftings} is defined by
\begin{equation}
\sigma \ccurvearrowright^a \tau:=\sum_{v\in N_{\tau}} \sigma \ccurvearrowright^a_v  \tau
\end{equation}
for any $\sigma,\tau\in PT_E^V$, where $\sigma  \ccurvearrowright^a_v  \tau$ is obtained by grafting $\sigma$ on $\tau$ at vertex $v$ \textsl{on the left of the other branches}, by means of a new edge decorated by $a$.
\begin{lemma}
$\big(\mathcal{PT}_E^V,(\ccurvearrowright^a)_{a\in E}\big)$ is the free algebra endowed with a family of magmatic products indexed by $E$, generated by $V$.
\end{lemma}

\begin{proof}
The free magma endowed with a family of magmatic products indexed by $E$ generated by $V$ is obtained as the set of planar binary trees with leaves decorated by $V$ and internal vertices decorated by $E$. The magmatic product $\vee_a$ of two such trees $S$ and $T$, with $a\in E$, is obtained by plugging $S$ and $T$ on the left (resp. right) branch of the $Y$-shaped tree with a unique internal node decorated by $a$.\\

D. Knuth's rotation correspondence maps bijectively this set of planar binary trees onto $PT_E^V$. It transforms leaves into vertices and internal nodes into edges. The magmatic product $\vee_a$ is transformed into the \textsl{left Butcher product indexed by $a$} defined by
\begin{equation}
\sigma\lbutcher^a \tau:=\sigma\ccurvearrowright^a_{\varrho_{\tau}}  \tau
\end{equation}
where $\varrho_{\tau}$ is the root of $\tau$. By freeness property, there is a unique linear map $\Psi:\mathcal{PT}_E^V\to\mathcal{PT}_E^V$ such that $\Psi(\bullet_\alpha)=\bullet_\alpha$ for any $\alpha\in V$, and such that
\begin{equation}
\Psi(\sigma\lbutcher^a\tau)=\Psi(\sigma)\ccurvearrowright^a\Psi(\tau)
\end{equation}
for any $\sigma,\tau\in PT_E^V$. It is obvious that 
$$\sigma\ccurvearrowright^a\tau=\sigma\lbutcher^a\tau+\hbox{higher potential energy terms},$$
 where the potential energy of a tree is the sum of the heights of its vertices. From this we can infer, by induction on the number of vertices, that
$$\Psi(\sigma)=\sigma+ \hbox{ higher potential energy terms }$$
for any $\sigma\in PT_E^V$. This in turns implies that $\Psi$ is a linear isomorphism, the matrix of which is triangular with $1$'s on the diagonal in a suitable basis. Details in the particular case $E=\{*\}$ can be found in \cite{EM14} and in \cite{A14}, adaptation to any finite set $E$ is straightforward.
\end{proof}

Let us now finish up the proof of Theorem \ref{main}: by universal property of the free magmatic algebra, there exists a unique linear map 
$$\overline\Theta:\mathcal{PT}_E^V\to\mathcal{PT}_E^V$$
such that $\overline\Theta(\bullet_\alpha)=\bullet_\alpha$ for any $\alpha\in V$, and such that
\begin{equation}\label{thetabar}
\overline\Theta(\sigma{\ccurvearrowright}^a \tau)=\overline\Theta(\sigma)\wh{\ccurvearrowright}^a \overline\Theta(\tau)
\end{equation}
with
\begin{equation}\label{deform-plan}
\sigma\wh{\ccurvearrowright}^a \tau:=\sum_{v \in N_{\tau}}\sum_{\ell\in\N^{d+1}}{\Labn_v \choose \ell}\sigma \ccurvearrowright_v^{a-\ell}(\uuparrow_v^{-\ell} \tau)
\end{equation}
for any $a\in E$. Here $\uuparrow_v^{-\ell}$ is the obvious analogue of $\uparrow_v^{-\ell}$ in the planar setting. From \eqref{thetabar} and \eqref{deform-plan} one easily shows, by induction on the number of vertices, that
$$\overline\Theta(\sigma)=\sigma+\hbox{ lower grading terms}$$
for any $\sigma\in PT_E^V$, hence $\overline\Theta$ is a linear isomorphism. Moreover, one can also easily see, by induction on the number of vertices, that $\overline\Theta(\sigma)$ is a sum of planar trees all of which coincide with $\sigma$ modulo a change of decoration. It therefore gives rise to a linear isomorphism $\Theta: \mathcal T_E^V\to \mathcal T_E^V$ such that the following diagram commutes (where $\pi$ stands for the canonical projection which forgets the planarity):
\diagramme{
\xymatrix{
\mathcal {PT}_E^V\ar[rr]^{\overline\Theta}\ar@{>>}[d]_\pi && \mathcal {PT}_E^V\ar@{>>}[d]^\pi\\
\mathcal T_E^V\ar[rr]^\Theta && \mathcal T_E^V
}
}
\noindent The map $\Theta$ defined above fulfills the requirements of Theorem \ref{main}.
\end{proof}

\begin{corollary}
The algebra $\big(\mathcal {T}_E^V,(\wh\curvearrowright^a)_{a\in E}\big)$ is $E$-multiple pre-Lie.
\end{corollary}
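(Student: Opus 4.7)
The plan is to derive the corollary immediately from Theorem \ref{main}, by transporting the already-established multi-pre-Lie structure along the isomorphism $\Theta$. Recall from Example \ref{rooted_r} and Proposition \ref{e-omega-mpl} that the undeformed family $(\curvearrowright^a)_{a\in E}$ makes $\mathcal T_E^V$ an $E$-multi-pre-Lie algebra, i.e.\ for any $a,b\in E$ and $x',y',z'\in \mathcal T_E^V$,
\begin{equation}\label{mpl-undef}
x'\curvearrowright^a(y'\curvearrowright^b z')-(x'\curvearrowright^a y')\curvearrowright^b z'
= y'\curvearrowright^b(x'\curvearrowright^a z')-(y'\curvearrowright^b x')\curvearrowright^a z'.
\end{equation}

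Given arbitrary $x,y,z\in\mathcal T_E^V$ and $a,b\in E$, I would set $x':=\Theta^{-1}(x)$, $y':=\Theta^{-1}(y)$, $z':=\Theta^{-1}(z)$ (which exists since $\Theta$ is a linear isomorphism by Theorem \ref{main}), apply $\Theta$ to both sides of \eqref{mpl-undef}, and use the intertwining identity \eqref{deformation_curve} iteratively. Concretely, $\Theta\bigl(x'\curvearrowright^a(y'\curvearrowright^b z')\bigr)=x\,\wh\curvearrowright^a\,\Theta(y'\curvearrowright^b z')=x\,\wh\curvearrowright^a(y\,\wh\curvearrowright^b z)$, and similarly for the three remaining terms. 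Linearity of $\Theta$ then yields
\begin{equation*}
x\,\wh\curvearrowright^a(y\,\wh\curvearrowright^b z)-(x\,\wh\curvearrowright^a y)\,\wh\curvearrowright^b z
= y\,\wh\curvearrowright^b(x\,\wh\curvearrowright^a z)-(y\,\wh\curvearrowright^b x)\,\wh\curvearrowright^a z,
\end{equation*}
which is precisely the defining identity \eqref{mpl} for the deformed family.

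There is essentially no obstacle: once Theorem \ref{main} has supplied a bijective intertwiner between the two families of products, the multi-pre-Lie identity, being a purely algebraic relation among iterated products, is automatically preserved. In other words, $\Theta$ is a multi-pre-Lie isomorphism from $(\mathcal T_E^V,(\curvearrowright^a)_{a\in E})$ onto $(\mathcal T_E^V,(\wh\curvearrowright^a)_{a\in E})$, so the latter inherits the multi-pre-Lie structure from the former.
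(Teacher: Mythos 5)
Your proposal is correct and is essentially identical to the paper's own proof: the authors likewise deduce the corollary immediately from the multi-pre-Lie property of $(\curvearrowright^a)_{a\in E}$ together with the transport identity $\sigma\,\wh\curvearrowright^a\,\tau=\Theta\big(\Theta^{-1}(\sigma)\curvearrowright^a\Theta^{-1}(\tau)\big)$, which is just a restatement of the intertwining relation you invoke. Your write-up merely spells out the substitution $x'=\Theta^{-1}(x)$ and the iterated application of \eqref{deformation_curve} that the paper leaves implicit.
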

\begin{proof}
This is immediate from the fact that $\big(\mathcal {T}_E^V,(\curvearrowright^a)_{a\in E}\big)$ is $E$-multiple pre-Lie, and from
$$ \sigma \wh\curvearrowright^a \tau=\Theta\big(\Theta^{-1}(\sigma)\curvearrowright^a\Theta^{-1}(\tau)\big).$$
\end{proof}
It is natural to introduce the associated $\Omega$-indexed family of modified derivations:
\begin{equation} \label{deformed_uparrow}
\wh\uparrow^\omega:=\Theta\uparrow^\omega\Theta^{-1},
\end{equation}
which obviously form a family of commuting derivations with respect to the new family of pre-Lie products $(\wh\curvearrowright^a)_{a\in E}$. As an obvious consequence of Proposition \ref{free-e-omega} and Theorem \ref{main},
\begin{corollary}
 The space $ \CT_E^V $ endowed with $ (\wh{\curvearrowright}^{a})_{a\in E} $ and $(\wh\uparrow^\omega)_{\omega\in \Omega}$ is the free $S$-generated $E$-multiple pre-Lie algebra endowed with an $\Omega$-indexed family of commuting derivations. The isomorphism with the original structure $\Big(\mathcal T_E^V,(\curvearrowright^{a})_{a\in E},(\uparrow^\omega)_{\omega\in \Omega}\Big)$ is given by $\Theta$.
\end{corollary}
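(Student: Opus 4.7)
The strategy is to transport the universal property established in Proposition~\ref{free-e-omega} through the isomorphism $\Theta$ of Theorem~\ref{main}, so that no fresh computation is needed beyond checking that $\Theta$ respects the two pieces of structure in play (the family of pre-Lie products and the family of derivations).

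First I would recall that, by Theorem~\ref{main}, $\Theta$ is a linear bijection of $\CT_E^V$ fixing each generator $\bullet_\alpha$ and intertwining the pre-Lie products in the sense that $\Theta(\sigma\curvearrowright^{a}\tau)=\Theta(\sigma)\,\wh\curvearrowright^{a}\,\Theta(\tau)$ for all $\sigma,\tau\in\CT_E^V$ and $a\in E$. Then, by definition~\eqref{deformed_uparrow}, $\wh\uparrow^{\omega}=\Theta^{-1}\uparrow^{\omega}\Theta$, which rearranges to the intertwining identity $\Theta\circ\wh\uparrow^{\omega}=\uparrow^{\omega}\circ\Theta$. Together these two facts express exactly that $\Theta$ is an isomorphism of the structure $(\CT_E^V,(\curvearrowright^{a}),(\wh\uparrow^{\omega}))$ onto $(\CT_E^V,(\wh\curvearrowright^{a}),(\uparrow^{\omega}))$; in particular the $\wh\uparrow^{\omega}$ commute amongst themselves as derivations because the $\uparrow^{\omega}$ do.

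With this in hand, the universal property is transferred mechanically. Given any $E$-multiple pre-Lie algebra $(A,(\rhd^{a})_{a\in E},(D^{\omega})_{\omega\in\Omega})$ endowed with a commuting $\Omega$-indexed family of derivations, and any family $(\alpha_{s})_{s\in S}\subset A$, the plan is to invoke Proposition~\ref{free-e-omega} on the original (undeformed) structure to obtain a unique morphism sending $\bullet_{s}\mapsto\alpha_{s}$, and then to pre-compose with $\Theta^{\pm 1}$ to obtain the desired morphism out of the deformed structure; uniqueness follows from the bijectivity of $\Theta$. The only subtle point, which I would spell out carefully, is a bookkeeping one: keeping track of the direction in which $\Theta$ conjugates the derivations, so that one is applying the universal property to the correct side of the isomorphism. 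Beyond that, the argument reduces to citing Proposition~\ref{free-e-omega} and Theorem~\ref{main}.
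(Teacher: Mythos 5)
Your strategy coincides with the paper's: the paper proves this corollary with the single phrase ``as an obvious consequence of Proposition~\ref{free-e-omega} and Theorem~\ref{main}'', i.e.\ exactly the transport-of-structure argument you outline. But the point you set aside as ``bookkeeping'' --- the direction in which $\Theta$ conjugates the derivations --- is precisely where the content lies, and your resolution of it fails. You correctly observe that \eqref{deformed_uparrow}, read literally, gives $\Theta\circ\wh\uparrow^\omega=\uparrow^\omega\circ\Theta$, so that $\Theta$ intertwines the pair $\bigl((\curvearrowright^a),(\wh\uparrow^\omega)\bigr)$ with the pair $\bigl((\wh\curvearrowright^a),(\uparrow^\omega)\bigr)$. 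The trouble is that neither of these mixed pairs is a multi-pre-Lie algebra endowed with derivations, so there is no universal property on the target to pull back: $\uparrow^\omega$ is \emph{not} a derivation for $\wh\curvearrowright^a$. Indeed, on $\bullet_\alpha\,\wh\curvearrowright^a\,\bullet_\gamma=\sum_\ell\binom{\gamma}{\ell}\,\bullet_\alpha\curvearrowright^{a-\ell}\bullet_{\gamma-\ell}$ the contribution of $\uparrow^\omega$ acting at the grafting vertex carries the coefficients $\binom{\gamma}{\ell}$, whereas $\bullet_\alpha\,\wh\curvearrowright^a\,(\uparrow^\omega\bullet_\gamma)$ carries $\binom{\gamma+\omega}{\ell}$; the Leibniz rule fails as soon as $\omega\neq 0$ and an $\ell\neq 0$ term survives (the same binomial discrepancy as in the displayed example following the corollary). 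Conjugating this failure through $\Theta$, the operator $\Theta^{-1}\uparrow^\omega\Theta$ is then not a derivation for $\curvearrowright^a$ either, so your source structure also violates the axioms; your phrase ``commute amongst themselves as derivations'' is true only for the commutation, not for the Leibniz property relative to the products you paired them with.

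You can also see the obstruction directly in your ``mechanical transfer'': given $\bigl(A,(\rhd^a),(D^\omega)\bigr)$ and the morphism $\Phi_\alpha$ supplied by Proposition~\ref{free-e-omega}, the map $\Phi_\alpha$ itself respects the products $\curvearrowright^a$ but not your derivations, while $\Phi_\alpha\circ\Theta$ satisfies $\Phi_\alpha\Theta\,\wh\uparrow^\omega=D^\omega\Phi_\alpha\Theta$ but sends $\sigma\curvearrowright^a\tau$ to $\Phi_\alpha(\Theta\sigma\,\wh\curvearrowright^a\,\Theta\tau)$, which $\Phi_\alpha$ does not control; no composition with $\Theta^{\pm1}$ repairs both constraints simultaneously. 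The statement that is actually true, and clearly the intended one (compare the sentence preceding the corollary, which declares the $\wh\uparrow^\omega$ to be derivations for the \emph{new} products $(\wh\curvearrowright^a)$), pairs deformed with deformed: one must take $\wh\uparrow^\omega:=\Theta\circ\uparrow^\omega\circ\Theta^{-1}$, i.e.\ read the conjugation in \eqref{deformed_uparrow} in the opposite direction, and carry the products $(\wh\curvearrowright^a)$ rather than $(\curvearrowright^a)$ in the corollary. With that coherent pairing, $\Theta:\bigl(\CT_E^V,(\curvearrowright^a),(\uparrow^\omega)\bigr)\to\bigl(\CT_E^V,(\wh\curvearrowright^a),(\wh\uparrow^\omega)\bigr)$ intertwines products \emph{and} derivations in the same direction, the axioms and the universal property of Proposition~\ref{free-e-omega} transport verbatim, and the remainder of your argument goes through exactly as you planned.
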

\noindent Let us remark that $\wh\uparrow^\omega$ does not coincide with $\uparrow^\omega$: for example,
\begin{equs}
\uparrow^\omega\Theta  \begin{tikzpicture}[scale=0.2,baseline=0.1cm]
        \node at (0,0)  [dot,label= {[label distance=-0.2em]below: \scriptsize  $ \omega'' $} ] (root) {};
         \node at (0,5)  [dot,label={[label distance=-0.2em]above: \scriptsize  $ \omega' $}] (center) {};
            \draw[kernel1] (root) to
     node [sloped,below] {\small }     (center); 
    \node at (0,2.5) [fill=white,label={[label distance=0em]center: \scriptsize  $ a $} ] () {};
     \end{tikzpicture}  & = \sum_{\ell\in\Omega}{\omega''\choose\ell}  \begin{tikzpicture}[scale=0.2,baseline=0.1cm]
        \node at (0,0)  [dot,label= {[label distance=-0.2em]below: \scriptsize  $ \omega'' -\ell $} ] (root) {};
         \node at (0,5)  [dot,label={[label distance=-0.2em]above: \scriptsize  $ \omega + \omega' $}] (center) {};
            \draw[kernel1] (root) to
     node [sloped,below] {\small }     (center); 
    \node at (0,2.5) [fill=white,label={[label distance=0em]center: \scriptsize  $ a -\ell $} ] () {};
     \end{tikzpicture} +   \sum_{\ell\in\Omega}{\omega''\choose\ell}  \begin{tikzpicture}[scale=0.2,baseline=0.1cm]
        \node at (0,0)  [dot,label= {[label distance=-0.2em]below: \scriptsize  $ \omega + \omega'' -\ell $} ] (root) {};
         \node at (0,5)  [dot,label={[label distance=-0.2em]above: \scriptsize  $ \omega' $}] (center) {};
            \draw[kernel1] (root) to
     node [sloped,below] {\small }     (center); 
    \node at (0,2.5) [fill=white,label={[label distance=0em]center: \scriptsize  $ a -\ell $} ] () {};
     \end{tikzpicture}  \neq
     \\ \Theta \uparrow^\omega \begin{tikzpicture}[scale=0.2,baseline=0.1cm]
        \node at (0,0)  [dot,label= {[label distance=-0.2em]below: \scriptsize  $ \omega''  $} ] (root) {};
         \node at (0,5)  [dot,label={[label distance=-0.2em]above: \scriptsize  $ \omega' $}] (center) {};
            \draw[kernel1] (root) to
     node [sloped,below] {\small }     (center); 
    \node at (0,2.5) [fill=white,label={[label distance=0em]center: \scriptsize  $ a $} ] () {};
     \end{tikzpicture}  & = \sum_{\ell\in\Omega}{\omega''\choose\ell}  \begin{tikzpicture}[scale=0.2,baseline=0.1cm]
        \node at (0,0)  [dot,label= {[label distance=-0.2em]below: \scriptsize  $ \omega'' -\ell $} ] (root) {};
         \node at (0,5)  [dot,label={[label distance=-0.2em]above: \scriptsize  $ \omega + \omega' $}] (center) {};
            \draw[kernel1] (root) to
     node [sloped,below] {\small }     (center); 
    \node at (0,2.5) [fill=white,label={[label distance=0em]center: \scriptsize  $ a -\ell $} ] () {};
     \end{tikzpicture} +   \sum_{\ell\in\Omega}{\omega + \omega''\choose\ell}  \begin{tikzpicture}[scale=0.2,baseline=0.1cm]
        \node at (0,0)  [dot,label= {[label distance=-0.2em]below: \scriptsize  $ \omega + \omega'' -\ell $} ] (root) {};
         \node at (0,5)  [dot,label={[label distance=-0.2em]above: \scriptsize  $ \omega' $}] (center) {};
            \draw[kernel1] (root) to
     node [sloped,below] {\small }     (center); 
    \node at (0,2.5) [fill=white,label={[label distance=0em]center: \scriptsize  $ a -\ell $} ] () {};
     \end{tikzpicture}
\end{equs}

\noindent Finally, from Proposition \ref {e-omega-mpl}, we get that the vector space $\mathcal T^V_E$ endowed with the binary products $\wh\curvearrowright^{a,\omega}$ is a $E\times\Omega$-multiple pre-Lie algebra, with
\begin{equation}
\sigma \wh\curvearrowright^{a,\omega} \tau=\Theta\big(\Theta^{-1}(\sigma)\curvearrowright^{a,\omega}\Theta^{-1}(\tau)\big).
\end{equation}

\section{Algebraic deformations of various products}
\label{section::3}

We consider three products on decorated trees: grafting, plugging and insertion. The grafting product is deformed by using the isomorphism $\Theta$ and we construct an associative product via the Guin-Oudom procedure associated to it. Then, the coproduct dual of this product is identified as the deformed Butcher-Connes-Kreimer coproduct on decorated trees. For the plugging, the deformation is not exactly given by $ \Theta $ but follows the same spirit. The associated product obtained by applying Guin-Oudom is the dual of  a Butcher-Connes-Kreimer coproduct where derivations have been added in the definition.
Then, we also define the deformed insertion from this deformed product.
The last part of this section focuses on the cointeraction between deformed products.
\subsection{Reminder on the Chu-Vandermonde identity}\label{chuvdm}
 Let $S$ be any finite set, and let $k,\ell:S\to \N$ with $\N:=\{0,1,2,\ldots\}$. Factorials and binomial coefficients are defined by:
\begin{equation}
\ell!:=\prod_{x\in S}\ell(x)!,\hskip 12mm {k\choose \ell}:=\prod_{x\in S}{k(x)\choose\ell(x)},
\end{equation}
where $\displaystyle {a\choose b}:=\frac{a!}{b!(a-b)!}$ if $a,b\in\N$ and $a\ge b$, with the convention $\displaystyle {a\choose b}=0$ if $a,b\in\N$ and $a<b$. now let $\wt S$ be another finite set, and let $\pi:S\to \wt S$ be a map (not necessarily surjective nor injective). For any map $\ell:S\to\N$ we define $\pi_*\ell:\wt S\to \N$ by:
\begin{equation}
\pi_*\ell(x)=
\begin{cases}
0 \hbox{ if } x \hbox{ is not in the image of }\pi,\\
\displaystyle \sum_{y\in S,\,\pi(y)=x}\ell(y)  \hbox{ if }x \hbox{ is in the image of }\pi.
\end{cases}
\end{equation}
\begin{proposition}(Chu-Vandermonde identity)
For any $\wt\ell:\wt S\to\N$ and for any $k:S\to \N$ the following holds:
\begin{equation}\label{chu-vdm}
{\pi_*k\choose \wt \ell}=\sum_{\ell,\,\pi_*\ell=\wt\ell}{k\choose\ell}.
\end{equation}
\end{proposition}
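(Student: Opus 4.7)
The plan is to reduce the statement to the classical (generalized) Chu-Vandermonde identity by factorizing both sides over the fibers of $\pi$.

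First I would observe that $S$ decomposes as the disjoint union $S=\bigsqcup_{x\in\wt S}\pi^{-1}(x)$ (some fibers possibly empty). By the definition of factorials and binomial coefficients as products over the indexing set, the left-hand side factors as
\begin{equation*}
\binom{\pi_* k}{\wt\ell}=\prod_{x\in\wt S}\binom{\pi_* k(x)}{\wt\ell(x)}=\prod_{x\in\wt S}\binom{\sum_{y\in\pi^{-1}(x)}k(y)}{\wt\ell(x)},
\end{equation*}
with the convention that the inner sum is $0$ when $\pi^{-1}(x)=\emptyset$. Similarly, fixing $\ell:S\to\N$ with $\pi_*\ell=\wt\ell$ amounts to choosing, independently for each $x\in\wt S$, a map $\ell_x:\pi^{-1}(x)\to\N$ subject to $\sum_{y\in\pi^{-1}(x)}\ell_x(y)=\wt\ell(x)$. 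Hence the right-hand side factors as
\begin{equation*}
\sum_{\ell,\,\pi_*\ell=\wt\ell}\binom{k}{\ell}=\prod_{x\in\wt S}\ \sum_{\substack{\ell_x:\pi^{-1}(x)\to\N\\ \sum_y\ell_x(y)=\wt\ell(x)}}\ \prod_{y\in\pi^{-1}(x)}\binom{k(y)}{\ell_x(y)}.
\end{equation*}

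This reduces the statement to the case where $\wt S$ is a singleton, i.e.\ to showing the multi-variable identity
\begin{equation*}
\binom{\sum_{y\in T}k(y)}{n}=\sum_{\substack{\ell:T\to\N\\ \sum_y\ell(y)=n}}\prod_{y\in T}\binom{k(y)}{\ell(y)}
\end{equation*}
for any finite set $T$ (playing the role of a fiber), any $k:T\to\N$, and any $n\in\N$. The empty-fiber case is a trivial check ($0=0$ when $n>0$, $1=1$ when $n=0$).

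The final step is then to prove the displayed multi-variable identity, which I would do by generating functions. Expanding
\begin{equation*}
(1+z)^{\sum_{y\in T}k(y)}=\prod_{y\in T}(1+z)^{k(y)}=\prod_{y\in T}\sum_{\ell(y)\ge 0}\binom{k(y)}{\ell(y)}z^{\ell(y)}=\sum_{\ell:T\to\N}\left(\prod_{y\in T}\binom{k(y)}{\ell(y)}\right) z^{\sum_y\ell(y)},
\end{equation*}
and extracting the coefficient of $z^n$ on both sides yields exactly the identity. Alternatively, one may proceed by induction on $|T|$, using the binary Chu-Vandermonde $\binom{m+m'}{n}=\sum_{j+j'=n}\binom{m}{j}\binom{m'}{j'}$ at each step. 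There is no real obstacle here; the only point requiring a little care is the bookkeeping for fibers outside the image of $\pi$ (where both sides vanish for $\wt\ell(x)>0$ by the conventions on binomials and on empty sums).
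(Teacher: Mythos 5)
Your proof is correct and is essentially the paper's argument: the paper extracts the coefficient of $\prod_{\wt s\in\wt S}X_{\wt s}^{\wt\ell(\wt s)}$ from the single multivariate identity $\prod_{s\in S}(1+X_{\pi(s)})^{k(s)}=\prod_{\wt s\in\wt S}(1+X_{\wt s})^{\pi_*k(\wt s)}$, which is exactly your generating-function computation done in one stroke. Your preliminary factorization over the fibers of $\pi$, followed by a univariate coefficient extraction per fiber, just makes explicit the variable separation that the multivariate version handles implicitly, so the two proofs coincide in substance.
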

\begin{proof}
Consider the obvious equality between polynomials with variables indexed by $\wt S$:
\begin{equation}
\prod_{s\in S}\left(1+X_{\pi(s)}\right)^{k(s)}=\prod_{\wt s\in\wt S}\left(1+X_{\wt s}\right)^{\pi_*k(\wt s)},
\end{equation}
and pick the coefficient of the monomial $\displaystyle \prod_{\wt s\in\wt S}X_{\wt s}^{\wt\ell(\wt s)}$ on both sides.
\end{proof}
\subsection{Deformation of grafting product}
We denote by $ P_E^V $ the set of planted trees and by $ \mathcal{P}_E^V $ their linear span. \label{planted_p} A planted tree is of the form $ \mathcal{I}_{a}(\tau) $ where $ \tau \in \mathcal{T}_E^V $ and $ \mathcal{I}_{a}(\tau) $ denotes the grafting of the tree $ \tau $ onto a new root via an edge decorated by $ a \in E$. Remark that, according to our conventions, the root of a planted tree is not decorated. We define two pre-Lie products on the planted trees by:
\begin{equs} \label{def_pre_a}
\mathcal{I}_{a}(\sigma) \curvearrowright \mathcal{I}_{b}( \tau) =   \mathcal{I}_{b}( \sigma \curvearrowright^{a}  \tau), \quad \mathcal{I}_{a}(\sigma) \, \wh\curvearrowright \, \mathcal{I}_{b}( \tau) =   \mathcal{I}_{b}( \sigma \wh\curvearrowright^{a}   \tau).
\end{equs}
\label{grafting_planted_trees} \label{deformed_grafting_planted_trees} We have the following property
\begin{proposition}
The space $\mathcal{P}_E^V$ endowed with $ \curvearrowright  $ or $\wh \curvearrowright  $ is a pre-Lie algebra.
\end{proposition}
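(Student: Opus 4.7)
The strategy is to verify the pre-Lie associativity axiom by a direct unfolding of the defining relation \eqref{def_pre_a} and then invoking the multi-pre-Lie identity \eqref{mpl} satisfied by the families $(\curvearrowright^a)_{a\in E}$ and $(\wh\curvearrowright^a)_{a\in E}$ on $\mathcal{T}_E^V$.

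Concretely, take three planted trees $x=\mathcal{I}_a(\sigma)$, $y=\mathcal{I}_b(\tau)$, $z=\mathcal{I}_c(\rho)$, where $\sigma,\tau,\rho\in\mathcal{T}_E^V$ and $a,b,c\in E$. Applying \eqref{def_pre_a} twice gives
\begin{equs}
(x\curvearrowright y)\curvearrowright z - x\curvearrowright(y\curvearrowright z)
= \mathcal{I}_c\bigl((\sigma\curvearrowright^a\tau)\curvearrowright^b\rho - \sigma\curvearrowright^a(\tau\curvearrowright^b\rho)\bigr),
\end{equs}
and symmetrically
\begin{equs}
(y\curvearrowright x)\curvearrowright z - y\curvearrowright(x\curvearrowright z)
= \mathcal{I}_c\bigl((\tau\curvearrowright^b\sigma)\curvearrowright^a\rho - \tau\curvearrowright^b(\sigma\curvearrowright^a\rho)\bigr).
\end{equs}
The pre-Lie axiom for $\curvearrowright$ is therefore equivalent to
\begin{equs}
\sigma\curvearrowright^a(\tau\curvearrowright^b\rho) - (\sigma\curvearrowright^a\tau)\curvearrowright^b\rho
= \tau\curvearrowright^b(\sigma\curvearrowright^a\rho) - (\tau\curvearrowright^b\sigma)\curvearrowright^a\rho,
\end{equs}
which is precisely the multi-pre-Lie relation \eqref{mpl} applied to $(\sigma,\tau,\rho)$ with indices $(a,b)$, already established for grafting in Example~\ref{rooted_r}.

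The argument for $\wh\curvearrowright$ is identical, replacing $\curvearrowright^a$ by $\wh\curvearrowright^a$ throughout: the corollary following Theorem~\ref{main} guarantees that $\bigl(\mathcal{T}_E^V,(\wh\curvearrowright^a)_{a\in E}\bigr)$ is again an $E$-multi-pre-Lie algebra, so the exact same cancellation produces the desired equality in $\mathcal{P}_E^V$. There is no genuine obstacle here; the only thing to notice is that the pre-Lie index $a$ of the left argument becomes the $E$-label of the grafting $\curvearrowright^a$ (resp.\ $\wh\curvearrowright^a$) inside $\mathcal{I}_b(\cdot)$, while the outer planting label $c$ of $z$ survives unchanged on both sides — so the two planted-tree associators differ only by the bracketed expression, which the multi-pre-Lie axiom equates.
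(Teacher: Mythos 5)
Your proof is correct and takes essentially the same route as the paper: the paper identifies $\mathcal{P}_E^V$ with $\mathcal{T}_E^V\otimes \mathbf{k}E$ via $\mathcal{I}_a(\tau)\mapsto\tau\otimes a$ and cites \cite{F2018} for the equivalence between the $E$-multiple pre-Lie property of $(\curvearrowright^a)_{a\in E}$ (resp. $(\wh\curvearrowright^a)_{a\in E}$) and the pre-Lie property of the induced product on planted trees, which is precisely the reduction you perform. The only difference is presentational: you spell out explicitly, via \eqref{def_pre_a}, the unfolding of the two associators and their identification with the multi-pre-Lie relation \eqref{mpl} — including the appeal to the corollary of Theorem~\ref{main} for the deformed case — where the paper delegates this computation to the reference.
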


\begin{proof}
This comes from the obvious identification of $\mathcal{P}_E^V$ with $\mathcal{T}_E^V\otimes \mathbf kE$ given by
$$\mathcal{I}_{a}(\tau)\longmapsto \tau\otimes a.$$
It is easily seen that the structure $ \curvearrowright^{a}  $ resp. $(\wh\curvearrowright^{a})_{a \in E}$ on $\mathcal{T}_E$ is $E$-multiple pre-Lie if and only if the bilinear product $ \curvearrowright$ resp. $\wh \curvearrowright $ on $\mathcal{T}_E^V\otimes \mathbf kE$ given by
$$(\sigma \otimes a)\curvearrowright (\tau\otimes b):=(\sigma\curvearrowright^a \tau)\otimes b
\quad \hbox{resp. }(\sigma \otimes a) \, \wh\curvearrowright \, (\tau\otimes b):=(\sigma \wh\curvearrowright^a \tau)\otimes b$$
is pre-Lie \cite{F2018}.
\end{proof}

Below we present an example of computation with $ \curvearrowright  $:
\begin{equs} \label{grafting_ident}
  \begin{tikzpicture}[scale=0.2,baseline=0.1cm]
        \node at (0,0)  [dot,label= {[label distance=-0.2em]below: \scriptsize  $  $} ] (root) {};
         \node at (0,4)  [dot,label={[label distance=-0.2em]above: \scriptsize  $ \alpha $}] (right) {};
            \draw[kernel1] (right) to
     node [sloped,below] {\small }     (root); 
     \node at (0,2) [fill=white,label={[label distance=0em]center: \scriptsize  $ a $} ] () {};
     \end{tikzpicture}   \curvearrowright \begin{tikzpicture}[scale=0.2,baseline=0.1cm]
        \node at (0,0)  [dot,label= {[label distance=-0.2em]below: \scriptsize  $  $} ] (root) {};
         \node at (0,4)  [dot,label={[label distance=-0.2em]above: \scriptsize  $ \beta $}] (right) {};
            \draw[kernel1] (right) to
     node [sloped,below] {\small }     (root); 
     \node at (0,2) [fill=white,label={[label distance=0em]center: \scriptsize  $ b $} ] () {};
     \end{tikzpicture}   =  \begin{tikzpicture}[scale=0.2,baseline=0.1cm]
        \node at (0,0)  [dot,label= {[label distance=-0.2em]below: \scriptsize  $ $} ] (root) {};
         \node at (-2,3)  [dot,label={[label distance=-0.2em]left: \scriptsize  $ \beta $} ] (left) {};
          \node at (0,6)  [dot,label={[label distance=-0.2em]right: \scriptsize  $ \alpha $} ] (center) {};
          \draw[kernel1] (left) to
     node [sloped,below] {\small }     (root);
      \draw[kernel1] (center) to
     node [sloped,below] {\small }     (left);
     \node at (-1,1.5) [fill=white,label={[label distance=0em]center: \scriptsize  $ b $} ] () {};
      \node at (-1,4.5) [fill=white,label={[label distance=0em]center: \scriptsize  $ a $} ] () {};
     \end{tikzpicture}
\end{equs}
The main difference with \eqref{ex_grafting} is that we do not have decorations at the root and one cannot graft at the root: we get one term less.
If we extend $ \Theta $ defined in Theroem~\eqref{main} to $ \CP_E^V $ by:
\begin{equs} \label{extension_theta}
\Theta(\CI_a(\tau)) := \CI_a(\Theta(\tau)),
\end{equs}
we obtain that $\wh \curvearrowright $ is a deformation of $  \curvearrowright $. For every $ \tau, \sigma \in \CT_E^V $, one has
\begin{equs} \label{deformation_curve}
\Theta \left( \mathcal{I}_{a}(\sigma)  \curvearrowright \mathcal{I}_{b}( \tau) \right) = \Theta \left( \mathcal{I}_{a}(\sigma) \right) \, \wh \curvearrowright \, \Theta \left( \mathcal{I}_{b}( \tau) \right).
\end{equs}
Indeed, 
\begin{equs}
\Theta \Big( \mathcal{I}_{a}(\sigma)  \curvearrowright \mathcal{I}_{b}( \tau) \Big) 
 & = \Theta \Big( \CI_b( \sigma \curvearrowright^a  \tau )\Big)
 \\  & = \CI_b \Big( \Theta \left( \sigma \curvearrowright^a  \tau \right)\Big)
\\ & = \mathcal{I}_{b}\Big( \Theta \left( \sigma \right) \, \wh \curvearrowright^a \, \Theta \left( \tau \right) \Big)
\\ & = \CI_{a}\Big(\Theta \left( \sigma \right)\Big) \, \wh \curvearrowright \,\mathcal{I}_{b}\Big( \Theta \left(  \tau \right)\Big)
\\ & = \Theta \Big( \mathcal{I}_{a}(\sigma) \Big) \, \wh \curvearrowright \, \Theta \Big( \mathcal{I}_{b}( \tau) \Big) 
\end{equs}
where we have used \eqref{deformation_curve}, \eqref{def_pre_a} and \eqref{extension_theta}.
The deformation of \eqref{grafting_ident} is given by
\begin{equs} \label{ex_grafting_deformed}
\begin{tikzpicture}[scale=0.2,baseline=0.1cm]
        \node at (0,0)  [dot,label= {[label distance=-0.2em]below: \scriptsize  $  $} ] (root) {};
         \node at (0,4)  [dot,label={[label distance=-0.2em]above: \scriptsize  $ \alpha $}] (right) {};
            \draw[kernel1] (right) to
     node [sloped,below] {\small }     (root); 
     \node at (0,2) [fill=white,label={[label distance=0em]center: \scriptsize  $ a $} ] () {};
     \end{tikzpicture} \, \wh \curvearrowright \begin{tikzpicture}[scale=0.2,baseline=0.1cm]
        \node at (0,0)  [dot,label= {[label distance=-0.2em]below: \scriptsize  $  $} ] (root) {};
         \node at (0,4)  [dot,label={[label distance=-0.2em]above: \scriptsize  $ \beta $}] (right) {};
            \draw[kernel1] (right) to
     node [sloped,below] {\small }     (root); 
     \node at (0,2) [fill=white,label={[label distance=0em]center: \scriptsize  $ b $} ] () {};
     \end{tikzpicture}   = 
      \begin{tikzpicture}[scale=0.2,baseline=0.1cm]
        \node at (0,0)  [dot,label= {[label distance=-0.2em]below: \scriptsize  $ $} ] (root) {};
         \node at (-2,3)  [dot,label={[label distance=-0.2em]left: \scriptsize  $ \beta $} ] (left) {};
          \node at (0,6)  [dot,label={[label distance=-0.2em]right: \scriptsize  $ \alpha $} ] (center) {};
          \draw[kernel1] (left) to
     node [sloped,below] {\small }     (root);
      \draw[kernel1] (center) to
     node [sloped,below] {\small }     (left);
     \node at (-1,1.5) [fill=white,label={[label distance=0em]center: \scriptsize  $ b $} ] () {};
      \node at (-1,4.5) [fill=white,label={[label distance=0em]center: \scriptsize  $ a $} ] () {};
     \end{tikzpicture} 
     +  \sum_{0 < |\ell|_s \leq |\beta|_s \wedge |a|_s} {\beta \choose \ell}  \begin{tikzpicture}[scale=0.2,baseline=0.1cm]
        \node at (0,0)  [dot,label= {[label distance=-0.2em]below: \scriptsize  $ $} ] (root) {};
         \node at (-2,3)  [dot,label={[label distance=-0.2em]left: \scriptsize  $ \beta - \ell $} ] (left) {};
          \node at (0,6)  [dot,label={[label distance=-0.2em]right: \scriptsize  $ \alpha $} ] (center) {};
          \draw[kernel1] (left) to
     node [sloped,below] {\small }     (root);
      \draw[kernel1] (center) to
     node [sloped,below] {\small }     (left);
     \node at (-1,1.5) [fill=white,label={[label distance=0em]center: \scriptsize  $ b $} ] () {};
      \node at (-1,4.5) [fill=white,label={[label distance=0em]center: \scriptsize  $ a -\ell $} ] () {};
     \end{tikzpicture}
\end{equs}

We want to construct a Hopf algebra from the pre-Lie algebra $ (\mathcal{P}_E^V,\wh \curvearrowright)$. We follow the path initiated by Guin and Oudom in \cite{Guin1, Guin2}. It has also been used in \cite{F2018} for the pre-Lie algebra $ (\mathcal{P}_E^V, \curvearrowright)$ for constructing a Butcher-Connes-Kreimer type coproduct. We want to see how this perspective is robust toward the deformation of the pre-Lie product. We denote by $ \mathfrak{g} $ the Lie algebra $ (\mathcal{P}_E^V, \left[ \cdot \right]) $ where the bracket is given for $ \bar \sigma, \bar \tau \in \CT_E^V $ by:
\begin{equs}
\left[ \bar \sigma,  \bar \tau  \right] = \bar \sigma \, \wh \curvearrowright \,  \bar\tau -  \bar \tau \, \wh \curvearrowright \,  \bar \sigma.
\end{equs} 

We first consider the symmetric algebra $S( \mathcal{P}_E^V )$ \label{forest_planted_p} with its usual multiplicative coproduct $\Delta$ defined for every $ \bar \tau \in \mathcal{P}_E^V $ by:
\begin{equs} \label{delta_p}
\Delta \bar \tau &=\bar \tau \otimes \one + \one \otimes \bar \tau. 
\end{equs}
A natural basis of this symmetric algebra is given by trees with undecorated root. We denote by $X^k$ the decorated one-vertex tree $\bullet_k$. We extend the map $\Delta$ multiplicatively to trees with decorated root by 
\begin{equs}\label{delta_p_trees}
\Delta X^k = X^k \otimes \one +  \one \otimes X^k.
\end{equs}
Multiplicativity is not intended for the tree product but for the natural action of $ S(\CP_E^V) $  on $ \CT_E^V $ obtained by identifying the roots of the planted trees with the one of the tree in $ \CT_E^V $. We identify the forest $ X^k \widetilde{\prod}_{i \in I} \CI_{a_i}(\sigma_i) $ with the decorated tree $ X^k \prod_{i \in I} \CI_{a_i}(\sigma_i)  $. Using Sweedler's notation it reads for $ w \in S(\mathcal{P}_E^V) $: $
\Delta w =\sum_{(w)} w^{(1)}\otimes w^{(2)} $.\\

We are now in position to apply the Guin-Oudom construction to the pre-Lie algebra $(\mathcal{P}_E^V,\, \wh \curvearrowright )$. One can define a product on $ S( \mathcal{P}_E^V ) $ for every  $u,v, w \in S(\mathcal{P}_E^V)$, $ x, y  \in \mathcal{P}_E^V $ by
\begin{equs}\label{e::symmetric product}
\begin{aligned}
\one \bullet w&= w, \quad 
u\bullet \one =\one^{\star}(u), \quad  x \bullet y = x  \, \wh \curvearrowright \, y, \\
w \bullet uv &=\sum_{(w)} ( w^{(1)} \bullet u)(w^{(2)} \bullet v),\\
 x v \bullet y  &=  x \, \wh \curvearrowright \,   ( v\bullet y)  - (x \, \wh \curvearrowright \, v) \bullet y
 \end{aligned}
\end{equs}
 where $\one^\star$ stands for the counit, and where $ \wh \curvearrowright $ is extended to $   \mathcal{P}_E^V \otimes S( \mathcal{P}_E^V ) $ in the following way:
\begin{align*}
x \, \wh \curvearrowright \, x_1\ldots x_k 
&=\sum_{i=1}^k x_1\ldots (x \, \wh \curvearrowright \, x_i)\ldots x_k,
\end{align*}
with $ x_i \in  \mathcal{P}_E^V $. \label{star_0}
Then, we define the associative product $ \star_0 $ as:
\begin{equs} \label{def_*}
w \star_0 v = \sum_{(w)}\left(  (w^{(1)} \bullet v) w^{(2)} \right).
\end{equs}
One has the following characterisation from \cite[Theorem 2.12]{Guin2}:

\begin{proposition}
The space $ (S(\mathcal{P}_E^V ), \star_0, \Delta) $ is a Hopf algebra isomorphic to the enveloping algebra $ \CU(\mathfrak{g}) $ where the Lie algebra $ \mathfrak{g} $  is $ \mathcal{P}_E^V $ 
endowed with the
antisymmetrization of the pre-Lie product $\wh \curvearrowright$.
\end{proposition}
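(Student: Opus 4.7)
The plan is to apply the general Guin–Oudom construction \cite[Theorem 2.12]{Guin2}, whose hypotheses I verify in our setting. All the work is in checking that our deformed pre-Lie product $\wh\curvearrowright$ satisfies the only assumption used by Guin–Oudom, namely the left pre-Lie identity; nothing specific to the deformation is needed beyond that.

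First, I confirm that $(\mathcal{P}_E^V, \wh\curvearrowright)$ is a (left) pre-Lie algebra. This was established just above via the identification $\mathcal{P}_E^V \cong \mathcal{T}_E^V \otimes \mathbf{k}E$, $\mathcal{I}_a(\tau) \mapsto \tau \otimes a$, combined with the fact that $(\wh\curvearrowright^a)_{a \in E}$ forms an $E$-multiple pre-Lie structure on $\mathcal{T}_E^V$. Next, I check that the recursive formulas in \eqref{e::symmetric product} consistently extend $\wh\curvearrowright$ to an action $\bullet$ of $S(\mathcal{P}_E^V)$ on itself. Induction on the total number of factors shows that the Leibniz-type rule $w \bullet uv = \sum_{(w)} (w^{(1)} \bullet u)(w^{(2)} \bullet v)$, together with the pre-Lie compatibility $xv \bullet y = x \, \wh\curvearrowright \, (v \bullet y) - (x \, \wh\curvearrowright \, v) \bullet y$, is well-posed, using at each step only the left pre-Lie identity for $\wh\curvearrowright$.

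I then turn to the product $\star_0$ defined by $w \star_0 v = \sum_{(w)}(w^{(1)} \bullet v) w^{(2)}$. Its associativity is the heart of Guin–Oudom's theorem and is a purely abstract consequence of the left pre-Lie identity, so it carries over verbatim to our deformed setting. The compatibility of $\Delta$ with $\star_0$ is verified on generators: every element of $\mathcal{P}_E^V \subset S(\mathcal{P}_E^V)$ is primitive for $\Delta$, and $\Delta$ is multiplicative for the symmetric product, which is enough to propagate compatibility with $\star_0$ by induction. This yields a cocommutative Hopf algebra $(S(\mathcal{P}_E^V), \star_0, \Delta)$ whose primitive part is exactly $\mathcal{P}_E^V$.

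Finally, on $\mathcal{P}_E^V$ the formulas give $x \star_0 y = x \, \wh\curvearrowright \, y + xy$, hence $x \star_0 y - y \star_0 x = x \, \wh\curvearrowright \, y - y \, \wh\curvearrowright \, x = [x,y]$, the Lie bracket of $\mathfrak{g}$. The universal property of the enveloping algebra then produces an algebra morphism $\Phi: \mathcal{U}(\mathfrak{g}) \to (S(\mathcal{P}_E^V), \star_0)$ extending the inclusion $\mathfrak{g} \hookrightarrow S(\mathcal{P}_E^V)$. A PBW-style argument, using that $x \star_0 v = xv + (\text{terms of strictly smaller symmetric degree})$, shows that $\Phi$ is a linear bijection; being already an algebra map, it is an isomorphism of Hopf algebras. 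The only step that would need genuine work if one were to reprove it from scratch is the associativity of $\star_0$, but since it depends solely on the pre-Lie identity, invoking \cite[Theorem 2.12]{Guin2} discharges this obstacle entirely.
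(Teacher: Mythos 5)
Your proposal is correct and follows essentially the same route as the paper: the paper obtains this proposition directly as an instance of \cite[Theorem 2.12]{Guin2}, the only genuine input being the previously established fact that $\wh\curvearrowright$ is pre-Lie on $\mathcal{P}_E^V$, which you verify in the same way via the identification $\mathcal{I}_a(\tau)\mapsto\tau\otimes a$. Your additional sketches (well-posedness of the $\bullet$ extension, primitivity of $\mathcal{P}_E^V$, the computation $x\star_0 y = x\,\wh\curvearrowright\,y + xy$, and the PBW bijection) are just unpacked ingredients of that same citation rather than a different argument.
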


In the sequel, we want to identify the dual of the product $ \star_0 $. We suppose that the set $V$ of vertex decorations coincides with the commutative monoid $\Omega$. It  will be given by the following maps $ \Delta_{\text{\tiny{DCK}}} : S(\mathcal{P}_E^V ) \rightarrow S(\mathcal{P}_E^V ) \hattimes S(\mathcal{P}_E^V )$ and $ \bar{\Delta}_{\text{\tiny{DCK}}} : \mathcal{T}_E^V  \rightarrow S(\mathcal{P}_E^V ) \hattimes \mathcal{T}_{E}^V   $ \label{delta_dck} \label{delta_dck_bar} defined recursively by:
\begin{equs} \label{recur_def_deltaDCK}
\begin{aligned}
\Delta_{\text{\tiny{DCK}}}  \CI_a(\tau) & = 
\left( \id  \hattimes \CI_a \right) \bar{\Delta}_{\text{\tiny{DCK}}} \tau + \CI_a(\tau) \hattimes \one,
\quad \bar{\Delta}_{\text{\tiny{DCK}}} X^k  = \one \hattimes X^{k} 
\\ \bar{\Delta}_{\text{\tiny{DCK}}} \CI_a(\tau) & = \left( \id \hattimes \CI_a \right)  \bar{\Delta}_{\text{\tiny{DCK}}} \tau + \sum_{\ell \in \N^{d+1}} \frac{1}{\ell !} \CI_{a + \ell}(\tau) \hattimes X^{\ell}.
\end{aligned}
\end{equs}
The map $ \Delta_{\text{\tiny{DCK}}}  $ is extended using the product of $ S(\mathcal{T}_E^V) $ (denoted by $ \widetilde{\prod} $). We use the tree product (denoted by $ \prod $) for extending the map $ \bar{\Delta}_{\text{\tiny{DCK}}} $.
We also define the map $ \bar{\Delta}_{\text{\tiny{DCK}}}^{\root} $ by
\begin{equs}
\bar{\Delta}_{\text{\tiny{DCK}}}^{\root}  \CI_a(\tau) = \sum_{\ell \in \N^{d+1}} \frac{1}{\ell !} \CI_{a + \ell}(\tau) \hattimes X^{\ell}, \quad \bar{\Delta}_{\text{\tiny{DCK}}}^{\root}  X^k = \one \hattimes X^k
\end{equs}
and extend it multiplicatively with respect to the tree product. We have replaced the standard tensor product $ \otimes $ by $ \hattimes $ for making sense of the infinite sum over $ \ell $. Indeed, the maps $ \Delta_{\text{\tiny{DCK}}}  $ and $ \bar{\Delta}_{\text{\tiny{DCK}}} $ are triangular maps for the bigrading  given in \cite{BHZ} by
\[ 
	(|T_{\Labe}^{\Labn}|_{\text{bi}}) = ( |T_{\Labe}^{\Labn}|_{\text{grad}}, |N_T \setminus \lbrace \varrho_T \rbrace| + |E_T| ), 
\] 
where $ T $ is a rooted tree, $ \Labe : E_T \rightarrow E $ are the edge decorations and $ \Labn : N_T \rightarrow V  $ are the node decorations, $ T_{\Labe}^{\Labn} $ is the decorated tree with such decorations. We have used the grading $ |\cdot|_{\text{grad}} $ defined in \eqref{grading}.

We recall some basics on bigraded spaces extracted from \cite[Sec. 2.3]{BHZ}. A bigraded space $ V $ is given by
\begin{equs}
V = \bplus_{n \in \N^2} V_n\;,
\end{equs}
where each $ V_n $ is a vector space and any element $v$ of $ V $ is given as a formal sum $ v = \sum_{n \in \N^2} v_n$ with
$v_n \in V_n$ and such that there exists $k \in \N$, $v_n= 0$ when
$n_2 > k$.
For two bigraded spaces $V$ and $W$, we define $V \hattimes W$ as the bigraded space
\begin{equ}\label{newtensor}
V \hattimes W \eqdef \bplus_{n \in \N^2} \left[ \,
\bigoplus_{m+\ell = n}(V_m \otimes W_\ell)\right]. 
\end{equ}
We consider the following partial order on $\N^2$
\begin{equ}
(m_1, m_2) \ge (n_1, n_2)  \qquad\Leftrightarrow\qquad m_1 \ge n_1 \;\&\; m_2 \le n_2\;.
\end{equ}
 A family $\{A_{m,n}\}_{m,n \in \N^2}$ of linear maps  $A_{m,n}:V_n\to\bar V_m$  between two bigraded spaces $ V $ and $ \bar V $  is called  \textit{triangular} if $A_{m,n} = 0$ unless $m\ge n$.
 
 Following \cite[Remark 2.16]{BHZ}, one has 
 a canonical inclusion $V^* \otimes W^* \subset (V \hattimes W)^*$ and the dual $V^*$ consists of formal sums $\sum_n v^*_n$ with $v_n^* \in V_n^*$,
and  for every $k \in \N$ there exists $f(k)$ such that $v^*_n = 0$
for every $n \in \N^2$ with $n_1 \ge f(n_2)$.

From this definition of the dual, we clearly see from \eqref{deformation_preLie} that $   \wh \curvearrowright  $ is a map defined from $ {\mathcal{P}_E^V}^{*}  \hattimes {\mathcal{P}_E^V}^{* } $ into ${\mathcal{P}_E^V }^{*}$. 
Indeed, the product $ \wh  \curvearrowright $ gives a finite sum which is clearly an element of the dual. 
We can derive a recursive formulation of $  \wh  \curvearrowright $ in order to match the one for $ \Delta_{\text{\tiny{DCK}}} $. We also associate a grafting product to $ \bar{\Delta}_{\text{\tiny{DCK}}} $ denoted by $ \bar \curvearrowright  $. This product plays a central role in the recursive expression of $  \wh  \curvearrowright  $: 
\begin{equs} \label{def_deltabar}
\begin{aligned}
 \sigma \, \wh\curvearrowright \, \mathcal{I}_{b}( \tau) & = \mathcal{I}_{b}( \sigma \, \bar \curvearrowright \,  \tau), 
\\  \mathcal{I}_{a}(\tau) \, \bar \curvearrowright \, X^k  \tau_0 & = \sum_{\ell\in\N^{d+1}}{k\choose \ell}
X^{k-\ell} \mathcal{I}_{a-\ell}(\tau) \tau_0 + X^k \left( \mathcal{I}_{a}(\tau) \, \wh \curvearrowright \,   \tau_0  \right),
\\ 
\widetilde{\prod}_{i \in I} \mathcal{I}_{a_i}(\sigma_i)  \, \bar \curvearrowright \, \tau & = \CI_{a_j}(\sigma_j) \, \bar \curvearrowright \, \left(  \widetilde{\prod}_{i \neq j} \mathcal{I}_{a_i}(\sigma_i) \, \bar \curvearrowright \, \tau \right) 
\\ &  -\sum_{ j \in I \setminus \lbrace n \rbrace}\Big( \left( \CI_{a_j}(\sigma_j) \, \wh\curvearrowright \, \CI_{a_n}(\sigma_n) \right) \widetilde{\prod}_{\ell \notin \lbrace j,n \rbrace} \CI_{a_{\ell}}(\sigma_{\ell})
\Big) \, \bar \curvearrowright \, \tau
\end{aligned}
\end{equs}
\label{recursive_deformed_grafting}
where $ n \in J $, $ \sigma \in S(\mathcal{P}_E^V),  \tau \in \CT_E^V $.
  Here $ \tau_0   $ is a tree with root decoration zero, and can therefore be identified as an element of $ S(\CP_E^V) $.
The identities \eqref{def_deltabar} separate the grafting at the root from the grafting on the other nodes.
For the dual, we consider an inner product on the decorated trees  $  \langle \cdot , \cdot \rangle  $ such that for every $ \sigma, \tau \in \CT_E^V $:
\begin{align*} \label{inner_product}
\langle \sigma ,  \tau \rangle = \delta_{\sigma, \tau} S(\tau)
\end{align*}
where the algebraic symmetry factor $ S $ is defined inductively as follows: for any tree $\tau$ written as 
$
\tau = X^{k} \ \prod_{j=1}^m \mathcal{I}_{a_j}(\tau_j)^{\beta_j} \;, $
where we group terms (uniquely)
in such a way that $(a_i,\tau_i) \neq (a_j,\tau_j)$ for $i \neq j$, we inductively set
\begin{align}
S(\tau)
=
k!
\Big(
\prod_{j=1}^{m}
S(\tau_{j})^{\beta_{j}}
\beta_{j}!
\Big)\;.
\end{align}
The presence of $ k! $ can be seen as an internal hidden symmetry at each vertex, as if each vertex, seen under the microscope, behaves as a list of $ d + 1 $ packets of smaller points.
We adopt the same definition on $ S(\CP_E^V) $ where the product $ \prod $ is replaced by $ \tilde{\prod} $.
We extend naturally the inner product \eqref{inner_product} to the space $ \mathcal{T}_E^V \otimes \mathcal{T}_E^V  $ by setting for $ \sigma_1, \sigma_2,  \tau_1,  \tau_2 \in \CT_E^V $
\begin{align*}
\langle  \sigma_{1} \otimes \sigma_{2},  \tau_{1} \otimes  \tau_{2} \rangle = 
\langle  \sigma_{1} ,  \tau_{1}  \rangle \langle  \sigma_{2},  \tau_{2} \rangle.
\end{align*}

In the proof below and also in the rest of the paper, we will use multinomial coefficients for $ k, \ell_1,...\ell_n \in \N^{d+1} $ :
\begin{equs}
{k \choose \ell_1,...,\ell_n} = {k \choose (\ell_i)_{i \in \lbrace 1,...,n\rbrace}} = \frac{k !} { (k - \sum_i \ell_i)!  \prod_i (\ell_i!) }.
 \end{equs}
 Let $ I $ be a finite set and let $ A $ be a commutative algebra. We consider an element $ \tau = \otimes_{i \in I} \tau_i $ in the tensor algebra  $\otimes_{i \in I} A  $. For every  partition $ \pi $ of $ I $, 
we define the product $ \mathcal{M}^{\pi} $ by:
\begin{equs}
\mathcal{M}^{\pi} \tau = \bigotimes_{J \in \pi} \prod_{j \in J} \tau_j, \quad \tau \in \otimes_{i \in I} A 
\end{equs}
where the product $ \prod $ is the product in $ A $, and where $ J \in \pi $ means that $ J $ is a block
of the partition $ \pi $.
We will also use this definition when the algebras in the tensor product are different. From the context, one can deduce the products used in every element of the partition.

\begin{theorem} \label{DCK_def}
The product $ \star_0 $ is the dual of the deformed Butcher-Connes-Kreimer coproduct $ \Delta_{\text{\tiny{DCK}}} $. One has
\begin{equs} \label{dual_DCK}
\langle \tau_1 \, \wh \curvearrowright \, \bar \tau_2 
 , \bar \tau_3 \rangle = \langle \tau_1 \hattimes \bar \tau_2
 , \Delta_{\text{\tiny{DCK}}} \bar \tau_3 \rangle, \, \, \langle \tau_1  \, \bar \curvearrowright \,  \tau_2
 , \tau_3 \rangle = \langle \tau_1 \hattimes  \tau_2
 , \bar{\Delta}_{\text{\tiny{DCK}}} \tau_3 \rangle 
\end{equs}
where $ \tau_1 \in S(\CP_E^V) $, $ \bar\tau_2, \bar \tau_3 \in \CP_E^V $ and $  \tau_2,  \tau_3 \in \CT_E^V $.
\end{theorem}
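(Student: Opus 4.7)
My plan is to prove the two identities in \eqref{dual_DCK} simultaneously by induction on the size of $\bar\tau_3$ (respectively $\tau_3$). The structure of the proof mirrors the parallel recursive definitions \eqref{recur_def_deltaDCK} of $\Delta_{\text{\tiny{DCK}}}$ and $\bar\Delta_{\text{\tiny{DCK}}}$, and \eqref{def_deltabar} of $\wh\curvearrowright$ and $\bar\curvearrowright$: each pair consists of one formula handling the piece above the root and one handling the piece at the root. First I would prove both identities in the case where $\tau_1$ is a single planted tree, and then extend to forests $\tau_1 \in S(\CP_E^V)$ using the Guin-Oudom construction: by \eqref{def_*}, $\star_0$ is determined on the symmetric algebra by the pre-Lie action of $\CP_E^V$, and its dual coproduct is necessarily the multiplicative extension (with respect to $\widetilde{\prod}$) of the dual on single planted trees. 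Since $\Delta_{\text{\tiny{DCK}}}$ has been defined by multiplicative extension, this step becomes automatic once the single-tree case is settled.

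For the pre-Lie induction, the base case $\tau_3 = X^k$ follows from $\bar\Delta_{\text{\tiny{DCK}}} X^k = \one \hattimes X^k$ and $\mathcal{I}_a(\sigma)\,\bar\curvearrowright\, X^k = \sum_\ell \binom{k}{\ell} X^{k-\ell}\mathcal{I}_{a-\ell}(\sigma)$: the binomial coefficient on the grafting side is exactly reproduced on the coproduct side, when one traces how $\bar\Delta_{\text{\tiny{DCK}}}$ acts on the resulting planted trees and takes into account the symmetry factor $S(X^{k-\ell})=(k-\ell)!$ which absorbs the $\frac{1}{\ell!}$ appearing in $\bar\Delta_{\text{\tiny{DCK}}}$. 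For the inductive step $\tau_3 = \mathcal{I}_b(\tau')$, the recursion $\bar\sigma\,\wh\curvearrowright\, \mathcal{I}_b(\tau) = \mathcal{I}_b(\bar\sigma\,\bar\curvearrowright\,\tau)$ pairs with the term $(\id\hattimes\mathcal{I}_b)\bar\Delta_{\text{\tiny{DCK}}}\tau$ of $\Delta_{\text{\tiny{DCK}}}\mathcal{I}_b(\tau)$ via the induction hypothesis on $\bar\curvearrowright$; the remaining root term $\mathcal{I}_b(\tau)\hattimes\one$ does not contribute since grafting a single planted tree onto another strictly enlarges the target. The second recursion of \eqref{def_deltabar}, decomposing $\mathcal{I}_a(\tau)\,\bar\curvearrowright\,(X^k\tau_0)$ into a root-grafting piece and a subtree-grafting piece, mirrors exactly the two summands of $\bar\Delta_{\text{\tiny{DCK}}}$ applied to $X^k\tau_0$.

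The combinatorial heart of the argument is the matching of the binomial coefficients $\binom{k}{\ell}$ on the grafting side with the factorials $\frac{1}{\ell!}$ on the coproduct side. When a decoration $k$ at a vertex is split both by a Taylor expansion at its own edge and by a further expansion deeper in the tree, the Chu-Vandermonde identity \eqref{chu-vdm} from Paragraph \ref{chuvdm} ensures that the sum over intermediate decorations on the grafting side collapses to the single binomial appearing on the coproduct side. This is precisely the identity that makes $\bar\Delta_{\text{\tiny{DCK}}}$ coassociative and is the compatibility needed to push the induction through branching points.

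The main technical obstacle I anticipate is the rigorous handling of the infinite sums over $\ell\in\N^{d+1}$ appearing in $\Delta_{\text{\tiny{DCK}}}$ and $\bar\Delta_{\text{\tiny{DCK}}}$: the coproducts live in the bigraded completed tensor product $\hattimes$ from \cite{BHZ}. To justify the pairing term by term, I would verify that both coproducts are triangular with respect to the bigrading recalled before the theorem: each root-extraction summand $\frac{1}{\ell!}\mathcal{I}_{a+\ell}(\tau)\hattimes X^\ell$ strictly decreases the second component of the bigrading of the left factor while raising it in the right factor, so that only finitely many terms contribute to any fixed pairing $\langle\tau_1\hattimes\bar\tau_2,-\rangle$. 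With triangularity secured, the pairing is well-defined termwise and the inductive identities are meaningful, closing the proof.
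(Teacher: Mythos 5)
Your overall skeleton is indeed the paper's: a mutual induction on $\tau_3$ mirroring the paired recursions \eqref{recur_def_deltaDCK} and \eqref{def_deltabar}, the vanishing of the root term $\CI_b(\tau)\hattimes\one$ against nontrivial left factors, factorial matching against the symmetry factors, and the bigraded completion for the infinite sums. But the step you call automatic is in fact the heart of the proof, and your justification for it fails. Multiplicativity of $\Delta_{\text{\tiny{DCK}}}$ is multiplicativity in its \emph{argument}: it reduces $\bar\tau_3$ to planted trees. The forest structure of $\tau_1$, by contrast, sits in the \emph{left tensor slot} of $\Delta_{\text{\tiny{DCK}}}\bar\tau_3$, and knowing $\langle x\hattimes\bar\tau_2,\Delta_{\text{\tiny{DCK}}}\bar\tau_3\rangle$ for single planted trees $x$ determines nothing about the components of the dual of $\star_0$ whose left slot is a genuine forest. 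To upgrade single-tree data to forests you would either have to dualize the braces recursion $xv\bullet y = x\,\wh\curvearrowright\,(v\bullet y)-(x\,\wh\curvearrowright\,v)\bullet y$ and verify that $\Delta_{\text{\tiny{DCK}}}$ satisfies the dual recursion --- which is exactly the computation you skipped --- or argue through coassociativity of $\Delta_{\text{\tiny{DCK}}}$, which is circular, since coassociativity is only available as a \emph{consequence} of Theorem~\ref{DCK_def} via the associativity of $\star_0$. The paper therefore runs the induction with a general forest $\tau_1=\widetilde{\prod}_{i\in I}\CI_{a_i}(\sigma_i)$ from the start: the crux is the simultaneous grafting of several branches at the root of $\tau_2=X^k\tau_0$, with the sum over subsets $J\subset I$ and the multinomial coefficient ${k\choose (\ell_i)_{i\in J}}$, matched to the subset expansion of $\bar{\Delta}_{\text{\tiny{DCK}}}\big(X^k\prod_{i\in I}\CI_{a_i}(\sigma_i)\big)$ through \eqref{product_delta}, the operator identity $\mathcal{M}^{(13)(24)}\big(\bar{\Delta}^{\root}_{\text{\tiny{DCK}}}\hattimes\Delta_{\text{\tiny{DCK}}}\big)\Delta=\bar{\Delta}_{\text{\tiny{DCK}}}$, and the coefficient check \eqref{root_ident}. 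Your base-case computation ${k\choose\ell}(k-\ell)!=k!/\ell!$ is precisely \eqref{root_ident}, but only its $|J|=1$ instance; the multi-branch case is where the content is, and it is not generated automatically from the single-branch one.

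Two smaller points. You locate the combinatorial heart in Chu--Vandermonde, but \eqref{chu-vdm} is not used in the paper's proof of this theorem: the only coefficient identity needed is the elementary multinomial computation \eqref{root_ident}, while Chu--Vandermonde enters the paper elsewhere, for the pre-Lie property of $\wh\rhd$. If your induction really needs \eqref{chu-vdm} to collapse sums over intermediate decorations, that is a symptom of the gap above: iterating single-tree graftings splits the same root decoration twice, and recombining the iterated binomials into one multinomial is exactly the forest-extension step you declared automatic. Finally, your triangularity remark is inaccurate: the summand $\frac{1}{\ell!}\CI_{a+\ell}(\tau)\hattimes X^\ell$ leaves the second component $|N_T\setminus\lbrace\varrho_T\rbrace|+|E_T|$ of the left factor unchanged and raises its first component $|\cdot|_{\text{grad}}$; well-definedness of the pairing is in any case immediate, because against a fixed basis element $\tau_1\hattimes\bar\tau_2$ at most one value of $\ell$ contributes.
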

\begin{proof} First, one can easily check that for $ \tau_1, \tau_2 \in S(\CP_E^V) $, $  \tau_3 \in \CT_E^V $
\begin{equs} \label{product_delta}
\langle X^k \tau_1 \tau_2,  \tau_3 \rangle = \langle \tau_1 \otimes X^k\tau_2, \Delta \tau_3 \rangle
\end{equs}
where $\Delta$ is the extension given in \eqref{delta_p_trees}.
Such an identity results from manipulating the definition of the symmetry factor $ S $.
 We proceed by induction on the number of edges for proving $\eqref{dual_DCK}$. For $ \tau_1 =  \widetilde{\prod}_{i \in I} \mathcal{I}_{a_i}( \sigma_i) \in S(\CP_E^V) $ and $ \tau_2,  \tau_3 \in \CT_E^V $, one has
\begin{equs}
\langle \tau_1 \, \wh\curvearrowright \, \mathcal{I}_{b}( \tau_2)  , \mathcal{I}_{b}( \tau_3) \rangle  = 
 \langle \tau_1 \, \bar  \curvearrowright \,  \tau_2  ,  \tau_3 \rangle.
\end{equs}
Then by applying the induction hypothesis on $ \bar \curvearrowright $, we get
\begin{equs}
\langle  \tau_1 \, \bar  \curvearrowright \,  \tau_2  ,  \tau_3 \rangle & = \langle  \tau_1 \hattimes  \tau_2  ,  \bar{\Delta}_{\text{\tiny{DCK}}}  \tau_3 \rangle 
\\ & = \langle  \tau_1 \hattimes \CI_b( \tau_2)  ,  \left( \id \hattimes \CI_b \right) \bar{\Delta}_{\text{\tiny{DCK}}}  \tau_3 \rangle
\\ & = \langle  \tau_1 \hattimes \CI_b( \tau_2)  ,   \Delta_{\text{\tiny{DCK}}} \CI_b( \tau_3) \rangle
\end{equs}
where we have used \eqref{recur_def_deltaDCK} in the last line.
For $  \tau_2  = X^k  \tau_0 $ where $   \tau_0 $ has zero node decoration at the root, one gets from \eqref{def_deltabar}
\begin{equs}
  \widetilde{\prod}_{i \in I} \mathcal{I}_{a_i}( \sigma_i) \, \bar \curvearrowright \,
   \tau_2 & = \sum_{J \subset I}  \sum_{\ell_i \in\N^{d+1},}{k\choose (\ell_i)_{i \in J}}
X^{k-\sum_{i \in J} \ell_i} \\ & \left(\prod_{i \in J} \mathcal{I}_{a_i-\ell_i}( \sigma_i) \right) \left( \widetilde{\prod}_{i \in I \setminus J} \mathcal{I}_{a_i}( \sigma_i) \, \wh \curvearrowright \, 
   \tau_0 \right).
\end{equs}
Then by using the recursive definition of the map $ \bar{\Delta}_{\text{\tiny{DCK}}} $ given in \eqref{recur_def_deltaDCK}, one has
\begin{equs}
\bar{\Delta}_{\text{\tiny{DCK}}} & \left( X^k \prod_{i \in I} \CI_{a_i}( \sigma_{i})  \right)  = \left( \one \hattimes X^k \right) \prod_{i \in I} \left(\tilde{\Delta}_{\text{\tiny{DCK}}}  \CI_{a_i}( \sigma_{i}) \right. \\ &  +   \sum_{\ell_i \in \N^{d+1}} \frac{1}{\ell_i !} \CI_{a_i + \ell_i}( \sigma_i) \hattimes X^{\ell_i} 
 )
 \\ &  = \sum_{J \subset I}  \left( \sum_{\ell_i \in \N^{d+1}} \widetilde{\prod}_{i \in J}  \frac{1}{\ell_i !} \CI_{a_i + \ell_i}( \sigma_i) \hattimes X^{k + \sum_{i \in J} \ell_i}  \right) \left(\tilde{\Delta}_{\text{\tiny{DCK}}} \widetilde{\prod}_{i \in I \setminus J}   \CI_{a_i}( \sigma_{i}) \right)
\end{equs}
 where 
\begin{equs}
\tilde{\Delta}_{\text{\tiny{DCK}}}   \CI_{a}( \sigma) =  \Delta_{\text{\tiny{DCK}}} \CI_{a}(\sigma) - \CI_{a}(\sigma) \otimes \one.
\end{equs} 
One can match in the dual the factors of the previous two identities by using \eqref{product_delta}. Indeed, one has
\begin{equs}
 \langle & \tau_1 \, \bar  \curvearrowright \,  \tau_2  ,  \tau_3 \rangle  = \sum_{J \subset I} \langle   \sum_{\ell_i \in\N^{d+1},}{k\choose (\ell_i)_{i \in J}}
X^{k-\sum_{i \in J} \ell_i}  \left(\prod_{i \in J} \mathcal{I}_{a_i-\ell_i}( \sigma_i) \right) \hattimes \\ &  \left( \widetilde{\prod}_{i \in I \setminus J} \mathcal{I}_{a_i}( \sigma_i) \, \wh \curvearrowright \, 
   \tau_0 \right) , \Delta  \tau_3 \rangle
  \\ & =  \sum_{J \subset I} \langle   \left( \widetilde{\prod}_{i \in J} \mathcal{I}_{a_i}(\sigma_i)  \hattimes X^{k} \right) \hattimes   \left( \widetilde{\prod}_{i \in I \setminus J} \mathcal{I}_{a_i}( \sigma_i) \, \hattimes \, 
   \tau_0 \right) , \left(\bar{\Delta}_{\text{\tiny{DCK}}}^{\root}  \hattimes \tilde{\Delta}_{\text{\tiny{DCK}}} \right) \Delta  \tau_3 \rangle
  \\ & = \sum_{J \subset I} \langle   \left( \widetilde{\prod}_{i \in J} \mathcal{I}_{a_i}( \sigma_i)  \hattimes    \widetilde{\prod}_{i \in I \setminus J} \mathcal{I}_{a_i}( \sigma_i) \, \hattimes \, 
  X^k  \tau_0 \right) , \mathcal{M}^{(1)(3)(24)}\left(\bar{\Delta}_{\text{\tiny{DCK}}}^{\root}  \hattimes \tilde{\Delta}_{\text{\tiny{DCK}}} \right) \Delta \bar \tau_3 \rangle
  \\ & =  \langle   \left( \Delta \widetilde{\prod}_{i \in I} \mathcal{I}_{a_i}( \sigma_i)   \, \hattimes \, 
  X^k  \tau_0 \right) , \mathcal{M}^{(1)(3)(24)}\left(\bar{\Delta}_{\text{\tiny{DCK}}}^{\root}  \hattimes \tilde{\Delta}_{\text{\tiny{DCK}}} \right) \Delta \bar \tau_3 \rangle
  \\ & =  \langle   \left(  \widetilde{\prod}_{i \in I} \mathcal{I}_{a_i}( \sigma_i)   \, \hattimes \, 
  X^k  \tau_0 \right) , \mathcal{M}^{(13)(24)}\left(\bar{\Delta}_{\text{\tiny{DCK}}}^{\root}  \hattimes \tilde{\Delta}_{\text{\tiny{DCK}}} \right) \Delta  \tau_3 \rangle
  \\ & =  \langle \tau_1  \hattimes  \tau_2 , \bar \Delta_{\text{\tiny{DCK}}}  \tau_3 \rangle
\end{equs}  
where we have also used the identity:
\begin{equs}
\mathcal{M}^{(13)(24)}\left(\bar{\Delta}_{\text{\tiny{DCK}}}^{\root}  \hattimes \tilde{\Delta}_{\text{\tiny{DCK}}} \right) \Delta = \bar \Delta_{\text{\tiny{DCK}}}.
\end{equs}
Moreover, we have applied the induction hypothesis on $ \wh \curvearrowright   $ and used that
\begin{equs} \label{root_ident}
\langle  & \sum_{\ell_i \in\N^{d+1},}{k\choose (\ell_i)_{i \in J}}
X^{k-\sum_{i \in J} \ell_i}  \left(\prod_{i \in J} \mathcal{I}_{a_i-\ell_i}( \sigma_i) \right) ,  \tau_3 \rangle \\ & = \langle  \widetilde{\prod}_{i \in J} \mathcal{I}_{a_i}( \sigma_i) \hattimes  
X^{k}   , \bar{\Delta}_{\text{\tiny{DCK}}}^{\root}  \tau_3 \rangle.
\end{equs}
For proving \eqref{root_ident}, one has to check that  the combinatorial coefficients provided by the polynomials are the same:
\begin{equs}
{k\choose (\ell_i)_{i \in J}} \langle 
X^{k-\sum_{i \in J} \ell_i}  , X^{k-\sum_{i \in J} \ell_i}  \rangle&  = \frac{k!}{(k-\sum_{i \in J} \ell_i)! \prod_{i \in J} \ell_i !} ( k- \sum_{i \in J} \ell_i  )!
\\ & = \left( \prod_{i \in J} \frac{1}{  \ell_i !}\right) k! 
= \left(\prod_{i \in J} \frac{1}{  \ell_i !}\right) \langle 
X^{k}  , X^{k}  \rangle,
\end{equs}
which concludes the proof.
\end{proof}

\subsection{Deformation of a plugging product}
\label{sec::grafting}
Suppose here that the set $V$ of vertex decorations is equal to the commutative monoid $\Omega$. We define a plugging operator $ \rhd $ on $ \CT_E^{V} $ as the following. For every $ \sigma , \tau \in \CT_E^{V} $, we set
\begin{align*} 
\sigma \rhd \tau = \sum_{v\in N_{\tau}}\sigma \rhd_v  \tau
\end{align*} 
\label{plugging_p}
where $ \sigma \rhd_v  \tau $ is the tree obtained by the identification of the root of $ \sigma $ with the node $ v $. Both decorations of the root of $\sigma$ and $v$ are simply added in this process. Below, we give an example of this operation: 
\begin{equs} \label{ex_plugging}
 \begin{tikzpicture}[scale=0.2,baseline=0.1cm]
        \node at (0,0)  [dot,label= {[label distance=-0.2em]below: \scriptsize  $ \omega  $} ] (root) {};
         \node at (2,4)  [dot,label={[label distance=-0.2em]above: \scriptsize  $ \beta $}] (right) {};
         \node at (-2,4)  [dot,label={[label distance=-0.2em]above: \scriptsize  $ \alpha $} ] (left) {};
            \draw[kernel1] (right) to
     node [sloped,below] {\small }     (root); \draw[kernel1] (left) to
     node [sloped,below] {\small }     (root);
     \node at (-1,2) [fill=white,label={[label distance=0em]center: \scriptsize  $ a $} ] () {};
    \node at (1,2) [fill=white,label={[label distance=0em]center: \scriptsize  $ b $} ] () {};
     \end{tikzpicture}   \rhd \begin{tikzpicture}[scale=0.2,baseline=0.1cm]
        \node at (0,0)  [dot,label= {[label distance=-0.2em]below: \scriptsize  $ \delta $} ] (root) {};
         \node at (0,4)  [dot,label={[label distance=-0.2em]above: \scriptsize  $ \gamma $}] (right) {};
            \draw[kernel1] (right) to
     node [sloped,below] {\small }     (root); 
     \node at (0,2) [fill=white,label={[label distance=0em]center: \scriptsize  $ c $} ] () {};
     \end{tikzpicture}   = \begin{tikzpicture}[scale=0.2,baseline=0.1cm]
        \node at (0,0)  [dot,label= {[label distance=-0.2em]below: \scriptsize  $ \omega + \delta $} ] (root) {};
         \node at (4,4)  [dot,label={[label distance=-0.2em]above: \scriptsize  $ \gamma $}] (right) {};
         \node at (0,6)  [dot,label={[label distance=-0.2em]above: \scriptsize  $ \beta $}] (center) {};
         \node at (-4,4)  [dot,label={[label distance=-0.2em]above: \scriptsize  $ \alpha $} ] (left) {};
            \draw[kernel1] (right) to
     node [sloped,below] {\small }     (root); \draw[kernel1] (left) to
     node [sloped,below] {\small }     (root);
     \draw[kernel1] (center) to
     node [sloped,below] {\small }     (root);
      \node at (2,2) [fill=white,label={[label distance=0em]center: \scriptsize  $ c $} ] () {};
     \node at (-2,2) [fill=white,label={[label distance=0em]center: \scriptsize  $ a $} ] () {};
     \node at (0,3) [fill=white,label={[label distance=0em]center: \scriptsize  $ b  $} ] () {};
     \end{tikzpicture}
     + \begin{tikzpicture}[scale=0.2,baseline=0.1cm]
        \node at (0,0)  [dot,label= {[label distance=-0.2em]below: \scriptsize  $ \delta $} ] (root) {};
         \node at (-3,3)  [dot,label={[label distance=-0.2em]left: \scriptsize  $ \omega + \gamma $} ] (left) {};
          \node at (0,7)  [dot,label={[label distance=-0.2em]right: \scriptsize  $ \beta $} ] (center) {};
          \node at (-6,7)  [dot,label={[label distance=-0.2em]right: \scriptsize  $ \alpha $} ] (centerr) {};
          \draw[kernel1] (left) to
     node [sloped,below] {\small }     (root);
      \draw[kernel1] (center) to
     node [sloped,below] {\small }     (left);
     \draw[kernel1] (centerr) to
     node [sloped,below] {\small }     (left);
     \node at (-1.5,1.5) [fill=white,label={[label distance=0em]center: \scriptsize  $ c $} ] () {};
      \node at (-1.5,5) [fill=white,label={[label distance=0em]center: \scriptsize  $ b $} ] () {};
       \node at (-4.5,5) [fill=white,label={[label distance=0em]center: \scriptsize  $ a $} ] () {};
     \end{tikzpicture}
\end{equs}
This operation is different from \eqref{grafting_ident} where only planted trees are considered with no decorations at the root. It also does not coincide with $\curvearrowright^{a,\omega}$ defined on planted trees in \eqref{curve_insertion}. For later use we decompose the plugging product $\rhd$ into
\begin{equation}\label{splitplug}
\rhd:=\rhd^\root+\rhd^\nonroot,
\end{equation}
where $\sigma\rhd^\root\tau:=\sigma\rhd_{\rho(\tau)}\tau$ is the tree obtained by plugging at the root of $\tau$.
\begin{proposition} \label{pre-lie plugging}
The space $ \CT_E^{V}  $ endowed with $ \rhd $ is a pre-Lie algebra but not free.
\end{proposition}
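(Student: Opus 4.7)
The plan for the pre-Lie identity is to expand both $(\sigma_1\rhd\sigma_2)\rhd\sigma_3$ and $\sigma_1\rhd(\sigma_2\rhd\sigma_3)$ directly from the definition as a sum over plugging positions. The key observation is that for $w\in N_{\sigma_3}$ the node set of $\sigma_2\rhd_w\sigma_3$ decomposes as
$$N_{\sigma_2\rhd_w\sigma_3}=N_{\sigma_2}\sqcup(N_{\sigma_3}\setminus\{w\}),$$
where the root of $\sigma_2$ is identified with $w$. Plugging $\sigma_1$ at a node $u\in N_{\sigma_2}$ rebuilds $(\sigma_1\rhd_u\sigma_2)\rhd_w\sigma_3$, and the double sum over such $(u,w)$ recovers $(\sigma_1\rhd\sigma_2)\rhd\sigma_3$ exactly. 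Plugging instead at $u'\in N_{\sigma_3}\setminus\{w\}$ produces $\sigma_2\rhd_w(\sigma_1\rhd_{u'}\sigma_3)$, since the two pluggings then act at distinct nodes of $\sigma_3$ and therefore commute. Putting these two contributions together, I will obtain
$$\sigma_1\rhd(\sigma_2\rhd\sigma_3)-(\sigma_1\rhd\sigma_2)\rhd\sigma_3=\sum_{\substack{w,u'\in N_{\sigma_3}\\ w\neq u'}}\sigma_2\rhd_w(\sigma_1\rhd_{u'}\sigma_3),$$
and the right-hand side is manifestly invariant under the swap $\sigma_1\leftrightarrow\sigma_2$ (relabel $w\leftrightarrow u'$), which is precisely the left pre-Lie identity.

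The main pitfall to watch will be the bookkeeping at the identified node: $w$ plays a double role after the plugging $\sigma_2\rhd_w\sigma_3$, both as an element of $N_{\sigma_3}$ and as the root of $\sigma_2$. It must be counted once, namely as the root of $\sigma_2$ (so that the case $u=\varrho_{\sigma_2}$ correctly reassembles into $(\sigma_1\rhd_{\varrho_{\sigma_2}}\sigma_2)\rhd_w\sigma_3$), and must be explicitly excluded from the ``other side'' $N_{\sigma_3}\setminus\{w\}$. Once this is set up correctly, the verification is a routine relabeling of indices; this is also where the splitting $\rhd=\rhd^{\root}+\rhd^{\nonroot}$ introduced in \eqref{splitplug} will be useful for keeping track of which summand a given plugging belongs to.

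For non-freeness, the plan is to exhibit a non-trivial relation among single-vertex trees that would be forbidden in a free pre-Lie algebra. Since $V=\Omega$ is a commutative monoid and the decorations of two identified roots are added, a direct computation gives
$$\bullet_\alpha\rhd\bullet_\beta\;=\;\bullet_{\alpha+\beta}\;=\;\bullet_\beta\rhd\bullet_\alpha$$
for all $\alpha,\beta\in\Omega$. In any free pre-Lie algebra on a set $X$ of generators, distinct $x\neq y$ in $X$ yield the two distinct (corolla-shaped) rooted trees $x\rhd y$ and $y\rhd x$, which are linearly independent, so no such equality can hold. Equivalently, the subspace spanned by $(\bullet_\alpha)_{\alpha\in\Omega}$ is a nontrivial commutative sub-pre-Lie algebra of $(\mathcal{T}_E^V,\rhd)$, whereas a free pre-Lie algebra admits no nontrivial commutative sub-pre-Lie algebra. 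This rules out freeness.
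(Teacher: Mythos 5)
Your verification of the pre-Lie identity is correct, and it is precisely the ``straightforward'' computation the paper leaves to the reader: the decomposition $N_{\sigma_2\rhd_w\sigma_3}=N_{\sigma_2}\sqcup(N_{\sigma_3}\setminus\{w\})$, the reassembly of the $N_{\sigma_2}$-terms into $(\sigma_1\rhd\sigma_2)\rhd\sigma_3$ (including the delicate case $u=\varrho_{\sigma_2}$, which works because root decorations add in the commutative monoid $\Omega$), and the remainder $\sum_{w\neq u'}\sigma_2\rhd_w(\sigma_1\rhd_{u'}\sigma_3)$, which is symmetric in $\sigma_1,\sigma_2$ since pluggings at distinct nodes of $\sigma_3$ commute. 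No objection to this half.

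The non-freeness half, however, has a genuine gap. The paper's own argument is different and intrinsic: the brace element $(\sigma_1\cdots\sigma_n)\rhd\tau$ merges the roots of the $\sigma_j$ with \emph{distinct} vertices of $\tau$, hence vanishes whenever $n>|N_\tau|$ --- a degeneracy that cannot occur in a free pre-Lie algebra. Your relation $\bullet_\alpha\rhd\bullet_\beta=\bullet_{\alpha+\beta}=\bullet_\beta\rhd\bullet_\alpha$ is true, but neither of your deductions from it closes the argument. First, the appeal to free generators misses the point: freeness of $(\CT_E^V,\rhd)$ would mean isomorphism to a free pre-Lie algebra on \emph{some} generating space, and under such an isomorphism the $\bullet_\alpha$ need not correspond to generators, so the fact that distinct generators $x\neq y$ satisfy $x\rhd y\neq y\rhd x$ says nothing about them. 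Second, your fallback claim that ``a free pre-Lie algebra admits no nontrivial commutative sub-pre-Lie algebra'' is asserted without proof and is not a standard fact; note that proportional elements always commute, $a\rhd(\lambda a)=(\lambda a)\rhd a$, so any correct version must concern subalgebras with non-vanishing products, and since a commutative pre-Lie algebra is automatically associative, the claim amounts to the non-embeddability of nonzero commutative associative algebras into free pre-Lie algebras --- plausible, but itself in need of a leading-term argument. There is a one-line repair in the spirit of your relation: the unit of $\Omega$ yields an idempotent, $\bullet_0\rhd\bullet_0=\bullet_0$ for $\Omega=\N^{d+1}$, whereas a free pre-Lie algebra is graded by the number of vertices with all components in degrees $\geq 1$, so for $a\neq 0$ with lowest nonzero component in degree $m\geq1$ the element $a\rhd a$ lives in degrees $\geq 2m$ and can never equal $a$; idempotency is intrinsic, so this also disposes of the elements-versus-generators issue. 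Alternatively, adopt the paper's vanishing-brace obstruction.
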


\begin{proof} The proof of the pre-Lie relation is straightforward and left to the reader.
The non-freeness comes from the fact that any brace element $ (\sigma_1 \cdots \sigma_n) \rhd \tau $ is
obtained by summing up all possibilities of merging the roots of the components $ \sigma_j$
with \textit{distinct} vertices of $ \tau $. It therefore vanishes whenever $n$ exceeds the number of
vertices of $\tau$.
\end{proof}

In the sequel, we will use the shorthand notation $ \ell = (\ell_1,...,\ell_n) \in (\N^{d+1})^n $ and $ |\ell| = \sum_{i} \ell_i $.
On can define a deformed version of this  plugging operator in the same spirit as before. But we graft simultaneously several edges.
We set for a tree $ \sigma $ of the form $ X^k \prod_{i=1}^{n}\mathcal{I}_{a_i}(\sigma_i) $
\label{deformed_plugging_p}
\begin{equs} \label{formula_rhd}
\sigma \, \wh{\rhd} \, \tau :=\sum_{v\in N_{\tau}}\sigma \, \wh{\rhd}_v \, \tau
\end{equs}
where $\sigma \, \wh{\rhd}_v \, \tau  $ is given by
\begin{equs} \label{formula_rhd_bis}
\sigma \, \wh{\rhd}_v \, \tau :=\sum_{\ell \in(\N^{d+1})^n}{\Labn_v \choose \ell }\left( X^{k}\prod_{i=1}^{n}\mathcal{I}_{(a_i - \ell_i)}(\sigma_i)\right)  \rhd_v(\uparrow_v^{- |\ell|} \tau).
\end{equs}
As an example, the deformation of \eqref{ex_plugging} is given by
\begin{equs} \label{ex_plugging_defor}
 \begin{tikzpicture}[scale=0.2,baseline=0.1cm]
        \node at (0,0)  [dot,label= {[label distance=-0.2em]below: \scriptsize  $ \omega  $} ] (root) {};
         \node at (2,4)  [dot,label={[label distance=-0.2em]above: \scriptsize  $ \beta $}] (right) {};
         \node at (-2,4)  [dot,label={[label distance=-0.2em]above: \scriptsize  $ \alpha $} ] (left) {};
            \draw[kernel1] (right) to
     node [sloped,below] {\small }     (root); \draw[kernel1] (left) to
     node [sloped,below] {\small }     (root);
     \node at (-1,2) [fill=white,label={[label distance=0em]center: \scriptsize  $ a $} ] () {};
    \node at (1,2) [fill=white,label={[label distance=0em]center: \scriptsize  $ b $} ] () {};
     \end{tikzpicture}   \wh\rhd \begin{tikzpicture}[scale=0.2,baseline=0.1cm]
        \node at (0,0)  [dot,label= {[label distance=-0.2em]below: \scriptsize  $ \delta $} ] (root) {};
         \node at (0,4)  [dot,label={[label distance=-0.2em]above: \scriptsize  $ \gamma $}] (right) {};
            \draw[kernel1] (right) to
     node [sloped,below] {\small }     (root); 
     \node at (0,2) [fill=white,label={[label distance=0em]center: \scriptsize  $ c $} ] () {};
     \end{tikzpicture}   =
      \sum_{\ell} {\delta \choose \ell}  \begin{tikzpicture}[scale=0.2,baseline=0.1cm]
        \node at (0,0)  [dot,label= {[label distance=-0.2em]below: \scriptsize  $ \omega + \delta- |\ell| $} ] (root) {};
         \node at (4,4)  [dot,label={[label distance=-0.2em]above: \scriptsize  $ \gamma $}] (right) {};
         \node at (0,6)  [dot,label={[label distance=-0.2em]above: \scriptsize  $ \beta $}] (center) {};
         \node at (-4,4)  [dot,label={[label distance=-0.2em]above: \scriptsize  $ \alpha $} ] (left) {};
            \draw[kernel1] (right) to
     node [sloped,below] {\small }     (root); \draw[kernel1] (left) to
     node [sloped,below] {\small }     (root);
     \draw[kernel1] (center) to
     node [sloped,below] {\small }     (root);
      \node at (2,2) [fill=white,label={[label distance=0em]center: \scriptsize  $ c $} ] () {};
     \node at (-2,2) [fill=white,label={[label distance=-1em]left: \scriptsize  $ a - \ell_1 $} ] () {};
     \node at (0,4) [fill=white,label={[label distance=0em]center: \scriptsize  $ b - \ell_2  $} ] () {};
     \end{tikzpicture}
     + \sum_{\ell} {\gamma \choose \ell}  \begin{tikzpicture}[scale=0.2,baseline=0.1cm]
        \node at (0,0)  [dot,label= {[label distance=-0.2em]below: \scriptsize  $ \delta $} ] (root) {};
         \node at (-3,3)  [dot,label={[label distance=-0.2em]left: \scriptsize  $ \omega + \gamma - |\ell| $} ] (left) {};
          \node at (0,7)  [dot,label={[label distance=-0.2em]right: \scriptsize  $ \beta $} ] (center) {};
          \node at (-6,7)  [dot,label={[label distance=-0.2em]right: \scriptsize  $ \alpha $} ] (centerr) {};
          \draw[kernel1] (left) to
     node [sloped,below] {\small }     (root);
      \draw[kernel1] (center) to
     node [sloped,below] {\small }     (left);
     \draw[kernel1] (centerr) to
     node [sloped,below] {\small }     (left);
     \node at (-1.5,1.5) [fill=white,label={[label distance=0em]center: \scriptsize  $ c $} ] () {};
      \node at (-1.5,5) [fill=white,label={[label distance=0em]center: \scriptsize  $ b - \ell_2 $} ] () {};
       \node at (-4.5,5) [fill=white,label={[label distance=-1em]left: \scriptsize  $ a - \ell_1 $} ] () {};
     \end{tikzpicture}
\end{equs}
We decompose the previous bilinear operation as:
\begin{equs} \label{root_plugging}
 \wh{\rhd}  =  \wh{\rhd}^{\root}  +  \wh{\rhd}^{\nonroot} 
\end{equs}
where $  \wh{\rhd}^{\root}  $ (resp. $  \wh{\rhd}^{\nonroot}  $) is defined the same way as $\wh{\rhd}  $ except that we consider only the root (resp. we sum over the nodes except the root). 
\begin{proposition}
The space $ \CT_E^{V}  $ endowed with $ \wh{\rhd}$ is a pre-Lie algebra but not free.
\end{proposition}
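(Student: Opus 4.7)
The plan is to verify the pre-Lie relation directly by expanding the associator
\[
A(\sigma_1,\sigma_2;\tau):=\sigma_1\,\wh\rhd\,(\sigma_2\,\wh\rhd\,\tau)-(\sigma_1\,\wh\rhd\,\sigma_2)\,\wh\rhd\,\tau
\]
via \eqref{formula_rhd}-\eqref{formula_rhd_bis} and showing that the result is symmetric in $(\sigma_1,\sigma_2)$. Write $\sigma_j=X^{k_j}\prod_{i\in I_j}\mathcal{I}_{a_{j,i}}(\sigma_{j,i})$ for $j=1,2$. Because each $\wh\rhd_v$ preserves the underlying tree shape of $\rhd_v$ and only modifies decorations, every term in the expansion of $A$ is naturally indexed by the pair of final merging positions $(v_1,v_2)$ of the two roots, together with shift tuples $\ell^{(1)}\in(\N^{d+1})^{I_1}$, $\ell^{(2)}\in(\N^{d+1})^{I_2}$ drawn from the relevant node decorations.

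The two easy cases are handled exactly as in the undeformed proof of Proposition~\ref{pre-lie plugging}. If $v_1\neq v_2$ both lie in $N_\tau$, the deformation shifts act on the disjoint decorations $\Labn_{v_1}$ and $\Labn_{v_2}$ (one is untouched by the operation producing the other), so the contribution is manifestly symmetric in $(\sigma_1,\sigma_2)$ and appears only through the first summand of $A$. If the root of $\sigma_1$ ends up plugged at an internal vertex $w\in N_{\sigma_2}\setminus\{\varrho_{\sigma_2}\}$ (or symmetrically), the shift operations commute on $\Labn_w$ and the contributions in the two summands of $A$ cancel.

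The main obstacle is the remaining case $v_1=v_2=v\in N_\tau$, which must be combined with the contribution in which $\sigma_1$ is plugged directly at the root $\varrho_{\sigma_2}$ of $\sigma_2$ before the merged tree is plugged at $v$. Expanding both sides, the coefficient of a given decorated result tree on the side $\sigma_1\,\wh\rhd_v\,(\sigma_2\,\wh\rhd_v\,\tau)$ takes the form ${\Labn_v \choose \ell^{(2)}}{k_2+\Labn_v-|\ell^{(2)}| \choose \ell^{(1)}}$, while on the side $(\sigma_1\,\wh\rhd_{\varrho_{\sigma_2}}\sigma_2)\,\wh\rhd_v\,\tau$ it takes the form $\sum_{\mu^{(1)}+\mu=\ell^{(1)}}{k_2 \choose \mu^{(1)}}{\Labn_v \choose \mu,\ell^{(2)}}$. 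The Chu-Vandermonde identity \eqref{chu-vdm}, applied with the projection summing the pair $(\mu^{(1)},\mu)$ to $\ell^{(1)}$, collapses the latter sum into an expression matching the former after reindexing, producing a combined contribution symmetric in $(\sigma_1,\sigma_2)$. Carrying out this multinomial bookkeeping, and checking that the projection conventions really match the ones in \eqref{chu-vdm}, is the technical heart of the proof.

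For non-freeness, the argument of Proposition~\ref{pre-lie plugging} transfers: since $\wh\rhd_v$ preserves the underlying tree shape of $\rhd_v$, the brace element $(\sigma_1\cdots\sigma_n)\,\wh\rhd\,\tau$ is still a linear combination of decorated trees obtained by plugging the $\sigma_i$ at distinct vertices of $\tau$ with various decoration shifts; when $n>|N_\tau|$ no such placement exists, the brace vanishes, and one obtains explicit nontrivial relations between brace elements.
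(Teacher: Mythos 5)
Your proof is correct and follows essentially the same route as the paper: you reduce the pre-Lie identity to the configuration where both trees are merged at the same node $v$ of the base tree, factor the multinomial coefficient as ${\Labn_v \choose \mu,\ell^{(2)}}={\Labn_v\choose \ell^{(2)}}{\Labn_v-|\ell^{(2)}|\choose \mu}$, and apply the Chu--Vandermonde identity to match the two sides, exactly as in the paper's computation. Your non-freeness argument (brace elements vanish once the number of plugged trees exceeds $|N_\tau|$) is also the paper's, inherited from the undeformed case.
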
 
\begin{proof}
Let $ \sigma =  X^k \prod_{i=1}^{n} \mathcal{I}_{a_i}(\sigma_i)  $, $ \tau = X^{\bar k} \prod_{i=1}^{m} \mathcal{I}_{\bar a_i}( \tau_i) $ and $ w $ elements of $ \CT_E^{V} $. We want to prove that
\begin{equs} \label{pre_lie_ident}
\sigma \, \wh{\rhd} \, (\tau \, \wh{\rhd} \, w)-(\sigma \, \wh{\rhd} \, \tau) \, \wh{\rhd} \, w=
\tau \, \wh{\rhd} (\sigma \, \wh{\rhd} \, w)-(\tau \, \wh{\rhd} \, \sigma) \, \wh{\rhd} \, w.
\end{equs}
The only difficult part is when $ \sigma $ and $ \tau $ are plugged on the same node $ v \in N_{w}$ decorated by $ \Labn_v $.
In terms of the binomial coefficient, we get from the left hand side of \eqref{pre_lie_ident}
\begin{align*}
\sum_{\ell \in (\N^{d+1})^{n}} \sum_{\bar \ell \in (\N^{d+1})^{m}}  {\Labn_v\choose \bar \ell} {\Labn_v - |\bar \ell| + \bar k \choose  \ell } - \sum_{\ell,r \in (\N^{d+1})^{n}} \sum_{\bar \ell \in (\N^{d+1})^{m}} {\Labn_v \choose \bar \ell,  r} { \bar k \choose  \ell}.
\end{align*}
On the right hand side, we get
\begin{align*}
\sum_{\ell \in (\N^{d+1})^{n}} \sum_{\bar \ell \in (\N^{d+1})^{m}}  {\Labn_v\choose  \ell} {\Labn_v - | \ell| +  k \choose \bar \ell } - \sum_{\ell \in (\N^{d+1})^{n}} \sum_{\bar \ell, \bar r \in (\N^{d+1})^{m}} {\Labn_v \choose \ell, \bar r} {  k \choose \bar \ell}. 
\end{align*}
We first notice that
\begin{align*}
{\Labn_v \choose \bar \ell,  r} = {\Labn_v  \choose \bar \ell  } {\Labn_v - | \bar \ell | \choose   r}.
\end{align*}
By using Chu-Vandermonde identity, we get
\begin{align*}
\sum_{\ell,r \in (\N^{d+1})^{n}} {\Labn_v - | \bar \ell | \choose   r}  { \bar k \choose  \ell} = \sum_{\ell \in (\N^{d+1})^{n}} {\Labn_v - | \bar \ell | + \bar k \choose   \ell} 
\end{align*}
which allows us to conclude. The non-freeness argument is similar to the one
invoked in the proof of Proposition~\ref{pre-lie plugging}.
\end{proof}

\begin{remark} One cannot repeat the proof given in theorem~\ref{main}  because the product $ \rhd $ is not free. Instead, we have used the Chu-Vandermonde identity. 
\end{remark}

We want to know if the plugging operator $ \wh{\rhd} $ can be derived using the $ \Theta $ defined in Theorem~\eqref{main} before and the operator $ \rhd $. The answer is yes but it does not come from a direct transport of structure. Considering the transported plugging operator $ \widetilde{\rhd} $ defined as
\begin{align*}
\Theta( \sigma \, \rhd \, \tau ) =  \Theta(\sigma) \, \widetilde{\rhd} \, \Theta(\tau)  
\end{align*}
we will see that it does not coincide with $ \wh{\rhd} $. Let us first give an explicit expression of the linear isomorphism $ \Theta $:
\label{theta_plugging}
\begin{proposition} \label{rec_theta} Let $ \tau = X^k \prod_{i=1}^{n} \mathcal{I}_{a_i}(\tau_i) \in \CT_E^V$, one has:
\begin{align*}
\Theta(\tau) = \sum_{\ell \in (\N^{d+1})^{n}}{ k \choose \ell} X^{k- |\ell|} \prod_{i=1}^{n}\mathcal{I}_{(a_i- \ell_i)}( \Theta(\tau_i)).
\end{align*} 
\end{proposition}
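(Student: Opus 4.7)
I would proceed by induction on the number of vertices of $\tau$. The base case $\tau=\bullet_k$ is immediate: the outer product is empty ($n=0$), so the sum over $\ell$ collapses to the single term $\bullet_k$, matching $\Theta(\bullet_\alpha)=\bullet_\alpha$ from Theorem~\ref{main}.

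For the inductive step with $\tau = X^k\prod_{i=1}^n \mathcal I_{a_i}(\tau_i)$ and $n\ge 1$, the key observation is that $\tau$ can be written as a pre-Lie brace element on the single-vertex target $\bullet_k$:
\[
\tau \;=\; (\tau_1\cdots\tau_n)\curvearrowright^{a_1\cdots a_n}\bullet_k .
\]
This follows because $\bullet_k$ has only one vertex, so the recursive unfolding of the brace (with alternating signs from the brace definition in Section~\ref{sect:def}) produces exactly the trees in which each $\tau_i$ is grafted onto the root of $\bullet_k$; non-root cross-configurations are cancelled by the subtraction terms in the brace recursion. Applying $\Theta$ and using that $\Theta$ is a morphism of $E$-multiple pre-Lie algebras (so it intertwines brace operations as well), one obtains
\[
\Theta(\tau) \;=\; (\Theta(\tau_1)\cdots\Theta(\tau_n))\,\wh{\curvearrowright}^{a_1\cdots a_n}\,\bullet_k .
\]

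Then I would compute this deformed brace. Unfolding it recursively with $\wh{\curvearrowright}^{a_i}$, at each step the grafting onto the (current) root $\bullet_{k-|\ell|}$ produces, by \eqref{deformation_preLie}, a Taylor expansion weighted by $\binom{k-|\ell|}{\ell_i}$, whereas graftings at non-root vertices cancel against the subtraction terms of the brace. The surviving terms correspond exactly to the configurations where every $\Theta(\tau_i)$ is plugged below a fresh edge at the root, and the accumulated weight is the telescoping product
\[
\binom{k}{\ell_1}\binom{k-\ell_1}{\ell_2}\cdots\binom{k-\sum_{i<n}\ell_i}{\ell_n}
\;=\;\binom{k}{\ell_1,\ldots,\ell_n},
\]
which is the special case of the Chu-Vandermonde identity \eqref{chu-vdm} with $S=\{1,\ldots,n\}$, $\widetilde S$ a singleton, and $\pi$ constant. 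By the inductive hypothesis $\Theta(\tau_i)$ already has the claimed shape, so reassembling gives precisely
\[
\Theta(\tau)\;=\;\sum_{\ell\in(\N^{d+1})^n}\binom{k}{\ell}\,X^{k-|\ell|}\prod_{i=1}^n \mathcal I_{a_i-\ell_i}(\Theta(\tau_i)),
\]
as claimed.

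The main obstacle, in my view, is the bookkeeping of the cross-term cancellations: one must check simultaneously that (i) the non-root grafting positions coming from each $\wh{\curvearrowright}^{a_i}$ in the brace unfolding cancel, so that only root-attachments survive, and (ii) the nested binomials from successive Taylor expansions combine via Chu-Vandermonde into the single multinomial coefficient $\binom{k}{\ell_1,\ldots,\ell_n}$. Both points are standard once the brace calculus on a single-vertex target is set up carefully, but some care is needed because the Taylor deformation affects the root decoration at each grafting step.
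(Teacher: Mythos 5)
Your argument is correct and is essentially the paper's own proof: the paper likewise writes $\tau=(\tau_1\cdots \tau_n)\curvearrowright^{a_1\cdots a_n}X^k$, uses that $\Theta$ intertwines brace elements (an immediate consequence of Theorem~\ref{main}, since braces are built recursively from the binary products), and evaluates the deformed brace on the single-vertex target to obtain the multinomial coefficient. Two cosmetic points only: the induction on the number of vertices is superfluous, since the stated formula is already recursive in the $\Theta(\tau_i)$, and the telescoping product ${k\choose \ell_1}{k-\ell_1\choose \ell_2}\cdots{k-\sum_{i<n}\ell_i\choose \ell_n}={k\choose \ell_1,\ldots,\ell_n}$ is simply the factorial definition of the multinomial coefficient rather than an instance of Chu--Vandermonde, which is a convolution (sum) identity and is needed elsewhere, namely for the pre-Lie property of $\wh\rhd$.
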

\begin{proof}
 One has
$$\tau=(\tau_1\cdots \tau_n)\curvearrowright^{a_1\cdots a_n}X^k,$$
hence we easily get
\begin{eqnarray*}
\Theta(\tau)&=&\big(\Theta(\tau_1)\cdots\Theta(\tau_n)\big)\wh\curvearrowright^{a_1\cdots a_n}\Theta(X^k)\\
&=&\big(\Theta(\tau_1)\cdots\Theta(\tau_n)\big)\wh\curvearrowright^{a_1\cdots a_n}X^k\\
&=& \sum_{\ell \in (\N^{d+1})^{n}}{ k \choose \ell} X^{k- |\ell|} \prod_{i=1}^{n}\mathcal{I}_{(a_i- \ell_i)}\Big( \Theta(\tau_i)\Big)\hskip 8mm  \hbox{by \eqref{deformation_curve} and \eqref{extension_theta}}.
\end{eqnarray*}
\end{proof}

In the next proposition, we give an explicit expression of the product $ \widetilde{\rhd}^{\root} $ defined by
$$\Theta(\sigma\rhd^\root\tau)=\Theta(\sigma) \widetilde{\rhd}^{\root} \Theta(\tau).$$
Similarly to \eqref{splitplug} one has then:
\begin{align*} \label{root_plugging_t}
 \widetilde{\rhd}  =  \widetilde{\rhd}^{\root}  +  \widetilde{\rhd}^{\nonroot}.
\end{align*}

\begin{proposition}
Let $ \sigma =  X^k \prod_{i=1}^{n} \mathcal{I}_{a_i}(\sigma_i) $ and $ \tau = X^{\bar k} \prod_{i=1}^{\bar n} \mathcal{I}_{\bar a_i}( \tau_i) \in \CT_E^V$ , one has
\begin{equs} \label{root_formula}
\sigma \, \widetilde{\rhd}^{\root} \, \tau = & \sum_{\ell \in (\N^{d+1})^{n}} \sum_{\bar \ell \in (\N^{d+1})^{\bar n}} { \bar k \choose \ell }  {  k \choose \bar \ell} \\ & X^{k + \bar k - |\ell|-|\bar \ell|} \prod_{i=1}^{n}\mathcal{I}_{(a_i- \ell_i)}(\sigma_i)  \prod_{i=1}^{\bar n} \mathcal{I}_{(\bar a_i - \bar \ell_i)}(\tau_i)   .
\end{equs}
\end{proposition}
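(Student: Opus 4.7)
My plan is to verify \eqref{root_formula} by a direct check of its defining identity $\Theta(\sigma \rhd^{\root} \tau) = \Theta(\sigma)\,\widetilde{\rhd}^{\root}\, \Theta(\tau)$. Both sides will be expanded into explicit linear combinations of monomial trees using Proposition~\ref{rec_theta}, and the resulting combinatorial sums identified via the Chu--Vandermonde identity~\eqref{chu-vdm}.

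For the left-hand side, $\sigma \rhd^{\root} \tau = X^{k+\bar k}\prod_{i=1}^n \mathcal{I}_{a_i}(\sigma_i) \prod_{i=1}^{\bar n} \mathcal{I}_{\bar a_i}(\tau_i)$ is again a tree of the form treated in Proposition~\ref{rec_theta}, with $n+\bar n$ branches and root decoration $k+\bar k$, so a single application yields
\[
\Theta(\sigma \rhd^{\root} \tau) = \sum_{\ell, \bar \ell}\binom{k + \bar k}{\ell, \bar \ell}\, X^{k + \bar k - |\ell| - |\bar\ell|} \prod_i \mathcal{I}_{a_i - \ell_i}(\Theta(\sigma_i)) \prod_i \mathcal{I}_{\bar a_i - \bar\ell_i}(\Theta(\tau_i)).
\]
For the right-hand side, I will expand $\Theta(\sigma)$ and $\Theta(\tau)$ via Proposition~\ref{rec_theta}, producing sums indexed by $\ell_0 \in (\N^{d+1})^n$ and $\bar\ell_0 \in (\N^{d+1})^{\bar n}$, and apply the candidate formula \eqref{root_formula} bilinearly term by term. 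This produces a quadruple sum over $(\ell_0, \ell_1, \bar\ell_0, \bar\ell_1)$ with combined weight $\binom{k}{\ell_0}\binom{\bar k}{\bar\ell_0}\binom{\bar k - |\bar\ell_0|}{\ell_1}\binom{k - |\ell_0|}{\bar\ell_1}$, the branch shifts being $\ell_{0,i} + \ell_{1,i}$ on $\sigma$'s side and $\bar\ell_{0,i} + \bar\ell_{1,i}$ on $\tau$'s side.

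Setting $\mu := \ell_0 + \ell_1$ and $\bar\mu := \bar\ell_0 + \bar\ell_1$ makes the monomial trees on both sides coincide, and it remains to equate their coefficients. Using the elementary factorizations $\binom{k}{\ell_0}\binom{k - |\ell_0|}{\bar\ell_1} = \binom{k}{\ell_0, \bar\ell_1}$ and $\binom{\bar k}{\bar\ell_0}\binom{\bar k - |\bar\ell_0|}{\ell_1} = \binom{\bar k}{\bar\ell_0, \ell_1}$, the coefficient of the $(\mu,\bar\mu)$-term on the right collapses to
\[
\sum_{\substack{\ell_0 + \ell_1 = \mu \\ \bar\ell_0 + \bar\ell_1 = \bar\mu}} \binom{k}{\ell_0, \bar\ell_1}\binom{\bar k}{\bar\ell_0, \ell_1},
\]
which, by the Chu--Vandermonde identity~\eqref{chu-vdm} applied to the projection that forgets whether a distributed part originates from $k$ or from $\bar k$, equals the multinomial $\binom{k+\bar k}{\mu, \bar\mu}$ and matches the left-hand side. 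The main obstacle is the multivariate bookkeeping: all indices live in $(\N^{d+1})^{n+\bar n}$, and the cross structure of \eqref{root_formula}---edges of $\sigma$ weighted by the root decoration $\bar k$ of $\tau$ and vice versa---must be recognised precisely as the natural output of Chu--Vandermonde on the union of the two families of branches; once the quadruple sum is rearranged as above, the identification is immediate.
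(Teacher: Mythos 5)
Your proposal is correct and follows essentially the same route as the paper's own proof: expand $\Theta(\sigma\rhd^{\root}\tau)$ in one application of Proposition~\ref{rec_theta}, expand the right-hand side into a quadruple sum via the candidate formula, perform the change of variables (your $\mu=\ell_0+\ell_1$, $\bar\mu=\bar\ell_0+\bar\ell_1$ is the paper's $r=\ell+m$, $\bar r=\bar\ell+\bar m$), and collapse the coefficient with the same multinomial factorizations and the Chu--Vandermonde identity \eqref{chu-vdm}. Your framing of Chu--Vandermonde via the projection forgetting whether a part originates from $k$ or $\bar k$ is exactly the intended use, so there is nothing to add.
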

\begin{proof}The proof relies on the recursive definition of  $ \Theta $ given in Proposition~\ref{rec_theta}. Indeed, one has:
\begin{align*}
\Theta( \sigma \rhd^{\root} \tau ) = & \sum_{\ell, \bar \ell} { \bar k + k \choose \ell,\bar \ell}   X^{k + \bar k - |\ell |- |  \bar \ell|} \prod_{i=1}^{n}\mathcal{I}_{(a_i- \ell_i)}\Big(\Theta(\sigma_i)\Big)  \prod_{i=1}^{\bar n} \mathcal{I}_{(\bar a_i - \bar \ell_i)}\Big(\Theta(\tau_i) \Big)   .
\end{align*}
On the other hand, from \eqref{root_formula} one has
\begin{align*}
 \Theta(\sigma) & \, \widetilde{\rhd}^{\root} \, \Theta(\tau)  =  \sum_{\ell, \bar \ell, m, \bar m}  {  k  \choose \ell}  {  \bar k  \choose \bar \ell} { k  - | \ell |  \choose  \bar m } {  \bar k  - |\bar \ell |  \choose m} \\ & X^{k + \bar k - | \ell| - | \bar \ell| - |m|  - |  \bar m| } \prod_{i=1}^{n}\mathcal{I}_{(a_i- \ell_i- m_i)}\Big(\Theta(\sigma_i)\Big)  \prod_{i=1}^{\bar n} \mathcal{I}_{(\bar a_i - \bar \ell_i-  \bar m_i)}\Big(\Theta( \tau_i) \Big).
\end{align*}
Now, we perform the change of variable $ r_i = \ell_i +  m_i $ and $ \bar r_i = \bar \ell_i +  \bar m_i $:
\begin{align*}
 \Theta(\sigma) \, \widetilde{\rhd}^{\root} \, \Theta(\tau) & =  
 \sum_{\ell, \bar \ell, r, \bar r}  {  k  \choose \ell}  {  \bar k  \choose \bar \ell} { k  - | \ell|  \choose  \bar r- \bar \ell } {  \bar k  - |\bar \ell|  \choose  r -  \ell} \\ & X^{k + \bar k - | r|- | \bar r|} \prod_{i=1}^{n}\mathcal{I}_{(a_i- r_i)}\Big(\Theta(\sigma_i)\Big)  \prod_{i=1}^{\bar n} \mathcal{I}_{(\bar a_i - \bar r_i)}\Big(\Theta(\tau_i)\Big).  
\end{align*}
 We fix $ r $ and $ \bar r $ and by the Chu-Vandermonde identity, we get:
 \begin{align*}
 \sum_{\ell, \bar \ell}  {  k  \choose \ell}  {  \bar k  \choose \bar \ell} { k  - | \ell | \choose  \bar r- \bar \ell } {  \bar k  - | \bar \ell|  \choose  r -  \ell} = \sum_{\ell, \bar \ell}  {  k  \choose \bar r - \bar \ell, \ell}  { \bar k  \choose  \bar \ell, r- \ell}  ={ \bar k + k  \choose  \bar r,  r} . 
 \end{align*}
 Then we conclude from the previous identity that:
 \begin{align*}
 \Theta( \sigma \, \rhd^{\root} \, \tau ) =  \Theta(\sigma) \, \widetilde{\rhd}^{\root} \, \Theta(\tau).
 \end{align*}
\end{proof}

\begin{remark} One can derive a formula for $\sigma \,  \widetilde{\rhd}_{v} \, \tau $ when $ v $ is not the root of $ \tau $. It will be similar to \eqref{formula_rhd}  and it will be symmetric like $ \widetilde{\rhd}^{\root} $  when it turns out to modify the edge decoration.
\end{remark}

The next propostion reveals the main difference between $ \wh \rhd $ and $\widetilde{\rhd}  $ which can be observed at the root.

\begin{proposition} \label{not_commute}
The operators  $ \rhd, \wh \rhd, \widetilde{\rhd} $ satisfy the following properties:
\begin{align*}
\sigma \, \rhd^{\root} \tau =  \tau \rhd^{\root} \sigma, \quad \sigma \, \widetilde{\rhd}^{\root} \tau =  \tau \, \widetilde{\rhd}^{\root} \sigma, \quad \sigma \, \wh\rhd^{\root}  \tau \neq  \tau \, \wh \rhd^{\root} \sigma.
\end{align*} 
\end{proposition}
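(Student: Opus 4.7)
The plan is to dispose of the first two identities by direct inspection of the defining formulae and to establish the third (non-)equality by exhibiting an explicit counterexample; no technical input beyond the definitions of Section~\ref{sec::grafting} is needed. For $\sigma\,\rhd^\root\tau = \tau\,\rhd^\root\sigma$ one simply reads the definition of $\rhd^\root$: plugging at the root amounts to identifying the two roots and summing their decorations in the commutative monoid $\Omega$, an operation manifestly symmetric in $\sigma$ and $\tau$.

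For the second identity I would invoke the closed formula \eqref{root_formula}. All four ingredients on the right-hand side --- the joint summation over $\ell\in(\N^{d+1})^n$ and $\bar\ell\in(\N^{d+1})^{\bar n}$, the binomial weight $\binom{\bar k}{\ell}\binom{k}{\bar\ell}$, the root monomial $X^{k+\bar k-|\ell|-|\bar\ell|}$, and the (commutative) product $\prod_{i=1}^n \mathcal{I}_{a_i-\ell_i}(\sigma_i)\prod_{i=1}^{\bar n}\mathcal{I}_{\bar a_i-\bar\ell_i}(\tau_i)$ --- transform into their counterparts under the swap $(\sigma,k,n,a_i,\sigma_i)\leftrightarrow(\tau,\bar k,\bar n,\bar a_i,\tau_i)$, so the expression is symmetric.

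For the non-equality, the strategy is to pick $\tau$ with no outgoing edges so that the sum defining $\tau\,\wh\rhd^\root\sigma$ collapses to the undeformed term, while $\sigma\,\wh\rhd^\root\tau$ retains a non-trivial deformation. A minimal choice is $\sigma := \mathcal{I}_a(X^0)$ (so $k=0$ at the root, one branch with edge decoration $a\in E$ with $a \neq 0$) and $\tau := X^\Labn$ with $\Labn\in\Omega$ satisfying $0 < \Labn \le a$ componentwise. Then \eqref{formula_rhd_bis} yields $\tau\,\wh\rhd^\root\sigma = \tau\rhd^\root\sigma = X^\Labn\,\mathcal{I}_a(X^0)$ (empty product of summation indices), whereas $\sigma\,\wh\rhd^\root\tau$ contains, alongside the leading term $X^\Labn\,\mathcal{I}_a(X^0)$, the strictly-lower-grading contributions $\binom{\Labn}{\ell_1}\,X^{\Labn-\ell_1}\,\mathcal{I}_{a-\ell_1}(X^0)$ for $0<\ell_1\le\Labn$; these are linearly independent from the leading term and cannot cancel, giving the desired inequality.

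The sole conceptual point, and the only mild obstacle, is simply to notice the structural asymmetry in the definition of $\wh\rhd^\root$: the binomial factors always involve the root decoration of the \emph{receiving} tree and deform the edges of the \emph{plugged-in} tree, so swapping the two arguments genuinely changes the formula --- in stark contrast with $\rhd^\root$ (decorations add symmetrically) and with $\widetilde\rhd^\root$, whose symmetric form \eqref{root_formula} is produced precisely by the conjugation by $\Theta$.
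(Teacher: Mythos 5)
Your proof is correct and follows essentially the same route as the paper's: symmetry of $\rhd^{\root}$ straight from the definition, symmetry of $\widetilde{\rhd}^{\root}$ read off from \eqref{root_formula} (which the paper instead phrases as transporting the first identity through $\Theta$ --- the same content, since \eqref{root_formula} was derived precisely via $\Theta$), and failure of symmetry for $\wh\rhd^{\root}$ by explicit computation. Your counterexample $\sigma=\mathcal{I}_a(X^0)$, $\tau=X^{\Labn}$ with $0<\Labn\le a$ checks out --- $\tau\,\wh\rhd^{\root}\sigma$ collapses to the undeformed term $X^{\Labn}\mathcal{I}_a(X^0)$ while $\sigma\,\wh\rhd^{\root}\tau$ carries the extra terms $\binom{\Labn}{\ell_1}X^{\Labn-\ell_1}\mathcal{I}_{a-\ell_1}(X^0)$ --- and usefully supplies the computation the paper leaves implicit.
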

\begin{proof}
For $ \rhd^{\root} $, the property follows from its definition. Then $ \Theta $ transports this structure to $ \widetilde{\rhd}^{\root} $. But with an explicit computation, one can check that $ \sigma \, \wh\rhd^{\root} \, \tau \neq  \tau \, \wh \rhd^{\root} \sigma  $.
\end{proof}

\begin{remark}
The main consequence  of Proposition~\ref{not_commute} is that $ \wh \rhd^{\root}  $ is not directly obtained from $ \Theta $ which makes its construction more subtle. Proposition~\ref{insertion_poly} shows that one needs to take into account insertion of polynomials after the plugging.
\end{remark}

\begin{proposition} \label{insertion_poly}
Let $ \sigma, \tau \in \CT_E^V $,
one has for every $ v \in N_{\tau} $
\begin{equs} \label{def_plugging}
\sigma \, \wh{\rhd}_{v} \, \tau = \uparrow^{n_{\sigma}}_{v} \left( \Pi \sigma \, \widetilde{\rhd}_{v} \,  \tau \right)
\end{equs}
where $ \Pi $ sets to zero the node decoration equal to $ n_{\sigma} $ at the root of $ \sigma $. 
\end{proposition}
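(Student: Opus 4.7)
The plan is a direct verification using explicit formulas: I would first check the identity when $v$ is the root of $\tau$ by an elementary computation, and then extend to arbitrary $v$ by induction on the depth of $v$ in $\tau$.

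For the base case $v = \varrho_\tau$, write $\sigma = X^k \prod_{i=1}^n \mathcal{I}_{a_i}(\sigma_i)$ and $\tau = X^{\bar k} \prod_{i=1}^{\bar n} \mathcal{I}_{\bar a_i}(\tau_i)$, so that $n_\sigma = k$ and $\Pi\sigma = \prod_i \mathcal{I}_{a_i}(\sigma_i)$. Substituting $\Pi\sigma$ (root decoration $0$) into the explicit formula \eqref{root_formula} makes the factor $\binom{k}{\bar\ell}$ become $\binom{0}{\bar\ell} = \prod_s \delta_{\bar\ell_s,0}$, collapsing the sum over $\bar\ell$ to its zero term and yielding
\begin{equs}
\Pi\sigma \, \widetilde{\rhd}^{\root} \, \tau = \sum_{\ell} \binom{\bar k}{\ell} X^{\bar k - |\ell|} \prod_i \mathcal{I}_{a_i - \ell_i}(\sigma_i) \prod_i \mathcal{I}_{\bar a_i}(\tau_i).
\end{equs}
Applying $\uparrow^{n_\sigma}_{\varrho_\tau} = \uparrow^k_{\varrho_\tau}$ adds $k$ back to the root decoration, and the resulting expression coincides term by term with $\sigma \, \wh{\rhd}^{\root} \, \tau$ as obtained by unfolding \eqref{formula_rhd_bis}.

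For the inductive step, write $\tau = X^{\bar k} \prod_i \mathcal{I}_{\bar a_i}(\tau_i)$ and let $j$ be the unique index with $v \in N_{\tau_j}$. The left-hand side factorises as
\begin{equs}
\sigma \, \wh{\rhd}_v \, \tau = X^{\bar k} \mathcal{I}_{\bar a_j}\bigl(\sigma \, \wh{\rhd}_v \, \tau_j\bigr) \prod_{i \neq j} \mathcal{I}_{\bar a_i}(\tau_i),
\end{equs}
because $\uparrow^{-|\ell|}_v$ only modifies the decoration at $v$, which lies strictly inside $\tau_j$. The task is then to establish the analogous factorisation for the right-hand side. Unfolding $\Theta(\sigma \rhd_v \tau)$ via Proposition~\ref{rec_theta} reveals that $\widetilde{\rhd}_v$ a priori spreads the redistribution of $n_\sigma = k$ along the whole path from $\varrho_\tau$ down to $v$. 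The crucial observation is that $\Pi$ sets $k \mapsto 0$ and therefore kills every such redistribution via $\binom{0}{\cdot} = \delta_{0,\cdot}$; once this cancellation is pushed through, $\Pi\sigma \, \widetilde{\rhd}_v \, \tau$ inherits the same subtree factorisation as the left-hand side, and the induction hypothesis applied to $\tau_j$ closes the step.

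The main obstacle will be the bookkeeping in this second factorisation: several nested binomial factors arise as $\Theta$ propagates through the path joining $\varrho_\tau$ to $v$, and one must check that the vanishing imposed by $n_\sigma = 0$ propagates cleanly through all of them. This is exactly the kind of collapse that the Chu--Vandermonde identity recalled in Paragraph~\ref{chuvdm} is designed to handle; it has already played an analogous role in establishing the pre-Lie property of $\wh{\rhd}$, so I expect the same mechanism to carry the inductive step through.
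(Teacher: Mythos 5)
Your proposal is correct in substance, but it is organised quite differently from the paper's proof, which is essentially two lines. The paper factors the statement through two structural observations: first, in the defining formula \eqref{formula_rhd_bis} none of the binomial coefficients involves the root decoration $k = n_{\sigma}$ of $\sigma$ --- it is merely transported to $v$ by the plugging --- so the identity $\sigma \, \wh{\rhd}_{v} \, \tau = \uparrow^{n_{\sigma}}_{v} \left( \Pi \sigma \, \wh{\rhd}_{v} \, \tau \right)$ holds tautologically; second, when $\sigma$ has zero root decoration one has $\sigma \, \wh{\rhd}_{v} \, \tau = \sigma \, \widetilde{\rhd}_{v} \, \tau$, which for $v = \varrho_{\tau}$ is exactly your base-case computation (the collapse ${0 \choose \bar\ell} = \delta_{\bar\ell,0}$ in \eqref{root_formula}), and which for general $v$ the paper delegates to the remark on the non-root analogue of \eqref{root_formula}. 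Your induction on the depth of $v$, with the subtree factorisation of both sides, is precisely a way of supplying that non-root case in full; so your route is more self-contained at the cost of length, while the paper buys brevity by isolating the inertness of $n_{\sigma}$ as a separate identity --- something your base case verifies only implicitly, term by term.

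One point in your inductive step should be repaired, although it does not derail the argument. The nested binomials along the path from $\varrho_{\tau}$ to $v$ redistribute the node decorations of $\tau$ (via Proposition~\ref{rec_theta} applied inside $\Theta^{-1}(\tau)$ and again in the outer $\Theta$), not $n_{\sigma}$: by \eqref{rec_theta} each node's decoration spreads only onto the edges to its children, so $n_{\sigma}$, deposited at $v$ by the plugging, can only spread onto edges below $v$, never along the path above it. Consequently the cancellation along that path is not produced by $\Pi$; it holds for arbitrary $\sigma$, because the plugging at $v$ leaves the decorations of all nodes strictly above $v$ unchanged, so the $\Theta^{-1}$- and $\Theta$-redistributions above $v$ compose to the identity, yielding the factorisation $\sigma \, \widetilde{\rhd}_{v} \, \tau = X^{\bar k}\, \mathcal{I}_{\bar a_j}\bigl(\sigma \, \widetilde{\rhd}_{v} \, \tau_j\bigr) \prod_{i \neq j} \mathcal{I}_{\bar a_i}(\tau_i)$ with no Chu--Vandermonde input beyond $\Theta \circ \Theta^{-1} = \id$. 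What $\Pi$ (that is, $n_{\sigma} = 0$) actually buys is confined to the node $v$ itself: it kills the deformation of $\tau$'s data by $\sigma$'s root decoration, which is what makes $\widetilde{\rhd}_{v}$ and $\wh{\rhd}_{v}$ agree in your base case (and is the very point where $\wh{\rhd}^{\root}$ loses the symmetry of $\widetilde{\rhd}^{\root}$, cf.\ Proposition~\ref{not_commute}). With the bookkeeping attributed this way, your induction closes exactly as you describe.
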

\begin{proof} When $ \sigma $ has zero node decoration from \eqref{root_formula}, one has
\begin{equs}
\sigma \, \wh{\rhd}_{v} \, \tau =   \sigma \, \widetilde{\rhd}_{v} \,  \tau.
\end{equs}
Then, we conclude from \eqref{formula_rhd} which gives
\begin{equs}
\sigma \, \wh{\rhd}_{v} \, \tau = \uparrow^{n_{\sigma}}_{v} \left( \Pi \sigma \, \wh{\rhd}_{v} \,  \tau \right).
\end{equs}
\end{proof}

\begin{remark}
The identity \eqref{def_plugging} allows us to write:
\begin{equs}
\sigma \, \wh{\rhd}_{v} \, \tau & = \uparrow^{n_{\sigma}}_{v} \left( \Pi \sigma \, \widetilde{\rhd}_{v} \,  \tau \right) = \uparrow^{n_{\sigma}}_{v} \Theta^{-1}\left( \Theta (\Pi \sigma) \, \rhd_{v} \,  \Theta(\tau) \right) 
\\ & =\Theta^{-1} \, \hat \uparrow^{n_{\sigma}}_{v} \left( \Theta (\Pi \sigma) \, \rhd_{v} \,  \Theta(\tau) \right)
\end{equs}
where $  \hat \uparrow^{n_{\sigma}}_{v}  $ is given as  in \eqref{deformed_uparrow}.
\end{remark}

As for $ \wh{\curvearrowright} $, we apply the Guin-Oudom construction to the pre-Lie product $  \wh{\rhd} $ on the space $\mathcal{T}_E^V$. 
We  now consider the symmetric algebra $S( \mathcal{T}_E^V )$ with its usual coproduct $\Delta$ defined for every $ \tau \in \mathcal{T}_E^V $ by:
\begin{equs} \label{delta_trees}
\Delta \tau &=\tau \otimes \one + \one \otimes \tau. 
\end{equs}
On $ S(\mathcal{T}_E^V) $, we identify single node with no decorations as the empty forest.  We obtain a product $ \bullet_2 $ on $ S(\mathcal{T}_E^V) $ \label{forest_f} defined as in \eqref{e::symmetric product}. One can notice that for $ x \in S(\mathcal{T}_E^V) $, $ y \in \mathcal{T}_E^V $:
\begin{equs}
x \bullet_2 y = 0 
\end{equs}
when the number of trees in $ x $ is larger than the number of nodes in 
$ y $. \label{star_2}
Then, we define the following product $ \star_2 $ as:
\begin{equs}
w \star_2 v = \sum_{(w)} \left(  (w^{(1)} \bullet_2 v) w^{(2)} \right). 
\end{equs}

We denote by $ \mathfrak{g}_2 $ the Lie algebra $ (\mathcal{T}_E^V, \left[ \cdot \right]) $ where the bracket is given for $ \sigma, \tau \in \CT_E^V $ by:
\begin{equs}
\left[ \sigma,  \tau  \right] = \sigma \, \wh \rhd \,  \tau -  \tau \, \wh \rhd \,  \sigma.
\end{equs}

\begin{proposition}
The space $ (S(\mathcal{T}_E^V ), \star_2, \Delta) $ is a Hopf algebra isomorphic to the enveloping algebra $ \CU(\mathfrak{g}_2) $  where the Lie algebra $ \mathfrak{g}_2 $  is $ \mathcal{T}_E^V $ 
endowed with the
antisymmetrization of the pre-Lie product $\wh \rhd$.
\end{proposition}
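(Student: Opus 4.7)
The plan is to invoke the Guin--Oudom theorem (\cite[Theorem 2.12]{Guin2}) directly, in complete parallel with the proof of the analogous statement for $(S(\mathcal{P}_E^V),\star_0,\Delta)$ given earlier in this section for the pre-Lie algebra $(\mathcal{P}_E^V,\wh\curvearrowright)$. The only fact specific to our setting that needs to be put in place is that $(\mathcal{T}_E^V,\wh\rhd)$ is a pre-Lie algebra, and this was precisely the content of the preceding proposition (via the Chu--Vandermonde identity). Once that is granted, no further tree-specific argument is required.

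First I would check that the construction of $\bullet_2$ and $\star_2$ on $S(\mathcal{T}_E^V)$ is literally the Guin--Oudom extension: the recursive clauses defining $\bullet_2$ (the analogue of \eqref{e::symmetric product}, with $\wh\rhd$ in place of $\wh\curvearrowright$ and $\mathcal{T}_E^V$ in place of $\mathcal{P}_E^V$) together with the formula $w \star_2 v = \sum_{(w)}(w^{(1)}\bullet_2 v)\,w^{(2)}$ coincide with the standard Guin--Oudom recipe, and $\Delta$ on $S(\mathcal{T}_E^V)$ is the unshuffle coproduct, for which every generator is primitive. One should note that the recursive definition of $\bullet_2$ yields finite sums at each stage, thanks to the observation made just before the statement that $x\bullet_2 y=0$ whenever the number of trees in $x$ exceeds the number of nodes of $y$; so all formulas are well defined without needing to pass to the $\hattimes$-completion at this level.

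Then I would apply \cite[Theorem 2.12]{Guin2} verbatim: for any pre-Lie algebra $(\mathfrak{g},\wh\rhd)$, the symmetric algebra $S(\mathfrak{g})$ equipped with the Guin--Oudom associative product and the unshuffle coproduct is a Hopf algebra isomorphic to $\mathcal{U}(\mathfrak{g}_{\mathrm{Lie}})$, where $\mathfrak{g}_{\mathrm{Lie}}$ denotes $\mathfrak{g}$ endowed with the antisymmetrization bracket $[x,y]=x\wh\rhd y - y\wh\rhd x$. Specializing to $\mathfrak{g}=\mathcal{T}_E^V$ gives exactly the claimed isomorphism with $\mathcal{U}(\mathfrak{g}_2)$.

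There is no genuine obstacle here beyond the pre-Lie verification already handled: the antisymmetrization automatically yields a Lie bracket (this is the general pre-Lie $\Rightarrow$ Lie implication), compatibility of $\star_2$ with $\Delta$ is built into the Guin--Oudom construction, and the isomorphism with $\mathcal{U}(\mathfrak{g}_2)$ can be realized explicitly by the symmetrization map $x_1\cdots x_n \mapsto \tfrac{1}{n!}\sum_{\sigma\in\mathfrak{S}_n} x_{\sigma(1)}\star_2\cdots\star_2 x_{\sigma(n)}$, which is the standard PBW-type isomorphism in the Guin--Oudom framework.
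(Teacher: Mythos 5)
Your proposal is correct and takes exactly the paper's (implicit) route: the paper states this proposition without a separate proof, relying---just as in the $\star_0$ case---on \cite[Theorem 2.12]{Guin2} applied to the pre-Lie algebra $(\mathcal{T}_E^V,\wh\rhd)$, whose pre-Lie property was established in the preceding proposition via the Chu--Vandermonde identity. Your supplementary checks (that $\bullet_2$ and $\star_2$ are literally the Guin--Oudom recipe with finite sums, that generators are primitive for $\Delta$, and that the isomorphism with $\CU(\mathfrak{g}_2)$ is the standard PBW-type one, most canonically the unique algebra morphism $\CU(\mathfrak{g}_2)\to(S(\mathcal{T}_E^V),\star_2)$ extending the identity on $\mathcal{T}_E^V$) are consistent with that framework and merely make explicit what the paper leaves to the citation.
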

Let us recall from \cite[Sec. 4.2]{BR18} a symbolic notation for decorated forests with one distinguished tree. This notation was crucial in that paper for deriving a recursive formula for an extraction-contraction coproduct. Here it plays another important role in identifying the dual of the product $ \star_2$. The linear span $ \mathcal{T}_E^V\otimes S(\mathcal{T}_E^V ) $ of forests with one distinguished tree is denoted by \label{forest_f_d} $ \hat S(\mathcal{T}_E^V ) $. The map
$$\tau\otimes\tau_1\cdots\tau_n\longmapsto \tau\tau_1\cdots\tau_n$$
from $ \hat S(\mathcal{T}_E^V ) $ into $S(\mathcal{T}_E^V ) $ is denoted by \label{cal_c} $\mathcal C$. Adopting the notation $\tau\mathcal C(\tau_1)\cdots\mathcal C(\tau_n)$ for the left-hand side, the map $ \mathcal{C}  $ satisfies 
\begin{equs}
\mathcal{C}\big(\tau\mathcal C(\tau_1)\cdots\mathcal C(\tau_n)\big) = \mathcal{C}(\tau)\mathcal C(\tau_1)\cdots\mathcal C(\tau_n).
\end{equs}
 We identify $ S(\mathcal{T}_E^V ) $ as a subspace of $ \hat S(\mathcal{T}_E^V ) $ by adding a distinguished single node tree without decoration to each element of $ S(\mathcal{T}_E^V ) $. Then, $ \mathcal{C} $ is an operator from $ \hat S(\mathcal{T}_E^V ) $ into itself which is in agreement with the first use of this symbol in \cite[Sec. 4.2]{BR18}. 
We endow $ \hat S(\mathcal{T}_E^V ) $ with the following product:
\begin{equs}
 \left(  \sigma \widetilde{\prod}_i \mathcal{C}(\sigma_i) \right) \left(  \tau \widetilde{\prod}_j \mathcal{C}( \tau_j) \right) = \sigma  \tau \widetilde{\prod}_i \mathcal{C}(\sigma_i)  \widetilde{\prod}_j \mathcal{C}(\tau_j) 
\end{equs}
where $ \sigma  \tau  $ is now the tree product between $ \sigma  $ and $ \tau $.
We extend $ \Delta  $ on $ \hat S(\mathcal{T}_E^V )  $ by defining it on the distinguished tree as multiplicative for the tree product. Now for monomials (identified with one-vertex trees), one has:
\begin{equs} \label{ploynomial_splitting}
\Delta X = X \otimes \one +  \one \otimes X
\end{equs}
which gives rise to the following identity
\begin{equs}
\Delta X^k=\sum_{\ell} {k\choose \ell}X^{\ell}\otimes X^{k-\ell}.
\end{equs}

\label{delta_dp} \label{delta_dp_bar}
We consider the following maps $ \Delta_{\text{\tiny{DP}}} :\mathcal{T}_E^V  \rightarrow \hat S(\mathcal{T}_E^V ) \hattimes \mathcal{T}_E^V $ and $ \bar{\Delta}_{\text{\tiny{DP}}} :\mathcal{T}_E^V  \rightarrow  S(\mathcal{T}_E^V ) \hattimes \mathcal{T}_E^V $  defined recursively by:
\begin{equs}
\Delta_{\text{\tiny{DP}}} X & = X \hattimes \one + \one \hattimes X, \quad \bar{\Delta}_{\text{\tiny{DP}}}X= X \hattimes \one , \quad\bar{\Delta}_{\text{\tiny{DP}}} \CI_a(\tau)  = \left( \mathcal{C} \hattimes \CI_a \right)  \Delta_{\text{\tiny{DP}}} \tau   
\\ \Delta_{\text{\tiny{DP}}} \CI_a(\tau) & =  \bar{\Delta}_{\text{\tiny{DP}}} \CI_a(\tau) + \sum_{\ell \in \N^{d+1}} \frac{1}{\ell !} \CI_{a + \ell}(\tau) \hattimes X^{\ell}.
\end{equs}
Then, they are extended multiplicatively.
\begin{theorem} \label{DP_deformation}
The product $ \star_2 $ is the dual of the deformed coproduct $ \Delta_2 = \left( \mathcal{C} \otimes \id \right) \Delta_{\text{\tiny{DP}}} $. \label{delta_2_2} One has
\begin{equs}
\langle \hat \tau_1 \hattimes \tau_2, \Delta_2 \tau_3 \rangle   =  \langle \hat \tau_1 \,  \wh{\rhd} \,  \tau_2,  \tau_3 \rangle, 
\quad \langle \hat \tau_1 \hattimes  \tau_2,  \bar{\Delta}_{\text{\tiny{DP}}} \tau_3 \rangle  =  \langle \hat \tau_1 \wh{\rhd}^{\nonroot}   \tau_2,  \tau_3 \rangle
\end{equs}
where $  \hat \tau_1 \in  S(\mathcal{T}_E^V )$ and $  \tau_2,  \tau_3 \in \mathcal{T}_E^V $.
\end{theorem}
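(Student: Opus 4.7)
The plan is to mirror the proof of Theorem~\ref{DCK_def}, establishing both identities simultaneously by induction on the number of edges of $\tau_3$. First, I would establish the identity for $\bar{\Delta}_{\text{\tiny{DP}}}$, and then deduce the identity for $\Delta_2$ by adding the contributions of the two summands of the decomposition $\wh{\rhd} = \wh{\rhd}^{\root} + \wh{\rhd}^{\nonroot}$ given in \eqref{root_plugging}. The multiplicative extension identity \eqref{product_delta} (whose proof only uses the explicit form of the symmetry factor $S$) will handle forest inputs $\hat\tau_1$ that are not single trees, so throughout it suffices to reason with $\hat\tau_1 = \widetilde{\prod}_{i\in I}\mathcal{I}_{a_i}(\sigma_i)\cdot X^k$ and to follow exactly the bookkeeping of Theorem~\ref{DCK_def}.

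For the inductive step in the non-root identity, when $\tau_3=\mathcal{I}_b(\tau_3')$ no element of the forest can be plugged at the (undecorated) root of $\mathcal{I}_b(\tau_3')$, so
\[
\hat\tau_1 \,\wh{\rhd}^{\nonroot}\, \mathcal{I}_b(\tau_3') \;=\; \mathcal{I}_b\bigl(\hat\tau_1 \,\wh{\rhd}\, \tau_3'\bigr).
\]
Pairing this against $\mathcal{I}_b(\sigma)$ for any $\sigma$ reduces to the pairing with $\wh{\rhd}$ on $\tau_3'$, which by induction equals a pairing against $\Delta_2\tau_3'$. Wrapping by $\mathcal{C}\hattimes\mathcal{I}_b$ then matches the recursion
\[
\bar{\Delta}_{\text{\tiny{DP}}}\mathcal{I}_a(\tau) \;=\; (\mathcal{C}\hattimes \mathcal{I}_a)\Delta_{\text{\tiny{DP}}}\tau,
\]
which is precisely the role of $\mathcal{C}$: it reabsorbs the distinguished tree of the recursion into the ambient symmetric algebra, and this reabsorption is compatible with the dual pairing thanks to \eqref{product_delta} applied to the tree that gets plugged at the root of $\tau_3'$.

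For the root-plugging contribution $\wh{\rhd}^{\root}$, the computation is parallel to that of the root-decorated piece in the proof of Theorem~\ref{DCK_def}: a subset $J\subseteq I$ of the forest, together with the polynomial factor $X^k$, merges into the root of $\tau_3$, producing a sum $\sum_{\ell_i}\binom{n_{v}}{(\ell_i)_{i\in J}}$-weighted family of edge-shifted trees, while the remaining trees in $I\setminus J$ are plugged at non-root nodes. Dually, this sum must be matched against $\sum_{\ell}\frac{1}{\ell!}\mathcal{I}_{a+\ell}(\tau)\hattimes X^{\ell}$ from $\Delta_{\text{\tiny{DP}}}$. The identity between coefficients boils down to
\[
\binom{n_v}{(\ell_i)_{i\in J}}\,(n_v-|\ell|)! \;=\; \Bigl(\prod_{i\in J}\tfrac{1}{\ell_i!}\Bigr) n_v!,
\]
exactly as in \eqref{root_ident}, and the sum over all partitions $I = J \sqcup (I\setminus J)$ is implemented, via the coproduct $\Delta$ on $S(\mathcal{T}_E^V)$, by decomposing $\Delta_2 = (\mathcal{C}\otimes\id)\Delta_{\text{\tiny{DP}}}$ through the analogue of the identity $\mathcal{M}^{(13)(24)}(\bar{\Delta}_{\text{\tiny{DCK}}}^{\root}\hattimes\Delta_{\text{\tiny{DCK}}})\Delta = \bar{\Delta}_{\text{\tiny{DCK}}}$ used at the end of the proof of Theorem~\ref{DCK_def}. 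When several trees from $J$ are plugged simultaneously and their edge decorations are shifted by different $\ell_i$, the Chu-Vandermonde identity recalled in Paragraph~\ref{chuvdm} is what consolidates the product of binomials into a single multinomial coefficient, exactly as was used to prove that $\wh{\rhd}$ is pre-Lie.

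The main obstacle I anticipate is the careful tracking of the distinguished tree encoded by $\mathcal{C}$. In contrast with Theorem~\ref{DCK_def}, where all trees of the forest play symmetric roles (they are all grafted on $\tau_3$ by new edges), here the forest contains at most one tree that merges \emph{at the root} of the receiving tree via $\wh{\rhd}^{\root}$, while the others merge at internal nodes. This is reflected in the fact that $\Delta_{\text{\tiny{DP}}}$ lives in $\hat S(\mathcal{T}_E^V)\hattimes\mathcal{T}_E^V$, not in $S(\mathcal{T}_E^V)\hattimes\mathcal{T}_E^V$. The delicate point is to verify that, under the pairing, the image of $\mathcal{C}$ does not overcount contributions when several trees in the forest happen to coincide with the one merged at the root; this is handled by a careful analysis of the symmetry factors $S$, in the same spirit as \eqref{product_delta}, and is why the final coproduct one dualises is $\Delta_2 = (\mathcal{C}\otimes\id)\Delta_{\text{\tiny{DP}}}$ rather than $\Delta_{\text{\tiny{DP}}}$ itself.
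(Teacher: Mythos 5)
Your plan coincides with the paper's proof in all structural respects: induction mirroring Theorem~\ref{DCK_def}, the splitting \eqref{root_plugging} of $\wh\rhd$ dual to the two summands of $\Delta_{\text{\tiny{DP}}}\CI_a(\tau)$, the forest and polynomial bookkeeping through the analogue of \eqref{product_delta}, and the reduction of everything to a single coefficient check at the root. Two local points, however, are stated incorrectly and would need repair before the argument closes.

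First, your description of the root term --- ``a subset $J\subseteq I$ of the forest, together with the polynomial factor $X^k$, merges into the root'' --- is the picture of Theorem~\ref{DCK_def}, not of plugging: the brace extension of $\wh\rhd$ merges the roots of the forest components with \emph{distinct} vertices of $\tau_2$, so at most one component of $\hat\tau_1$ can be plugged at the root. Your own final paragraph states this correctly (and identifies it as the reason for the distinguished tree in $\hat S(\CT_E^V)$), contradicting the earlier sentence. The multinomial ${\Labn_v\choose\ell}$ of \eqref{formula_rhd_bis} then runs over the branches of that single component, and no Chu--Vandermonde consolidation occurs anywhere in the duality proof --- in the paper that identity is used only to prove that $\wh\rhd$ is pre-Lie. (Likewise, in your non-root step the identity $\hat\tau_1\,\wh\rhd^{\nonroot}\,\CI_b(\tau_3')=\CI_b(\hat\tau_1\,\wh\rhd\,\tau_3')$ holds simply by the definition of $\wh\rhd^{\nonroot}$, not because the root is undecorated: roots in $\CT_E^V$ carry decorations and root plugging is perfectly possible; it is just accounted for separately.) Second, and more substantively, your root coefficient identity is literally \eqref{root_ident}, but the plugging case differs from the grafting case exactly there: the root monomial of the plugged component adds to that of $\tau_2$, and $\Delta$ splits the resulting monomial with a binomial by \eqref{ploynomial_splitting}. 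The verification the paper actually performs is
\[
{n+\sum_{i\in J}\ell_i \choose (\ell_i)_{i\in J}}\,(n+k)! \;=\; \frac{1}{\prod_{i\in J}\ell_i!}\,{n+k\choose k}\,k!\,\Big(n+\sum_{i\in J}\ell_i\Big)!\,,
\]
with $k$ and $n+\sum_{i\in J}\ell_i$ the root exponents on the two tensor factors; the factor ${n+k\choose k}$ coming from the splitting of the merged root polynomial is absent from your identity. Since the paper itself singles this computation out as ``the only property missing'' relative to Theorem~\ref{DCK_def}, it is precisely the one new verification your proposal does not carry out; once it is corrected, the rest of your plan goes through exactly as in the paper.
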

\begin{proof} 
The proof follows essentialy the same steps as in Theorem~\ref{DCK_def}.
First, one can easily check that for $ \tau_1, \tau_2 \in \CT_E^V $, $  \tau_3 \in \CT_E^V $
\begin{equs}
\langle X^k \tau_1 \tau_2,  \tau_3 \rangle = \langle X^n \tau_1 \hattimes X^{n-k}\tau_2, \Delta  \tau_3 \rangle
\end{equs}
where here we view an element of $ \CT_E^V $ as an element of $ \hat S(\CT_E^V) $ having one distinguished tree and an empty forest.
For $  \tau_2 = X^k \tau_0 = X^{k} \prod_{j =1}^n \CI_{a_j}(\sigma_j) \in  \mathcal{T}_E^V  $ and $ \hat \tau_1  = \widetilde\prod_{i \in I} \mathcal{C} \left( X^{k_i} \prod_{j \in J_i} \CI_{a_{ij}}(\tau_{ij}) \right) \in  S(\mathcal{T}_E^V ) $, one has
\begin{equs}
  \hat \tau_1 & \, \wh{\rhd} \, \tau_2  = \sum_{i \in I}  \sum_{\ell_j \in\N^{d+1}}{k\choose (\ell_j)_{j \in J_i}}
X^{k-\sum_{j \in J_i} \ell_j + k_i}  \left(\prod_{j \in J_i} \mathcal{I}_{a_{ij}-\ell_j}(\tau_{ij}) \right) 
\\ &  \widetilde\prod_{r \in I \setminus \lbrace i \rbrace} \mathcal{C} \left( X^{k_r} \prod_{j \in J_r} \CI_{a_{rj}}(\tau_{rj}) \right)  \wh{\rhd}^{\nonroot} \prod_{j =1}^n \CI_{a_j}(\sigma_{j}) + X^k \left(\hat \tau_1 \wh \rhd^{\nonroot } \tau_0 \right).
\end{equs}
Then by using the recursive definition of the coproduct $ \Delta_{\text{\tiny{DP}}} $, one has
\begin{equs}
& \Delta_{\text{\tiny{DP}}}   \left( X^m \prod_{j \in J} \CI_{a_j}(\tau_{j})  \right)  = \Delta_{\text{\tiny{DP}}} X^m \prod_{j \in J} \left(\bar{\Delta}_{\text{\tiny{DP}}}  \CI_{a_j}(\tau_{j}) \right. \\ 
& \hskip 5mm +   \sum_{\ell_j \in \N^{d+1}} \frac{1}{\ell_j !} \CI_{a_j + \ell_j}(\tau_j) \hattimes X^{\ell_j} 
 ) \\ 
 &  = \sum_{I \subset J} \sum_{k, \ell_i \in \N^{d+1}} {m \choose k}  \left( X^{m-k}\prod_{i \in I}  \frac{\CI_{a_i + \ell_i}(\tau_i)}{\ell_i !} \hattimes X^{k + \sum_{i \in J} \ell_i}  \right) \left(\prod_{i \in J \setminus I} \bar{\Delta}_{\text{\tiny{DP}}}  \CI_{a_i}(\tau_{i}) \right).
\end{equs}
Then the proof follows the same lines as in Theorem~\ref{DCK_def}, where now one uses $ \Delta $ defined on $ \CT_E^V $ seperating the polynomial decorations, see \eqref{ploynomial_splitting}.
The only property missing is to match the combinatorial coefficient in the dual when we plug at the root similarly as \eqref{root_ident}. Indeed, one has
\begin{equs}
{n + \sum_{i \in J} \ell_i \choose (\ell_i)_{i \in J}} \langle 
X^{n+k}  , X^{n+k}  \rangle&  = {n + \sum_{i \in J} \ell_i \choose (\ell_i)_{i \in J}} (n+k)!
\end{equs}
On the other hand, one gets:
\begin{equs}
 \frac{1}{  \prod_{i \in J} \ell_i !}{n + k \choose k}  & \langle 
X^{k}  \otimes  X^{n + \sum_{i \in J} \ell_i } , X^{k}  \otimes  X^{n + \sum_{i \in J} \ell_i }  \rangle 
\\ &  = \frac{k! (n+\sum_{i \in J} \ell_i)!}{  \prod_{i \in J} \ell_i !}{n + k \choose k}.
\end{equs}
Then the coefficients coincide, which allows us to conclude.
\end{proof}

We consider the unital algebra morphism $ \CK : S(\CT_E^V) \rightarrow \CT_E^V $ given by $ \CK(\one) = \bullet_0 $ and
\begin{equs}\label{ident_i}
 \CK\left( \widetilde\prod_i \CC(  \tau_i )\right) = \prod_i  \tau_i
\end{equs}
where $ \tau_i \in  \CT_E^V$ and the product $ \prod_i \tau_i$ is the tree product.
The map $ \CK $ merges all the trees into one by identifying the roots. The decorations at the roots are added to the new root. One easily checks that
\begin{equs}
\Delta_2 \CK = \left( \CK \hattimes \CK \right) \Delta_2.
\end{equs}
Then the coproduct $ \Delta_2 : S(\CT_E^V)  \rightarrow S(\CT_E^V) \hattimes S(\CT_E^V) $ can be viewed as a coproduct from $ \CT_E^V  $ into $ \CT_E^V \hattimes \CT_E^V  $ and it will be recursively defined by:
\begin{equs}
\Delta_{2} X & = X \hattimes \one + \one \hattimes X,  
\\ \Delta_{2} \CI_a(\tau) & =  \left( \id \hattimes \CI_a \right)\Delta_{2} \tau + \sum_{\ell \in \N^{d+1}} \frac{1}{\ell !} \CI_{a + \ell}(\tau) \hattimes X^{\ell}.
\end{equs} 
We still denote the product associated to $ \Delta_2 $ by $ \star_2 $.
 The dual map $ \CK^{*} $ of $ \CK $ corresponds to  identifying all the ways to split a tree into blocks:
\begin{equs} \label{block_decomposition}
\CK^*\left( X^k \prod_{i \in I} \CI_{a_i}(\tau_i)\right) & = \sum_{k_1 + \cdots + k_n = k} \sum_{J_1 \sqcup \cdots \sqcup J_{n-m}= I} \widetilde\prod_{j=1}^m \mathcal{C}(X^{k_j}) \\ &  \widetilde\prod_{ \ell = m+1}^n \CC\left(  X^{k_{\ell}} \prod_{j \in J_{\ell-m}} \CI_{a_j}(\tau_j) \right).
\end{equs}

In the next Proposition, we present a connection between the two products $ \wh \curvearrowright $ and $ \star_2 $.

\begin{proposition} \label{plugging_grafting}
One has for $ \sigma = X^k \prod_{i\in I} \CI_{a_i}( \sigma_i),  \tau \in \CT_E^V $
\begin{equs} \label{link_plugging_grafting}
\CI_b \left( \sigma \star_2  \tau \right) = \tilde{\uparrow}^{k}_{N_{ \tau}} \left( \prod_{i\in I} \CI_{a_i}( \sigma_i) \,  \wh \curvearrowright \, \CI_b ( \tau) \right)
\end{equs}
where $ \tilde{\uparrow}^{k}_{N_{ \tau}} $ is defined by
\begin{equs} \label{splitting_polynomials}
\tilde{\uparrow}^{k}_{N_{\tau}} =
\sum_{k = \sum_{v \in N_{\tau}} k_v} \uparrow^{k_v}_{v}.
\end{equs}
\end{proposition}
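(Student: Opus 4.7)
The plan is to first strip the planting $\mathcal{I}_b$ from both sides, reducing to an identity on $\CT_E^V$, and then match coefficients via an iterated application of the Chu-Vandermonde identity of Paragraph~\ref{chuvdm}. By the definition~\eqref{def_pre_a}, one has $\mathcal{I}_a(\sigma)\,\wh\curvearrowright\,\mathcal{I}_b(\tau) = \mathcal{I}_b(\sigma\,\wh\curvearrowright^a\,\tau)$, so the $\wh\curvearrowright$-grafting never accesses the root of $\mathcal{I}_b(\tau)$. Iterating this through the Guin-Oudom brace extension of $\wh\curvearrowright$ to $S(\CP_E^V)\otimes\CP_E^V$, I obtain
$$\prod_{i\in I}\mathcal{I}_{a_i}(\sigma_i)\,\wh\curvearrowright\,\mathcal{I}_b(\tau) \;=\; \mathcal{I}_b(M),$$
where $M\in\CT_E^V$ is the symmetric multi-grafting of the $\sigma_i$'s (each via a new edge $a_i$, with the Guin-Oudom deformation) onto the nodes of $\tau$. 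Since $\tilde{\uparrow}^{k}_{N_{\tau}}$ acts only on $N_\tau$, it commutes with $\mathcal{I}_b$, and the claim reduces to $\sigma\,\star_2\,\tau = \tilde{\uparrow}^{k}_{N_{\tau}}(M)$.

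Next, I would expand both sides as sums indexed by the same combinatorial data: an assignment $\phi:I\to N_\tau$, deformation shifts $(\ell_i)_{i\in I}\in(\N^{d+1})^{I}$, and a split $(k_v)_{v\in N_\tau}$ with $\sum_v k_v = k$. For each choice, both sides produce the same decorated tree, in which node $v$ carries decoration $\Labn_v + k_v - \sum_{i\in\phi^{-1}(v)}\ell_i$ and new branches $\mathcal{I}_{a_i-\ell_i}(\sigma_i)$ attached at $\phi(i)$; it therefore only remains to check that the combinatorial coefficients coincide. On the right-hand side, iterating the Guin-Oudom brace at a common vertex $v$ produces the multinomial $\binom{\Labn_v}{(\ell_i)_{i\in\phi^{-1}(v)}}$ via successive use of the identity $\binom{\Labn_v}{\ell'}\binom{\Labn_v-|\ell'|}{\ell}=\binom{\Labn_v}{\ell,\ell'}$, together with the summation coming from $\tilde{\uparrow}^{k}_{N_{\tau}}$. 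On the left, Theorem~\ref{DP_deformation} gives $\langle\sigma\,\star_2\,\tau,w\rangle = \langle\sigma\hattimes\tau,\bar{\Delta}_{\text{\tiny{DP}}}w\rangle$; unwinding the recursive definition of $\bar{\Delta}_{\text{\tiny{DP}}}$ and carefully tracking the factors $\tfrac{1}{\ell_i!}$ from its extraction terms, together with the polynomial splits $\binom{\Labn_v^w}{\cdot}$ and the symmetry factors $S(\sigma)$, $S(\tau)$, $S(w)$, produces the same multinomial coefficient at each node after dividing by $S(w)$.

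The main obstacle will be the precise bookkeeping of these coefficients, particularly when several of the $\sigma_i$'s are grafted at the same node of $\tau$, where the matching is exactly an instance of the Chu-Vandermonde identity. One must simultaneously keep track of the polynomial splits which contribute the $X^k$-factor on the extraction side of $\bar{\Delta}_{\text{\tiny{DP}}}$ versus the $\tilde{\uparrow}^{k}_{N_{\tau}}$-splits on the grafting side, the shifts $\ell_i$ absorbed into edge decorations with their $\tfrac{1}{\ell_i!}$ normalisations, and the dual-basis factors $S(w)$ arising from the multinomial coefficients. Once the node-by-node match is established via Chu-Vandermonde, summing over all combinatorial data $(\phi, (\ell_i), (k_v))$ closes the proof.
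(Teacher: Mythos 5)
Your proposal is correct in substance, but it takes a genuinely more computational route than the paper, which disposes of the proposition in three structural steps: by the very construction of $\star_2$ via Guin--Oudom together with the identification $\CK$, one has $\sigma \star_2 \tau = (\CK^{*}\sigma)\,\wh{\rhd}\,\tau$; since the brace extension of $\wh{\rhd}$ plugs distinct blocks of $\CK^{*}\sigma$ at \emph{distinct} vertices (so at most $|N_\tau|$ blocks survive in \eqref{block_decomposition}), the purely polynomial content of the blocks factors out as $\tilde{\uparrow}^{k}_{N_{\tau}}$; and finally $\CI_b\big((\CK^{*}\prod_{i\in I}\CI_{a_i}(\sigma_i))\,\wh{\rhd}\,\tau\big) = \prod_{i\in I}\CI_{a_i}(\sigma_i)\,\wh{\curvearrowright}\,\CI_b(\tau)$, because $\CK^{*}$ groups exactly those branches that are grafted onto a common node of $\tau$. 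Your expansion over the data $(\phi,(\ell_i),(k_v))$ is precisely an unpacked version of this last identity: the fibers $\phi^{-1}(v)$ are the blocks of $\CK^{*}$, and your node-by-node match of the multinomial ${\Labn_v \choose (\ell_i)_{i\in\phi^{-1}(v)}}$ from \eqref{formula_rhd_bis} against the product of successive binomials produced by the $\wh{\curvearrowright}$-brace is the correct verification. What each approach buys: the paper's argument requires no coefficient bookkeeping at all (the multinomial match is absorbed into the definition of $\star_2$), while yours checks the identity from first principles and makes explicit the step the paper leaves implicit.

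Two corrections to your sketch. First, the identity you invoke at a common vertex, ${\Labn_v\choose \ell'}{\Labn_v-|\ell'|\choose \ell} = {\Labn_v\choose \ell,\ell'}$, is just the factorization of the multinomial coefficient, not the Chu--Vandermonde identity \eqref{chu-vdm}; Chu--Vandermonde is what is needed for the pre-Lie property of $\wh{\rhd}$, but it plays no role here. Second, and more importantly, if you route the left-hand side through duality you must pair against $\Delta_2 = \left(\mathcal{C}\otimes\id\right)\Delta_{\text{\tiny{DP}}}$, not against $\bar{\Delta}_{\text{\tiny{DP}}}$: by Theorem~\ref{DP_deformation}, $\bar{\Delta}_{\text{\tiny{DP}}}$ is dual to $\wh{\rhd}^{\nonroot}$, so pairing with it silently discards all root-plugging contributions to $\sigma\star_2\tau$. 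Since the full expansion of $(\CK^{*}\sigma)\,\wh{\rhd}\,\tau$ is available directly from \eqref{block_decomposition} and \eqref{formula_rhd_bis}, bypassing duality (and the attendant $S(w)$ bookkeeping) is both safer and shorter.
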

\begin{proof}  From the definition of $ \star_2 $. one has:
\begin{equs}
 \sigma \star_2  \tau = (\CK^{*}  \sigma) \, \wh{\rhd} \, \tau.
\end{equs}
Due to the property of the product $ \wh{\rhd} $, one can have only $ |N_\tau| $ blocks containing  polynomials decorations at the root in \eqref{block_decomposition}. Therefore, one gets
\begin{equs}
 \sigma \star_2  \tau = \tilde{\uparrow}^{k}_{N_{\tau}} \left( \CK^{*}  \prod_{i\in I} \CI_{a_i}( \sigma_i) \right) \, \wh{\rhd} \, \tau.
\end{equs}
Then, we can conclude with the following identity
\begin{equs}
\CI_b \left( \Big(\CK^{*}  \prod_{i\in I} \CI_{a_i}( \sigma_i) \Big) \, \wh{\rhd} \, \tau \right) =  \prod_{i\in I} \CI_{a_i}( \sigma_i) \,  \wh \curvearrowright \, \CI_b ( \tau).
\end{equs}
Indeed, $ \CK^*$ groups branches that will be grafted onto the same node in $ \tau $.
\end{proof}
 \begin{remark} We could have used the right hand side of \eqref{link_plugging_grafting} for defining the product $ \star_2 $. Then, it is not trivial to see that the product $ \star_2 $ is associative. Moreover, the identity \eqref{splitting_polynomials} corresponds to the identification $ \CK $ on the polynomials.  We find natural to apply this splitting to trees.
 \end{remark}
 
 \begin{remark} When $ k= 0  $, one recovers the product $ \star_0 $ given in \eqref{def_*}.
 \end{remark}
 
 We finish this subsection by presenting an example of computation for $ \star_2 $. We consider the following trees:
 \begin{equs} 
\sigma = \begin{tikzpicture}[scale=0.2,baseline=0.1cm]
        \node at (0,0)  [dot,label= {[label distance=-0.2em]below: \scriptsize  $ \delta  $} ] (root) {};
         \node at (2,4)  [dot,label={[label distance=-0.2em]above: \scriptsize  $ \gamma $}] (right) {};
         \node at (-2,4)  [dot,label={[label distance=-0.2em]above: \scriptsize  $ \beta $} ] (left) {};
            \draw[kernel1] (right) to
     node [sloped,below] {\small }     (root); \draw[kernel1] (left) to
     node [sloped,below] {\small }     (root);
     \node at (-1,2) [fill=white,label={[label distance=0em]center: \scriptsize  $ b $} ] () {};
    \node at (1,2) [fill=white,label={[label distance=0em]center: \scriptsize  $ c $} ] () {};
     \end{tikzpicture}, \qquad 
     \tau =  \begin{tikzpicture}[scale=0.2,baseline=0.1cm]
        \node at (0,0)  [dot,label= {[label distance=-0.2em]below: \scriptsize  $ \omega $} ] (root) {};
         \node at (0,4)  [dot,label={[label distance=-0.2em]above: \scriptsize  $ \alpha $}] (right) {};
            \draw[kernel1] (right) to
     node [sloped,below] {\small }     (root); 
     \node at (0,2) [fill=white,label={[label distance=0em]center: \scriptsize  $ a $} ] () {};
     \end{tikzpicture}, \quad
 \CK^{*} \sigma  = \sum_{\delta=\delta_1 + \delta_2} ( \bullet_{\delta_1}  \begin{tikzpicture}[scale=0.2,baseline=0.1cm]
        \node at (0,0)  [dot,label= {[label distance=-0.2em]below: \scriptsize  $ \delta_2  $} ] (root) {};
         \node at (2,4)  [dot,label={[label distance=-0.2em]above: \scriptsize  $ \gamma $}] (right) {};
         \node at (-2,4)  [dot,label={[label distance=-0.2em]above: \scriptsize  $ \beta $} ] (left) {};
            \draw[kernel1] (right) to
     node [sloped,below] {\small }     (root); \draw[kernel1] (left) to
     node [sloped,below] {\small }     (root);
     \node at (-1,2) [fill=white,label={[label distance=0em]center: \scriptsize  $ b $} ] () {};
    \node at (1,2) [fill=white,label={[label distance=0em]center: \scriptsize  $ c $} ] () {};
     \end{tikzpicture} +  \begin{tikzpicture}[scale=0.2,baseline=0.1cm]
        \node at (0,0)  [dot,label= {[label distance=-0.2em]below: \scriptsize  $ \delta_1 $} ] (root) {};
         \node at (0,4)  [dot,label={[label distance=-0.2em]above: \scriptsize  $ \beta $}] (right) {};
            \draw[kernel1] (right) to
     node [sloped,below] {\small }     (root); 
     \node at (0,2) [fill=white,label={[label distance=0em]center: \scriptsize  $ b $} ] () {};
     \end{tikzpicture} \, \begin{tikzpicture}[scale=0.2,baseline=0.1cm]
        \node at (0,0)  [dot,label= {[label distance=-0.2em]below: \scriptsize  $ \delta_2 $} ] (root) {};
         \node at (0,4)  [dot,label={[label distance=-0.2em]above: \scriptsize  $ \gamma $}] (right) {};
            \draw[kernel1] (right) to
     node [sloped,below] {\small }     (root); 
     \node at (0,2) [fill=white,label={[label distance=0em]center: \scriptsize  $ c $} ] () {};
     \end{tikzpicture} + \cdots ).
 \end{equs}
Therefore, $ \sigma \star_2 \tau$ is given by
\begin{equs} \label{explicit_star_2}
 \sigma  \star_2 \tau & = \sum_{\delta=\delta_1 + \delta_2} ( \bullet_{\delta_1}  \begin{tikzpicture}[scale=0.2,baseline=0.1cm]
        \node at (0,0)  [dot,label= {[label distance=-0.2em]below: \scriptsize  $ \delta_2  $} ] (root) {};
         \node at (2,4)  [dot,label={[label distance=-0.2em]above: \scriptsize  $ \gamma $}] (right) {};
         \node at (-2,4)  [dot,label={[label distance=-0.2em]above: \scriptsize  $ \beta $} ] (left) {};
            \draw[kernel1] (right) to
     node [sloped,below] {\small }     (root); \draw[kernel1] (left) to
     node [sloped,below] {\small }     (root);
     \node at (-1,2) [fill=white,label={[label distance=0em]center: \scriptsize  $ b $} ] () {};
    \node at (1,2) [fill=white,label={[label distance=0em]center: \scriptsize  $ c $} ] () {};
     \end{tikzpicture} +  \begin{tikzpicture}[scale=0.2,baseline=0.1cm]
        \node at (0,0)  [dot,label= {[label distance=-0.2em]below: \scriptsize  $ \delta_1 $} ] (root) {};
         \node at (0,4)  [dot,label={[label distance=-0.2em]above: \scriptsize  $ \beta $}] (right) {};
            \draw[kernel1] (right) to
     node [sloped,below] {\small }     (root); 
     \node at (0,2) [fill=white,label={[label distance=0em]center: \scriptsize  $ b $} ] () {};
     \end{tikzpicture} \, \begin{tikzpicture}[scale=0.2,baseline=0.1cm]
        \node at (0,0)  [dot,label= {[label distance=-0.2em]below: \scriptsize  $ \delta_2 $} ] (root) {};
         \node at (0,4)  [dot,label={[label distance=-0.2em]above: \scriptsize  $ \gamma $}] (right) {};
            \draw[kernel1] (right) to
     node [sloped,below] {\small }     (root); 
     \node at (0,2) [fill=white,label={[label distance=0em]center: \scriptsize  $ c $} ] () {};
     \end{tikzpicture} ) \, \wh{\rhd} \, 
     \begin{tikzpicture}[scale=0.2,baseline=0.1cm]
        \node at (0,0)  [dot,label= {[label distance=-0.2em]below: \scriptsize  $ \omega $} ] (root) {};
         \node at (0,4)  [dot,label={[label distance=-0.2em]above: \scriptsize  $ \alpha $}] (right) {};
            \draw[kernel1] (right) to
     node [sloped,below] {\small }     (root); 
     \node at (0,2) [fill=white,label={[label distance=0em]center: \scriptsize  $ a $} ] () {};
     \end{tikzpicture}
\\ 
 & = \sum_{\delta = \delta_1 + \delta_2} \Big( \sum_{\ell} {\omega \choose \ell}  \begin{tikzpicture}[scale=0.2,baseline=0.1cm]
        \node at (0,0)  [dot,label= {[label distance=-0.2em]below: \scriptsize  $ \omega + \delta_2 - |\ell| $} ] (root) {};
         \node at (4,4)  [dot,label={[label distance=-0.2em]above: \scriptsize  $ \gamma $}] (right) {};
         \node at (0,6)  [dot,label={[label distance=-0.2em]above: \scriptsize  $ \beta $}] (center) {};
         \node at (-4,4)  [dot,label={[label distance=-0.2em]above: \scriptsize  $ \alpha + \delta_1 $} ] (left) {};
            \draw[kernel1] (right) to
     node [sloped,below] {\small }     (root); \draw[kernel1] (left) to
     node [sloped,below] {\small }     (root);
     \draw[kernel1] (center) to
     node [sloped,below] {\small }     (root);
      \node at (2,2) [fill=white,label={[label distance=-1em]right: \scriptsize  $ c -\ell_2$} ] () {};
     \node at (-2,2) [fill=white,label={[label distance=0em]center: \scriptsize  $ a $} ] () {};
     \node at (0,4) [fill=white,label={[label distance=0em]center: \scriptsize  $ b - \ell_1  $} ] () {};
     \end{tikzpicture}
     + \sum_{\ell} {\alpha \choose \ell} \begin{tikzpicture}[scale=0.2,baseline=0.1cm]
        \node at (0,0)  [dot,label= {[label distance=-0.2em]below: \scriptsize  $ \omega + \delta_1 $} ] (root) {};
         \node at (-3,3)  [dot,label={[label distance=-0.2em]left: \scriptsize  $ \alpha + \delta_2 -|\ell| $} ] (left) {};
          \node at (0,7)  [dot,label={[label distance=-0.2em]right: \scriptsize  $ \gamma $} ] (center) {};
          \node at (-6,7)  [dot,label={[label distance=-0.2em]right: \scriptsize  $ \beta $} ] (centerr) {};
          \draw[kernel1] (left) to
     node [sloped,below] {\small }     (root);
      \draw[kernel1] (center) to
     node [sloped,below] {\small }     (left);
     \draw[kernel1] (centerr) to
     node [sloped,below] {\small }     (left);
     \node at (-1.5,1.5) [fill=white,label={[label distance=0em]center: \scriptsize  $ a $} ] () {};
      \node at (-1.5,5) [fill=white,label={[label distance=0em]center: \scriptsize  $ c - \ell_2 $} ] () {};
       \node at (-4.5,5) [fill=white,label={[label distance=-1em]left: \scriptsize  $ b - \ell_1 $} ] () {};
     \end{tikzpicture} \\ & +
     \sum_{\ell} {\omega \choose \ell_1} {\alpha \choose \ell_2}  \begin{tikzpicture}[scale=0.2,baseline=0.1cm]
        \node at (0,0)  [dot,label= {[label distance=-0.2em]below: \scriptsize  $ \omega + \delta_1 -\ell_1 $} ] (root) {};
         \node at (-3,3)  [dot,label={[label distance=-0.2em]left: \scriptsize  $ \alpha + \delta_2 - \ell_2 $} ] (left) {};
          \node at (0,6)  [dot,label={[label distance=-0.2em]right: \scriptsize  $ \gamma $} ] (center) {};
          \node at (3,3)  [dot,label={[label distance=-0.2em]right: \scriptsize  $ \beta $} ] (centerr) {};
          \draw[kernel1] (left) to
     node [sloped,below] {\small }     (root);
      \draw[kernel1] (center) to
     node [sloped,below] {\small }     (left);
     \draw[kernel1] (centerr) to
     node [sloped,below] {\small }     (root);
     \node at (-1.5,1.5) [fill=white,label={[label distance=0em]center: \scriptsize  $ a $} ] () {};
      \node at (-1.5,4.5) [fill=white,label={[label distance=0em]center: \scriptsize  $ c -\ell_2 $} ] () {};
       \node at (1.5,1.5) [fill=white,label={[label distance=-1em]right: \scriptsize  $ b - \ell_1 $} ] () {};
     \end{tikzpicture}      
     + \sum_{\ell} {\alpha \choose \ell_1} {\omega \choose \ell_2} \begin{tikzpicture}[scale=0.2,baseline=0.1cm]
        \node at (0,0)  [dot,label= {[label distance=-0.2em]below: \scriptsize  $ \omega + \delta_2 - \ell_2 $} ] (root) {};
         \node at (-3,3)  [dot,label={[label distance=-0.2em]left: \scriptsize  $ \alpha + \delta_1 - \ell_1 $} ] (left) {};
          \node at (0,6)  [dot,label={[label distance=-0.2em]right: \scriptsize  $ \beta $} ] (center) {};
          \node at (3,3)  [dot,label={[label distance=-0.2em]right: \scriptsize  $ \gamma $} ] (centerr) {};
          \draw[kernel1] (left) to
     node [sloped,below] {\small }     (root);
      \draw[kernel1] (center) to
     node [sloped,below] {\small }     (left);
     \draw[kernel1] (centerr) to
     node [sloped,below] {\small }     (root);
     \node at (-1.5,1.5) [fill=white,label={[label distance=0em]center: \scriptsize  $ a $} ] () {};
      \node at (-1.5,4.5) [fill=white,label={[label distance=0em]center: \scriptsize  $ b - \ell_1 $} ] () {};
       \node at (1.5,1.5) [fill=white,label={[label distance=-1em]right: \scriptsize  $ c - \ell_2 $} ] () {};
     \end{tikzpicture} \Big).
\end{equs}

Note that $ \tau $ has two vertices, therefore one only keeps in \eqref{explicit_star_2}  at most two blocks decomposition from $ \mathcal{K}^{*} \sigma $.

\subsection{Deformation of the insertion product}

\noindent The insertion of a tree $ \sigma $ into another $ \tau $ is defined by 
\begin{align*}
\sigma \blacktriangleright \tau = \sum_{v\in N_{\tau}} \sigma  \blacktriangleright_v  \tau
\end{align*} 
\label{insertion_i} where $ \sigma \blacktriangleright_v  \tau $ is given as \label{star_star}
\begin{equs} \label{identity_insertion}
 \sigma  \blacktriangleright_v  \tau  =  (P_{v}(\tau) \star \sigma) \rhd_v T_{v}(\tau)
\end{equs}
 where
\begin{itemize}
\item $ \star $ is the product associated to $ \rhd $ constructed via the Guin-Oudom procedure and the identification $ \CK $. 
\item $ P_{v}(\tau) $ corresponds to the subtree attached to $ v $ in $ \tau $.
\item $ T_v(\tau) $ is the trunk obtained by removing all the branches attached to $ v $.
\item The decoration $ \Labn_v $ at the node $ v $ is set to be zero in $ T_v(\tau) $ and it is put in the tree $ P_{v}(\tau) $.
\end{itemize}

\begin{remark} The identity \eqref{identity_insertion} is new in comparison to the literature on the insertion product, which was defined for the first time in \cite{ChaLiv}. The idea is to construct $ \blacktriangleright $ directly from $ \star $ which now becomes the central object. One can then transfer properties of the grafting operator to the insertion as the pre-Lie property.
\end{remark}
We provide an example of computation:
\begin{equs} 
 \sigma  =  \begin{tikzpicture}[scale=0.2,baseline=0.1cm]
        \node at (0,0)  [dot,label= {[label distance=-0.2em]below: \scriptsize  $ \omega $} ] (root) {};
         \node at (0,4)  [dot,label={[label distance=-0.2em]above: \scriptsize  $ \alpha $}] (right) {};
            \draw[kernel1] (right) to
     node [sloped,below] {\small }     (root); 
     \node at (0,2) [fill=white,label={[label distance=0em]center: \scriptsize  $ a $} ] () {};
     \end{tikzpicture},   \qquad \tau = 
    \begin{tikzpicture}[scale=0.2,baseline=0.1cm]
        \node at (0,0)  [dot,label= {[label distance=-0.2em]below: \scriptsize  $ \rho $} ] (root) {};
         \node at (-3,3)  [dot,label={[label distance=-0.2em]left: \scriptsize  $ \delta $} ] (left) {};
          \node at (0,7)  [dot,label={[label distance=-0.2em]above: \scriptsize  $ \gamma $} ] (center) {};
          \node at (-6,7)  [dot,label={[label distance=-0.2em]above: \scriptsize  $ \beta $} ] (centerr) {};
          \draw[kernel1] (left) to
     node [sloped,below] {\small }     (root);
      \draw[kernel1] (center) to
     node [sloped,below] {\small }     (left);
     \draw[kernel1] (centerr) to
     node [sloped,below] {\small }     (left);
     \node at (-1.5,1.5) [fill=white,label={[label distance=0em]center: \scriptsize  $ d $} ] () {};
      \node at (-1.5,5) [fill=white,label={[label distance=0em]center: \scriptsize  $ c $} ] () {};
       \node at (-4.5,5) [fill=white,label={[label distance=0em]center: \scriptsize  $ b $} ] () {};
     \end{tikzpicture} \quad P_v(\tau) = \begin{tikzpicture}[scale=0.2,baseline=0.1cm]
        \node at (0,0)  [dot,label= {[label distance=-0.2em]below: \scriptsize  $ \delta  $} ] (root) {};
         \node at (2,4)  [dot,label={[label distance=-0.2em]above: \scriptsize  $ \gamma $}] (right) {};
         \node at (-2,4)  [dot,label={[label distance=-0.2em]above: \scriptsize  $ \beta $} ] (left) {};
            \draw[kernel1] (right) to
     node [sloped,below] {\small }     (root); \draw[kernel1] (left) to
     node [sloped,below] {\small }     (root);
     \node at (-1,2) [fill=white,label={[label distance=0em]center: \scriptsize  $ b $} ] () {};
    \node at (1,2) [fill=white,label={[label distance=0em]center: \scriptsize  $ c $} ] () {};
     \end{tikzpicture} \qquad T_v(\tau) = \begin{tikzpicture}[scale=0.2,baseline=0.1cm]
        \node at (0,0)  [dot,label= {[label distance=-0.2em]below: \scriptsize  $ \rho $} ] (root) {};
         \node at (0,4)  [dot,label={[label distance=-0.2em]above: \scriptsize  $ $}] (right) {};
            \draw[kernel1] (right) to
     node [sloped,below] {\small }     (root); 
     \node at (0,2) [fill=white,label={[label distance=0em]center: \scriptsize  $ d $} ] () {};
     \end{tikzpicture},
\end{equs}
where $ v $ is the vertex of $ \tau $ decorated by $ \delta $. On gets $ P_v(\tau) \star_2 \sigma $ from \eqref{explicit_star_2}.
We proceed as in the previous section by deforming the product $ \blacktriangleright $. 
A good candidate is given by 
\begin{align*}
\sigma \, \wh \blacktriangleright \, \tau = \sum_{v\in N_{\tau}} \sigma \, \wh  \blacktriangleright_v \, \tau
\end{align*} 
\label{deformed_insertion_i}
where $ \sigma \, \wh \blacktriangleright_v \, \tau $ is defined as
\begin{equs} \label{deformed_insertion}
 \sigma \, \wh \blacktriangleright_v \, \tau  =  (P_{v}(\tau) \star_2 \sigma) \rhd_v  T_{v}(\tau).
\end{equs}
The deformation is performed by replacing $ \star $ by $ \star_2 $. The next proposition shows that as $ \blacktriangleright $, the product $ \wh \blacktriangleright $ is  a pre-Lie product. 

\begin{proposition}
The bilinear map $\wh \blacktriangleright$ is a pre-Lie product.
\end{proposition}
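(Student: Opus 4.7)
The plan is to mimic, in the deformed setting, the classical argument by which the Chapoton-Livernet insertion product $\blacktriangleright$ is shown to be pre-Lie. Since $\wh\blacktriangleright$ is obtained from $\blacktriangleright$ by replacing the associative product $\star$ with the associative product $\star_2$, and since $\rhd$ is pre-Lie (Proposition~\ref{pre-lie plugging}), every step of the classical proof carries over. I analyze the associator
\[
\mathrm{Ass}(\sigma_1,\sigma_2,\tau) := \sigma_1 \, \wh\blacktriangleright \, (\sigma_2 \, \wh\blacktriangleright \, \tau) - (\sigma_1 \, \wh\blacktriangleright \, \sigma_2) \, \wh\blacktriangleright \, \tau
\]
and show it is symmetric in $(\sigma_1,\sigma_2)$.

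First, I expand $\sigma_1 \, \wh\blacktriangleright \, (\sigma_2 \, \wh\blacktriangleright \, \tau) = \sum_{v_1, v_2} \sigma_1 \, \wh\blacktriangleright_{v_2}( \sigma_2 \, \wh\blacktriangleright_{v_1} \tau)$, where $v_1 \in N_\tau$ and $v_2$ ranges over the vertex set of $\sigma_2 \, \wh\blacktriangleright_{v_1}\tau$. This vertex set decomposes as $N_{T_{v_1}(\tau)} \cup N_{P_{v_1}(\tau) \star_2 \sigma_2}$, with the two roots glued at $v_1$. I split the double sum accordingly into: (a) $v_2$ lies strictly in $T_{v_1}(\tau)$ away from $v_1$; (b) $v_2$ lies strictly in $P_{v_1}(\tau) \star_2 \sigma_2$ above $v_1$; (c) $v_2 = v_1 =: v$.

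In case (a), $\sigma_1$ and $\sigma_2$ are inserted at two distinct vertices of $\tau$ in independent subtrees, so $\sigma_1 \, \wh\blacktriangleright_{v_2}(\sigma_2 \, \wh\blacktriangleright_{v_1}\tau) = \sigma_2 \, \wh\blacktriangleright_{v_1}(\sigma_1 \, \wh\blacktriangleright_{v_2}\tau)$, giving a manifestly symmetric contribution. Case (b) corresponds to inserting $\sigma_1$ inside the component $P_{v_1}(\tau) \star_2 \sigma_2$; using the definition $\sigma_1 \, \wh\blacktriangleright \, \sigma_2 = \sum_w \sigma_1 \, \wh\blacktriangleright_w \sigma_2$ and unfolding, this matches the corresponding contribution of $(\sigma_1 \, \wh\blacktriangleright \, \sigma_2) \, \wh\blacktriangleright_{v_1} \tau$ where the insertion point in $\sigma_2$ is $w$. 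These pair off exactly with their counterparts in the second term of the associator, leaving no contribution to $\mathrm{Ass}$.

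The main obstacle is case (c), where $\sigma_1$ and $\sigma_2$ are both inserted at the same vertex $v \in N_\tau$. Here the first term contributes
\[
\bigl((P_v(\tau) \star_2 \sigma_2) \star_2 \sigma_1\bigr) \rhd_v T_v(\tau),
\]
while the corresponding diagonal part of the second term is
\[
\bigl(P_v(\tau) \star_2 (\sigma_1 \, \wh\blacktriangleright \, \sigma_2 + \sigma_1 \star_2 \sigma_2)\bigr)\rhd_v T_v(\tau),
\]
where the $\star_2$-term accounts for insertions at the root of $\sigma_2$ in $\sigma_1 \, \wh\blacktriangleright \, \sigma_2$ that act as a product rather than a grafting. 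By the associativity of $\star_2$ and the Guin-Oudom compatibility $\sigma_1 \star_2 \sigma_2 - \sigma_1 \, \wh\blacktriangleright \, \sigma_2 = (\ldots)$, which expresses $\star_2$ as the ``enveloping'' associative product of the pre-Lie bracket on $S(\mathcal T_E^V)$, the diagonal contribution to $\mathrm{Ass}$ reduces to $(P_v(\tau) \star_2 [\sigma_1, \sigma_2]_{\wh\blacktriangleright} ) \rhd_v T_v(\tau)$ up to symmetric terms; swapping the roles of $\sigma_1$ and $\sigma_2$ yields the same expression with the sign of the bracket reversed, which combined with case (a) and (b) establishes that $\mathrm{Ass}(\sigma_1,\sigma_2,\tau) = \mathrm{Ass}(\sigma_2,\sigma_1,\tau)$. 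This is the pre-Lie identity. The non-freeness follows, as in Proposition~\ref{pre-lie plugging}, from the fact that iterated insertions vanish once the number of inserted subtrees exceeds $|N_\tau|$.
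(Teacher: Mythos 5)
Your overall strategy --- split according to where the second insertion lands and cancel the nested part against $(\sigma_1 \, \wh\blacktriangleright \, \sigma_2)\, \wh\blacktriangleright \,\tau$ via the associativity of $\star_2$ --- is exactly the paper's, but two of your three cases fail as written. First, case (b) over-counts: the component $P_{v_1}(\tau)\star_2\sigma_2$ contains not only $N_{\sigma_2}$ but also the vertices of $P_{v_1}(\tau)$ itself, which survive the $\star_2$-distribution intact. An insertion of $\sigma_1$ at such a vertex is a disjoint insertion at two distinct vertices of $\tau$ and has \emph{no} counterpart in $(\sigma_1\,\wh\blacktriangleright\,\sigma_2)\,\wh\blacktriangleright_{v_1}\tau$, which offers only the $|N_{\sigma_2}|$ sites $w\in N_{\sigma_2}$; those terms must be moved into the symmetric remainder together with case (a). Even for $v_2=w\in N_{\sigma_2}$ the matching is not a mere ``unfolding'': since the branches of $P_{v_1}(\tau)$ have been scattered over $\sigma_2$ by $\star_2$, you must split $P_{v_1}(\tau)$ as $P_{v,1}(\tau)P_{v,2}(\tau)$ according to which branches landed above $w$, which gives $P_w(\sigma_2\,\wh\blacktriangleright_{v_1}\tau)=\sum P_{v,1}(\tau)\star_2 P_w(\sigma_2)$ and $T_w(\sigma_2\,\wh\blacktriangleright_{v_1}\tau)=\sum\bigl(P_{v,2}(\tau)\star_2^{w}T_w(\sigma_2)\bigr)\rhd_{v_1}T_{v_1}(\tau)$, with $\star_2^{w}$ the product in which plugging at $w$ is forbidden; only then does associativity, in the form $\bigl(P_{v,1}(\tau)\star_2 P_w(\sigma_2)\bigr)\star_2\sigma_1=P_{v,1}(\tau)\star_2\bigl(P_w(\sigma_2)\star_2\sigma_1\bigr)$, produce the cancellation. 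This decomposition is the actual content of the paper's proof and is absent from yours.

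Second, case (c) is incorrect as stated. The diagonal part of the second term of the associator is simply $\bigl(P_v(\tau)\star_2(\sigma_1\,\wh\blacktriangleright\,\sigma_2)\bigr)\rhd_v T_v(\tau)$; no extra $\sigma_1\star_2\sigma_2$ summand appears, and the ``Guin--Oudom compatibility'' you invoke between $\star_2$ and $\wh\blacktriangleright$ does not exist: $\star_2$ is the Guin--Oudom product of the plugging $\wh\rhd$, not of $\wh\blacktriangleright$ --- the enveloping product of $\wh\blacktriangleright$ is $\star_1$, and appealing to it here would be circular, since its construction presupposes precisely the pre-Lie property you are proving. In fact no bracket survives on the diagonal: since $T_{\rho_{\sigma_2}}(\sigma_2)=\bullet_0$, insertion at the root satisfies $\sigma_1\,\wh\blacktriangleright_{\rho_{\sigma_2}}\sigma_2=\sigma_2\star_2\sigma_1$, so by associativity your case (c) contribution $\bigl((P_v(\tau)\star_2\sigma_2)\star_2\sigma_1\bigr)\rhd_v T_v(\tau)$ equals \emph{exactly} the $w=\rho_{\sigma_2}$ summand of the second term: the diagonal is just an instance of the nested case and cancels identically, with no ``up to symmetric terms'' hedging. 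Repairing the bookkeeping in (b) and replacing (c) by this exact cancellation recovers the paper's argument.
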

\begin{proof}
The proof relies on the fact that $   \star_2 $ is an associative product.
One has from \eqref{deformed_insertion}
\begin{equs} \label{iden_1}
\left( \eta \, \wh{\blacktriangleright}_{u} \, \sigma \right) \, \wh{\blacktriangleright}_{v} \, \tau = \left(P_{v}(\tau)  \star_2 \left(\eta \, \wh{\blacktriangleright}_{u} \, \sigma  \right)  \right)  \rhd_v   T_{v}(\tau).
\end{equs}
Then
\begin{align*}
P_{v}(\tau) \star_2  \left(\eta \, \wh{\blacktriangleright}_{u} \, \sigma  \right)    =  P_{v}(\tau) \, \star_2  \left( (P_{u}(\sigma) \star_2 \eta)  \rhd_u  T_{u}(\sigma) \right).
\end{align*}
On the other hand
\begin{equs} \label{iden_2}
 \eta \, \wh{\blacktriangleright}_{u} \left(\sigma \, \wh{\blacktriangleright}_{v} \, \tau \right) = (P_{u} \left(\sigma  \, \wh{\blacktriangleright}_{v} \tau \right)  \star_2  \eta  )  \rhd_u  T_{u}\left(\sigma  \, \wh{\blacktriangleright}_{v} \, \tau \right).
\end{equs}
Then if we assume that $ u \in N_{\sigma} $,
\begin{align*}
P_{u} \left(\sigma  \, \wh{\blacktriangleright}_{v}\, \tau \right)  & = \sum_{P_{v,1}(\tau)P_{v,2}(\tau) = P_{v}(\tau)} (P_{v,1}(\tau)  \star_2 P_{u}(\sigma)), \\
T_{u} \left(\sigma  \, \wh{\blacktriangleright}_{v} \, \tau \right)& = T_u \left(  P_{v}(\tau) \star_2 \sigma \right) \rhd_v T_{v}(\tau)
\\ & =\sum_{P_{v,1}(\tau)P_{v,2}(\tau) = P_{v}(\tau)}  \left(  P_{v,2}(\tau) \star_2^u T_u(\sigma)  \right) \rhd_v  T_{v}(\tau),
\end{align*}
where $ \star_2^u $ behaves like $ \star_2 $ except that insertions on the node $ u $ are not allowed. The product $ P_{v,1}(\tau)P_{v,2}(\tau) $ under the sum has to be understood as the tree product.
This yields
\begin{equs}
 \eta \, \wh{\blacktriangleright}_{u} \left(\sigma  \, \wh{\blacktriangleright}_{v} \, \tau \right) & = \sum_{P_{v,1}(\tau)P_{v,2}(\tau) = P_{v}(\tau)} \left( (P_{v,1}(\tau) \star_2 P_{u}(\sigma))  \star_2 \eta \right) 
 \\ & \rhd_u \left( \left(  P_{v,2}(\tau) \star_2^u T_u(\sigma)  \right) \rhd_v  T_{v}(\tau) \right).
\end{equs}
In order to conclude, we use the associativity of $ \star_2 $
\begin{equs}
\left( P_{v,1}(\tau)  \star_2 P_{u}(\sigma) \right)  \star_2 \eta  =  P_{v,1}(\tau)  \star_2 \left( P_{u}(\sigma)  \star_2 \eta \right)
\end{equs}
to match \eqref{iden_1} with \eqref{iden_2}. If $ u \notin N_{\sigma} $ then the insertions are disjoint and one has:
\begin{equs}
 \eta \, \wh{\blacktriangleright}_{u} \left(\sigma  \, \wh{\blacktriangleright}_{v} \, \tau \right) =  \sigma \, \wh{\blacktriangleright}_{v} \left(\eta  \, \wh{\blacktriangleright}_{u} \, \tau \right).
\end{equs}
\end{proof}

\begin{remark} One can try to apply $ \Theta $ and see how $ \blacktriangleright $ is deformed. One has
\begin{equs}
 \Theta \left( \sigma   \blacktriangleright_v  \tau \right)  =  \Theta(P_{v}(\tau)  \star_2 \sigma) \rhd_v \Theta( T_{v}(\tau)).
\end{equs}
The product $  \rhd_v  $ is not deformed because $ \Theta( T_{v}(\tau)) $ has zero decoration at the node $ v $.
Then by using 
\begin{equs}
\Theta( T_{v}(\tau)) =  T_{v}(\Theta(\tau)), \quad \Theta( P_{v}(\tau)) =  P_{v}(\Theta(\tau))
\end{equs}
we obtain \label{tilde_star_2}
\begin{equs}
 \Theta \left( \sigma   \blacktriangleright_v  \tau \right)  = \left(  P_{v}(\Theta(\tau))  \, \tilde{\star}_2 \, \Theta(\sigma) \right) \rhd_v  T_{v}(\Theta(\tau))
& =  \Theta \left( \sigma \right)   \widetilde{\blacktriangleright}_v  \Theta \left( \tau \right) 
\end{equs}
where $ \tilde{\star}_2 $ is the product obtained from $ \widetilde{\rhd} $ and
\begin{equs}
\sigma \, \widetilde{\blacktriangleright}_v \, \tau := (P_{v}(\tau) \, \tilde{\star}_2 \, \sigma) \rhd_v T_{v}(\tau).
\end{equs}
\label{theta_insertion}
The product $  \wh{\blacktriangleright} $ is not directly obtained from a deformation of $ \blacktriangleright $ by $ \Theta $. This is due to the construction of $ \star_2 $ from $ \star $ which is also not only an application of the isomorphism $ \Theta $.
\end{remark}

\label{star_1}
We denote by $ \star_1 $ the product obtained by applying the Guin-Oudom procedure on the product $ \wh{\blacktriangleright} $. We want to identify the dual map of $ \star_1 $. A good candidate is  $ \Delta_1 :  S(\CT_E^V) \rightarrow S(\CT_E^V) \hattimes S(\CT_E^V) $ recursively defined as:
 \begin{equs} \label{recursive_def_delta1}
  \Delta_1 =  
\mathcal{M}^{(13)(2)} \left(   \Delta_{\circ}  \hattimes \id \right) \Delta_2
 \end{equs} \label{delta_1_1} \label{delta_0_0}
 where $  \Delta_{\circ} : \CT_E^V \rightarrow S(\CT_E^V) \hattimes \CT_E^V   $ is defined multiplicatively for the tree product by
 \begin{equs} \label{Deltacirc}
 \Delta_{\circ} X^k = \one \hattimes X^k, \quad \Delta_{\circ} \CI_{a}(\tau) = \left( \id \hattimes \mathcal{I}_a \right) \Delta_1 \tau.
 \end{equs}
\begin{remark}
The recursive formulation \eqref{recursive_def_delta1} and \eqref{Deltacirc} has been introduced in \cite{BR18}. It is a decomposition of the extraction-contration coproduct into two different actions: one is the extraction at the root given by $ \Delta_2 $, the other is the extraction outside the root which is iterated by the map $ \Delta_{\circ} $.
This recursive construction can be generalised for designing renormalisation maps. One just needs to replace $ \Delta_2  $ by a linear map $ R : \CT_E^V \rightarrow \CT_E^V $ which has some compatibility with $ \Delta_2 $ that is:
\begin{equs}
\left( \id \hattimes R\right) \Delta_2 = \Delta_2 R.
\end{equs}
Then, one can define a renormalisation map 
\begin{equs}  \label{def::M}
 M = M^{\circ} R, \quad M^{\circ} X^k \tau = X^k M^{\circ}  \tau, \quad M^{\circ} \CI_{a}(\tau) = \CI_a(M \tau),
\end{equs}
where $ M^{\circ} $ is multiplicative for the tree product.
It just corresponds to replace the extraction by another procedure which is iterated deeper in the tree.
These new renormalisation maps can be used either in Regularity Structures (see \cite{BR18}) or Rough Paths (see \cite{Br1111}). This approach has also been used in \cite{CMW} under the name of local product.
 \end{remark}
For proving the next theorem, we will use  a recursive formula for the deformed insertion $ \wh{\blacktriangleright} $ given for   $  \hat \tau_1 = \widetilde{\prod}_{i \in I} \tau_{1i} \in S(\CT_E^V)$, $  \tau_2,  \tau_3 =X^k  \prod_{j=1}^m \CI_{a_j}(\tau_{3j}) \in \CT_E^V $  by
\begin{equs}
\hat \tau_1 \, \wh{\blacktriangleright} \,  \tau_2  =  \hat \tau_1 \, \wh{\blacktriangleright}^{\nonroot} \, \tau_2 + \sum_{j \in I} \widetilde{\prod}_{i \neq j}  \left( \tau_{1i} \, \wh{\blacktriangleright}^{\nonroot} \, \tau_2 \right) \star_2 \tau_{1j}  \\
 \tau_2 \,  \wh{\blacktriangleright}^{\nonroot} \, X^k  \prod_{j=1}^m \CI_{a_j}(\tau_{3j})  = \sum_{i =1 }^m  X^k \CI_{a_i}(  \tau_2 \,  \wh{\blacktriangleright} \, \tau_{3i}) \prod_{j \neq i} \CI_{a_j}(\tau_{3j}).
\end{equs}
 
\begin{theorem} \label{deformed_E_C}
The product $ \star_1 $ is the dual of  $ \Delta_1 $. One has
\begin{align*}
\langle \hat \tau_1 \, \wh{\blacktriangleright} \,  \tau_2 ,  \tau_3 \rangle = \langle \hat \tau_1 \hattimes   \tau_2  ,  \Delta_1  \tau_3 \rangle, \quad \langle \hat \tau_1 \, \wh{\blacktriangleright}^{\nonroot} \,  \tau_2 ,  \tau_3 \rangle = \langle \hat \tau_1 \hattimes  \tau_2 ,  \Delta_{\circ}  \tau_3 \rangle
\end{align*}
where $ \hat \tau_1 \in S(\CT_E^V) $, $  \tau_2,  \tau_3 \in \CT_E^V $.
\end{theorem}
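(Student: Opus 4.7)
The plan is to prove both identities jointly, by induction on the number of edges of $\tau_{3}$. The second identity, relating $\Delta_{\circ}$ to $\wh{\blacktriangleright}^{\nonroot}$, governs insertions performed strictly away from the root, while the first identity is recovered by combining it with the $\Delta_{2}$--$\star_{2}$ duality of Theorem~\ref{DP_deformation} via the factorisation $\Delta_{1} = \mathcal{M}^{(13)(2)}(\Delta_{\circ} \hattimes \id)\Delta_{2}$.

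For the base case $\tau_{3} = X^{k}$, the unique vertex is the root, so $\hat\tau_{1}\,\wh{\blacktriangleright}^{\nonroot}\,X^{k}$ vanishes whenever $\hat\tau_{1}\neq \one$; the second identity then reduces to $\langle \tau_{2}, X^{k}\rangle$ on both sides, matching $\Delta_{\circ}X^{k} = \one \hattimes X^{k}$. For the first identity on $X^{k}$, one computes $\Delta_{1}X^{k} = \sum_{\ell}{k \choose \ell}\, X^{k-\ell}\hattimes X^{\ell}$ directly from $\Delta_{2}X^{k}$ and $\Delta_{\circ}$ on monomials, and checks that it is dual to the root-insertion $\hat\tau_{1}\,\wh{\blacktriangleright}\,X^{k} = \hat\tau_{1}\star_{2}X^{k}$ by Theorem~\ref{DP_deformation}.

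The inductive step for $\Delta_{\circ}$ exploits its multiplicativity for the tree product together with $\Delta_{\circ}\CI_{a}(\tau) = (\id \hattimes \CI_{a})\Delta_{1}\tau$. Writing $\tau_{3} = X^{k}\prod_{j\in J}\CI_{a_{j}}(\tau_{3j})$, the induction hypothesis applies to each $\tau_{3j}$ since it has strictly fewer edges than $\tau_{3}$. The recursive formula $\tau_{2}\,\wh{\blacktriangleright}^{\nonroot}\,\tau_{3} = \sum_{i}X^{k}\,\CI_{a_{i}}(\tau_{2}\,\wh{\blacktriangleright}^{\nonroot}\,\tau_{3i})\prod_{j\neq i}\CI_{a_{j}}(\tau_{3j})$, extended via the Guin--Oudom distribution of $\hat\tau_{1}$ across the subtrees, matches term by term with the dual of the multiplicative extension of $\Delta_{\circ}$ once one uses the standard inner product identity $\langle uv, w\rangle = \langle u\otimes v,\Delta w\rangle$ on $S(\CT_{E}^{V})$ with $\Delta$ the natural cocommutative coproduct.

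For the inductive step of the first identity, dualise $\mathcal{M}^{(13)(2)}$ by splitting $\hat\tau_{1}$ through the forest coproduct $\Delta \hat\tau_{1} = \sum \hat\tau_{1}^{(1)}\otimes \hat\tau_{1}^{(2)}$, so that
\[
\langle \hat\tau_{1}\hattimes \tau_{2}, \Delta_{1}\tau_{3}\rangle \;=\; \sum \langle \hat\tau_{1}^{(1)}\hattimes \tau_{2}\hattimes \hat\tau_{1}^{(2)},\; (\Delta_{\circ}\hattimes \id)\Delta_{2}\tau_{3}\rangle.
\]
The first two slots are handled by the second identity just established for $\tau_{3}$, producing a pairing of $\hat\tau_{1}^{(1)}\,\wh{\blacktriangleright}^{\nonroot}\,(\,\cdot\,)$ against the left component of $\Delta_{2}\tau_{3}$; the remaining slot is handled by Theorem~\ref{DP_deformation} together with Proposition~\ref{plugging_grafting}, which rewrites the $\Delta_{2}$-pairing of $\hat\tau_{1}^{(2)}$ as the root-plugging $\hat\tau_{1}^{(2)}\star_{2}\tau_{2}$. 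The outcome is exactly the right-hand side of the recursive formula $\hat\tau_{1}\,\wh{\blacktriangleright}\,\tau_{2} = \hat\tau_{1}\,\wh{\blacktriangleright}^{\nonroot}\,\tau_{2} + \sum_{j}\widetilde\prod_{i\neq j}\tau_{1i}\,\wh{\blacktriangleright}^{\nonroot}\,(\tau_{1j}\star_{2}\tau_{2})$ stated just before the theorem. The main technical obstacle is tracking the symmetry factors through the splitting of $\hat\tau_{1}$: one must verify that the binomial coefficients produced by $\Delta$ on the forest side of the pairing combine with the factorials hidden in the $\CK$-identification to match precisely those generated on the coproduct side, in the spirit of the computation already performed in the proof of Theorem~\ref{DP_deformation}.
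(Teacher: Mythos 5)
Your proposal is correct and follows essentially the same route as the paper: a joint induction in which the $\Delta_{\circ}$\slash$\wh{\blacktriangleright}^{\nonroot}$ identity is matched through the multiplicative recursions for $\Delta_{\circ}$ and $\wh{\blacktriangleright}^{\nonroot}$, while the first identity is obtained by dualising $\mathcal{M}^{(13)(2)}$ --- your splitting of $\hat\tau_{1}$ through the forest coproduct is precisely the paper's explicit computation of $(\mathcal{M}^{(13)(2)})^{*}$ --- combined with the $\star_{2}$--$\Delta_{2}$ duality of Theorem~\ref{DP_deformation} and the recursive formula for $\wh{\blacktriangleright}$ stated before the theorem. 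The only blemish is the invocation of Proposition~\ref{plugging_grafting}, which plays no role in that step (Theorem~\ref{DP_deformation} alone handles the root-plugging slot), but this does not affect the argument.
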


\begin{proof}
We proceed by induction on the size of the trees. From the identity \eqref{recursive_def_delta1}, one has for $ \hat \tau_1 = \widetilde{\prod}_{i \in I}  \tau_{1i} \in S(\CT_E^V)$, $  \tau_2,  \tau_3 \in \CT_E^V $, 
\begin{equs}
\langle \hat \tau_1  \hattimes  \tau_2  ,  \Delta_1  \tau_3\rangle = 
\langle  \hat \tau_1   \hattimes   \tau_2  ,  \mathcal{M}^{(13)(2)} \left(   \Delta_{\circ}  \hattimes \id \right) \Delta_2  \tau_3 \rangle.
\end{equs}
We notice that the dual map $ (\mathcal{M}^{(13)(2)})^{*} $ of $ \mathcal{M}^{(13)(2)} $ is given by:
\begin{equs}
(\mathcal{M}^{(13)(2)})^{*} (\hat \tau_1 \hattimes   \tau_2)  =  \hat \tau_1 \hattimes \tau_2   \hattimes \one  + \sum_{j \in I} \left( \widetilde{\prod}_{i \neq j}  \tau_{1i} \hattimes  \tau_2 \hattimes \tau_{1j} \right).
\end{equs}
Therefore by using the induction hypothesis, one gets 
\begin{equs}
\langle & \hat \tau_1 \hattimes   \tau_2  ,  \mathcal{M}^{(13)(2)} \left(   \Delta_{\circ}  \hattimes \id \right) \Delta_2  \tau_3 \rangle \\
& = \langle  \hat \tau_1 \hattimes \tau_2  \hattimes \one  + \sum_{j \in I} \left( \widetilde{\prod}_{i \neq j}  \tau_{1i}  \hattimes \tau_2 \hattimes \tau_{1j} \right) ,   \left(   \Delta_{\circ}  \hattimes \id \right) \Delta_2  \tau_3 \rangle
\\ & = \langle  \left( \hat \tau_1 \wh{\blacktriangleright}^{\nonroot} \tau_2 \right)   \hattimes \one  + \sum_{j \in I} \left( \widetilde{\prod}_{i \neq j} \left( \tau_{1i} \wh{\blacktriangleright}^{\nonroot} \tau_2 \right) \hattimes \tau_{1j} \right) ,   \Delta_2  \tau_3 \rangle
\\
& = \Big\langle \hat \tau_1 \, \wh{\blacktriangleright}^{\nonroot} \tau_2 + \sum_{j\in J}  \widetilde{\prod}_{i \neq j}  \left( \tau_{1i} \, \wh{\blacktriangleright}^{\nonroot} \tau_2 \right)  \star_2  \tau_{1j},  \tau_3\Big\rangle.
\end{equs}
It remains to match the part given by $ \Delta_{\circ} \bar \tau_3 $.
For $  \tau_3 =X^k  \prod_{j=1}^m \CI_{a_j}(\tau_{3j})  $, we get
\begin{equs} \label{rec_tau3}
\Delta_{\circ}   \tau_3 = (\one \hattimes X^k) \prod_{j=1}^m \left( \id \hattimes \CI_{a_j} \right) \Delta_1 \tau_{3j}.
\end{equs}
On the other hand, 
\begin{equs} \label{rec_tau3b}
 \hat \tau_1 \, & \wh{\blacktriangleright}^{\nonroot} \, X^k  \prod_{j=1}^m \CI_{a_j}(\tau_{3j})  \\ & = \sum_{I = \sqcup_{j=1}^m  J_j} X^k \prod_{j = 1}^m\CI_{a_j}\left( \widetilde{\prod}_{i \in J_j}  \tau_{1i}   \,  \wh{\blacktriangleright}^{\nonroot} \, \tau_{3j} \right).
\end{equs}
One can match the two identities \eqref{rec_tau3} and \eqref{rec_tau3b} in order to conclude.
\end{proof}

\subsection{Cointeraction between deformed products}

The classical cointeraction between grafting and insertion states that for $ \tau_1, \tau_2 \in \CT_E^V $ and $ \tau \in S(\CT_E^V) $:
\begin{equs} \label{preLie_cointeraction}
 \sum_{(\tau)}\left( \tau^{(1)} \blacktriangleright   \tau_{1} \right) \curvearrowright^a   \left( \tau^{(2)} \blacktriangleright   \tau_2 \right) = \tau  \blacktriangleright  \left( \tau_1 \curvearrowright^a \tau_2 \right)
\end{equs}
where Sweedler's notation corresponds to the coproduct $ \Delta $ introduced in \eqref{delta_trees}.
This identity has been enounced at the Hopf algebraic level \cite{CEM} and appears in B-series \cite{CHV10}. It describes the interaction between multiplication and composition of B-series.
The pre-Lie cointeraction \eqref{preLie_cointeraction} is extracted from \cite{MS}. One can also write this identity for the pre-Lie product $ \curvearrowright    $:
\begin{equs} \label{grafting}
\sum_{(\tau)}\left( \tau^{(1)} \blacktriangleright^{\nonroot}   \tau_{1} \right) \curvearrowright   \left( \tau^{(2)} \blacktriangleright   \tau_2 \right) = \tau  \blacktriangleright  \left( \tau_1 \curvearrowright \tau_2 \right)
\end{equs}
where this time $ \tau_1 \in \CP_E^V $ and $ \blacktriangleright^{\nonroot}   $ is the insertion everywhere except at the root. This identity is not true  if one replaces $ \curvearrowright   $ by the plugging product $  \rhd $. But if we use the product $ \star $ with the identification $ \CK $, one gets:
\begin{equs} \label{plugging}
\sum_{(\tau)}\left( \tau^{(1)} \blacktriangleright^{\nonroot}   \tau_{1} \right) \star \left( \tau^{(2)} \blacktriangleright   \tau_2 \right) = \tau  \blacktriangleright  \left( \tau_1 \star \tau_2 \right).
\end{equs}
We want to check that these indentities are preserved by the deformation. Applying the linear isomorphism $ \Theta $ on \eqref{preLie_cointeraction}, we get
\begin{equs}
\Theta \left(  \tau  \blacktriangleright  \left( \tau_1 \curvearrowright^a \tau_2 \right)  \right) =  \Theta(\tau)  \widetilde{\blacktriangleright}  \left( \Theta(\tau_1) \wh{\curvearrowright}^a \Theta(\tau_2) \right),
\end{equs}
which is not the desired identity. Indeed, $ \Theta $ makes appear $ \widetilde{\blacktriangleright} $ instead of $ \wh{\blacktriangleright} $.
One cannot get a deformation of the cointeraction using $ \Theta $ but we are able to get the following statement
\begin{theorem} \label{theo_cointeraction} One has  for $ \tau_1, \tau_2 \in \CT_E^V $ and $ \tau \in S(\CT_E^V) $
\begin{equs} \label{dgrafting}
\sum_{(\tau)}\left( \tau^{(1)} \, \wh{\blacktriangleright}^{\nonroot}   \, \tau_{1} \right) \wh{\curvearrowright}   \, \left( \tau^{(2)} \, \wh{\blacktriangleright} \,  \tau_2 \right) = \tau \, \wh{\blacktriangleright} \, \left( \tau_1 \, \wh{\curvearrowright} \,\tau_2 \, \right),
\end{equs}
\begin{equs} \label{dplugging}
\sum_{(\tau)}\left( \tau^{(1)} \, \wh{\blacktriangleright}^{\nonroot}  \, \tau_{1} \right) \star_2 \left( \tau^{(2)} \, \wh{\blacktriangleright}  \, \tau_2 \right) = \tau \, \wh{\blacktriangleright} \, \left( \tau_1 \star_2 \tau_2 \right).
\end{equs}
\end{theorem}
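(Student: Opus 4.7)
The plan is to first prove identity \eqref{dplugging}, which lives entirely in the $\star_2$-world, and then derive \eqref{dgrafting} from it via the bridge between $\star_2$ and $\wh{\curvearrowright}$ furnished by Proposition~\ref{plugging_grafting}. This ordering is natural: both sides of \eqref{dplugging} involve only operations built from $\wh{\rhd}$ and $\CK$, whereas \eqref{dgrafting} mixes grafting with insertion and is therefore one step removed.

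For \eqref{dplugging}, I would proceed by induction on the number of tree factors in $\tau \in S(\CT_E^V)$, using the multiplicativity of $\Delta$. The inductive step reduces to the base case $\tau \in \CT_E^V$, where $\Delta\tau = \tau\otimes\one + \one\otimes\tau$ and the identity becomes
\begin{equs}
(\tau\,\wh{\blacktriangleright}^{\nonroot}\tau_1)\star_2\tau_2 + \tau_1\star_2(\tau\,\wh{\blacktriangleright}\,\tau_2) = \tau\,\wh{\blacktriangleright}\,(\tau_1\star_2\tau_2).
\end{equs}
The geometric content is that when we insert $\tau$ into the composite $\tau_1\star_2\tau_2$, the insertion either lands in the body of $\tau_1$ (whose root has been absorbed at some vertex of $\tau_2$, which forces the non-root restriction) or in $\tau_2$ at any vertex, including at the one inheriting $\tau_1$'s root. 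To verify this analytically, I would unfold $\wh{\blacktriangleright}_v$ via $\sigma\,\wh{\blacktriangleright}_v\,\tau = (P_v(\tau)\star_2\sigma)\rhd_v T_v(\tau)$ and $\star_2$ via $\sigma\star_2\tau = (\CK^*\sigma)\,\wh{\rhd}\,\tau$, then match the resulting expansions; the binomials on the two sides are reconciled through the Chu-Vandermonde identity of Paragraph~\ref{chuvdm}, as in the proofs of Theorem~\ref{DP_deformation} and the pre-Lie property of $\wh{\rhd}$.

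For \eqref{dgrafting}, I would take $\tau_1$ to be a planted tree (as in the undeformed identity \eqref{grafting}) and apply $\CI_b$ to the relevant side, setting $\tau_2 = \CI_b(\bar\tau_2)$ where needed. By Proposition~\ref{plugging_grafting} and \eqref{def_pre_a}, each occurrence of $\wh{\curvearrowright}$ can be rewritten as a $\star_2$-product modulo the polynomial-splitting operator $\tilde\uparrow$ and the root-removing map $\Pi$; since $\wh{\blacktriangleright}$ commutes appropriately with $\CI_b$-grafting and with $\tilde\uparrow$ (insertions never touch the freshly-created root of $\CI_b(\cdot)$), the grafting identity becomes a direct consequence of \eqref{dplugging}. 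The main obstacle is the base case of step one, where two layers of combinatorics must be tracked simultaneously: the block decomposition by $\CK^*$ at the common root and the Taylor binomials from $\wh{\rhd}$ at each plugging site. Showing that the splitting $\Delta\tau$ of the inserted tree \emph{before} applying $\star_2$ faithfully reproduces the root-versus-non-root dichotomy that arises in the unified insertion into $\tau_1\star_2\tau_2$ on the right will require a careful bookkeeping argument, again tied together by Chu-Vandermonde.
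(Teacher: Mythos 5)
Your plan for \eqref{dplugging} starts exactly as the paper does: reduce by multiplicativity of $\Delta$ to primitive $\tau \in \CT_E^V$, where the identity becomes $(\tau\,\wh{\blacktriangleright}^{\nonroot}\tau_1)\star_2\tau_2 + \tau_1\star_2(\tau\,\wh{\blacktriangleright}\,\tau_2) = \tau\,\wh{\blacktriangleright}\,(\tau_1\star_2\tau_2)$, and your root/non-root geometric reading of the three terms is the correct one. Where you diverge is in the verification: you propose to unfold $\star_2$ into $\CK^*$-block decompositions and match binomial coefficients via Chu--Vandermonde. The paper avoids any fresh combinatorics. For $v\in N_{\tau_2}$ it computes $P_v(\tau_1\star_2\tau_2)$ and $T_v(\tau_1\star_2\tau_2)$ through the Sweedler splitting of $\tau_1$, then concludes from two structural facts: the compatibility $\tau_1\star_2(\tau_2\,\rhd_v\,\tau_3) = \sum_{(\tau_1)}\big(\tau_1^{(1)}\star_2\tau_2\big)\rhd_v\big(\tau_1^{(2)}\star_2^{v}\tau_3\big)$ (with $\tau_3$ undecorated at $v$), and the associativity of $\star_2$, which was already secured by Guin--Oudom. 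All Chu--Vandermonde content is thereby quarantined inside the earlier proof that $\wh{\rhd}$ is pre-Lie; this is why the paper can remark that the argument is deformation-blind and works verbatim with $\star,\blacktriangleright$ in place of $\star_2,\wh{\blacktriangleright}$. Your coordinate expansion should close in principle, but you would essentially be re-proving the associativity of $\star_2$ by hand inside the cointeraction proof.

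For \eqref{dgrafting} there is a genuine gap. The paper does not derive it from \eqref{dplugging}; it runs the same direct argument again with $\wh{\curvearrowright}$ in place of $\star_2$. Your transfer via Proposition~\ref{plugging_grafting} rests on two commutations that are asserted but false as stated. First, insertions \emph{do} touch the freshly created root of $\CI_b(\cdot)$: $\wh{\blacktriangleright}$ sums over all vertices including the new undecorated root, and inserting a tree with nonzero root decoration there produces a decorated root, so $\tau\,\wh{\blacktriangleright}\,\CI_b(x)$ is not simply $\CI_b(\tau\,\wh{\blacktriangleright}\,x)$; the root-insertion terms on both sides of \eqref{dgrafting} must be matched separately. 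Second, $\tilde{\uparrow}^{k}_{N_\tau}$ does not commute with $\wh{\blacktriangleright}$: the deformed insertion carries binomial weights of the form ${\Labn_v \choose \ell}$ in precisely the node decorations that $\tilde{\uparrow}$ modifies, so interchanging them requires a Chu--Vandermonde correction, not mere bookkeeping. If you take $\tau_1$ planted (zero root decoration, so $\tilde{\uparrow}$ is trivial, as in the remark after Proposition~\ref{plugging_grafting}), the second obstruction disappears and only the root-of-$\CI_b$ accounting remains; but the cleaner repair is to abandon the transfer and repeat the direct proof, using the $\wh{\curvearrowright}$-analogue of the compatibility identity above --- which is exactly what the paper means by ``the proof for \eqref{dgrafting} follows the same lines.''
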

\begin{proof} We perform the proof using the identity \eqref{deformed_insertion} and with $ \tau \in \CT_E^V$. It turns out that \eqref{dplugging} can be rewritten as 
\begin{equs}
\left( \tau \, \wh{\blacktriangleright}^{\nonroot}  \, \tau_{1} \right) \star_2 \tau_2 +  \tau_{1}  \star_2 \left( \tau \, \wh{\blacktriangleright}  \, \tau_2 \right) = \tau \, \wh{\blacktriangleright} \, \left( \tau_1 \star_2 \tau_2 \right).
\end{equs}
Then
\begin{equs}
\tau \, \wh{\blacktriangleright} \, \left( \tau_1 \star_2 \tau_2 \right) = \sum_{v \in N_{\tau_1} \setminus \lbrace\rho_{\tau_1} \rbrace}\tau \, \wh{\blacktriangleright}_v \, \left( \tau_1 \star_2 \tau_2 \right) + \sum_{v \in N_{\tau_2} }\tau \, \wh{\blacktriangleright}_v \, \left( \tau_1 \star_2 \tau_2 \right).
\end{equs}
It is straightforward to check that
\begin{equs}
\sum_{v \in N_{\tau_1} \setminus \lbrace\rho_{\tau_1} \rbrace}\tau \, \wh{\blacktriangleright}_v \, \left( \tau_1 \star_2 \tau_2 \right)  &  = \left( \sum_{v \in N_{\tau_1} \setminus \lbrace\rho_{\tau_1} \rbrace}\tau \, \wh{\blacktriangleright}_v \,  \tau_1 \right) \star_2 \tau_2 
\\ & = \left( \tau \, \wh{\blacktriangleright}^{\nonroot}  \, \tau_{1} \right) \star_2 \tau_2. 
\end{equs}
For $ v \in N_{\tau_2} $, one has
\begin{equs}
\tau \, \wh{\blacktriangleright}_v \, \left( \tau_1 \star_2 \tau_2 \right) & = (P_{v}\left( \tau_1 \star_2 \tau_2 \right) \star_2 \tau) \rhd_v  T_{v}\left( \tau_1 \star_2 \tau_2 \right)
\\ & = \sum_{(\tau_1)}\left( \tau_1^{(1)} \star_2 P_v(\tau_2) \right) \star_2 \tau) \rhd_v  \left( \tau^{(2)}_1 \star^{v}_2 T_v(\tau_2) \right)
\\ & = \tau_1 \star_2 \Big( \left( P_v(\tau_2) \right) \star_2 \tau) \rhd_v  \left( T_v(\tau_2) \right) \Big)
\\ & = \tau_{1}  \star_2 \left( \tau \, \wh{\blacktriangleright}_{v}  \, \tau_2 \right)
\end{equs}
where Sweedler's notations $ \tau_1^{(1)}, \tau_1^{(2)} $ correspond to the coproduct $ \Delta $ introduced in \eqref{delta_trees}  and we have used the identity:
\begin{equs}
\tau_1 \star_2 \left( \tau_2 \, \rhd_v \,  \tau_3 \right) = \sum_{(\tau_1)} \left( \tau_1^{(1)} \star_2 \tau_2\right) \rhd_v \left( \tau_1^{(2)} \star_2^{v} \tau_3\right)
\end{equs}
with  $ \tau_i \in \CT_E^V $, $ v \in N_{\tau_3} $ and $ \tau_3 $ has zero node decoration at the node $ v $. The proof for \eqref{dgrafting} follows the same lines.
\end{proof}

\begin{remark} The proof of Theorem~\ref{theo_cointeraction} does not depend on the deformation. It is the same proof if one replaces $ \star_2 $ by $ \star $ and $ \wh{\blacktriangleright} $ by $ \blacktriangleright $. The proof exploits the fact that $ \wh{\blacktriangleright} $ (resp. $ \blacktriangleright $) is defined in terms of $ \star_2  $ (resp. $ \star $) and  $ \rhd $.
\end{remark}

\begin{remark} The cointeraction described in Theorem~\ref{theo_cointeraction} has a counterpart for the map $ M $ defined in \eqref{def::M}:
\begin{equs}
\left( M^{\circ} \hattimes  M  \right) \Delta_2 = \Delta_2  M.
\end{equs}
This formulation appears in \cite{BR18} for defining more general renormalisation maps that could interact with the model given by the theory of Regularity Structures.
\end{remark}

\section{Applications}

\label{section::4}

In this section, we will see that the various deformed products introduced in the previous section have many applications among (S)PDEs. The first application is about a numerical scheme for dispersive PDEs. The second application focuses on the main coproducts which are used within singular SPDEs. The deformed grafting insertion has its origin from that field.

\subsection{Numerical Analysis}
\label{subsection::4.1}
In \cite{BS}, the authors derived a resonance-based numerical schemes for equations of the type
\begin{equation}\label{dis}
\begin{aligned}
	& i \partial_t u(t,x) 
	+   \mathcal{L}\left(\nabla, \frac{1}{\varepsilon}\right) u(t,x) 
	=\vert \nabla\vert^\alpha p\left(u(t,x), \overline u(t,x)\right)\\
	& u(0,x) = v(x), \quad (t,x) \in \R_+ \times \T^{d}
\end{aligned}
\end{equation}
where $ \CL$ is a differential operator and $ p $ is a monomial of the form $ p(u,\bar u) = u^{N} \bar u^{M} $.
One can rewrite Duhamel's formulation of \eqref{dis} in Fourier space:
\begin{equation}\label{duhLin_it_Fourier}
	\hat u_k(t) = e^{ it  P(k)} \hat v_k  - i  
	\vert \nabla\vert^{\alpha} (k) \, e^{ it  P(k)} \int_0^t e^{ -i\xi  P(k) } p_k(u(\xi),\bar u(\xi)) d\xi 
\end{equation}
where $P(k)$ denotes the differential operator $\mathcal{L}$ in Fourier space  
\begin{equs}
	P(k) = \mathcal{L}\left(\nabla,\frac{1}{\varepsilon}\right)(k), \quad
	p_k(u(t),\bar u(t)) = \sum_{k =\sum_{i} k_i - \sum_j \bar k_j} 
	\prod_{i=1}^N \hat u_{k_i}(t) \prod_{j=1}^M \bar{\hat{u}}_{\bar k_j}(t).
\end{equs}
The numerical scheme iterates this formulation and produces integrals that can be viewed as a character $ \Pi $ defined on a set of planted trees $ \CP_E^V $.

The decorations encode the $ k $, $ e^{itP} $ and polynomials in $ \xi $.
Decorated trees are of the form:
\begin{equs}
 \CI_{(o,m)}(\lambda_{k}^{\ell} T), \quad T  \in S(\CP_E^V)
\end{equs}
where $ V = \Z^{d} \times \N $ and $ E = \Lab \times \N $. The finite set $ \Lab $ collects different types and allows us to know if one has $ e^{itP} $ or $ \int_0^{t} e^{- i \xi P} \cdots d\xi $ (integrals in time correspond to a subset $ \Lab_+ \subset \Lab $). The second decoration on the edges given by  $ \N $ is associated to derivatives in time that one has to take in some Taylor expansions approximating the previous operators. The decoration on the node denoted by  $ \lambda^{\ell}_k $ has to be understood as an inert decoration $ k \in \Z^{d} $ for the frequency $ k $ and a decoration $ \ell \in \N $ which corresponds to the polynomial in time coming from approximations.
Then, one defines maps  $  \bar{\Delta}_{\text{\tiny{NA}}} : \CT_E^{V} \rightarrow  \CT_E^{V} \hattimes S(\CP_E^V) $ and  $  \Delta_{\text{\tiny{NA}}} : S(\CP_E^{V}) \rightarrow  S(\CP_E^{V}) \hattimes S(\CP_E^V) $ such that
\begin{equation} \label{def_deltas_NA}
\begin{aligned}
\bar{\Delta}_{\text{\tiny{NA}}} \lambda^{\ell}& = \lambda^{\ell} \hattimes \one ,\\
\bar{\Delta}_{\text{\tiny{NA}}} \CI_{(o,m)}(\lambda_{k}^{\ell} T)  & = \left( \CI_{(o,m)}( \lambda_{k}^{\ell} \cdot)  \hattimes \id  \right) \bar{\Delta}_{\text{\tiny{NA}}} T + \sum_{n \in \N} \frac{\lambda^{n}}{n!} \hattimes  \CI_{(o,m+n)}(\lambda_{k}^{\ell} T), \\
\Delta_{\text{\tiny{NA}}} \CI_{(o,m)}(\lambda_{k}^{\ell} T)  & = \left( \CI_{(o,m)}( \lambda_{k}^{\ell} \cdot)  \hattimes \id  \right) \bar{\Delta}_{\text{\tiny{NA}}} T +  \one \hattimes  \CI_{(o,m)}(\lambda_{k}^{\ell} T).
\end{aligned}
\end{equation}
The identities \eqref{def_deltas_NA} can be obtained by applying $ \CM^{(2)(1)} $ to \eqref{recur_def_deltaDCK}:
\begin{equs} \label{ident_NA}
\bar{\Delta}_{\text{\tiny{NA}}} = \CM^{(2)(1)} \bar{\Delta}_{\text{\tiny{DCK}}}, \quad \Delta_{\text{\tiny{NA}}} = \CM^{(2)(1)} \Delta_{\text{\tiny{DCK}}}.
\end{equs}
As a consequence of \eqref{ident_NA} and Theorem~\ref{DCK_def}, we obtain one of our main results
\begin{theorem} \label{Deformation_NA} The map $ \Delta_{\text{\tiny{NA}}} $ is a deformation of the Butcher-Connes-Kreimer coproduct. It is obtained by applying the Guin-Oudom procedure on $ \wh{\curvearrowright} $ which is a deformation of $ \curvearrowright $ given  by the map $ \Theta $.
\end{theorem}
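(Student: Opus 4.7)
The plan is to reduce the statement to the two results already established in the paper, namely Theorem~\ref{DCK_def} (which identifies $\Delta_{\text{\tiny{DCK}}}$ as the Guin-Oudom dual of $\wh{\curvearrowright}$) and Theorem~\ref{main} (which presents $\wh{\curvearrowright}$ as the transport of $\curvearrowright$ under the isomorphism $\Theta$). To do so, it suffices to verify the two identities \eqref{ident_NA}, after which the result follows formally: writing $\Delta_{\text{\tiny{NA}}} = \CM^{(2)(1)} \Delta_{\text{\tiny{DCK}}}$ amounts to swapping the two tensor factors, which does not affect the statement that the coproduct is obtained by dualising a Guin-Oudom product, but only reinterprets left/right convolution.

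First, I would compare the recursions. The definition \eqref{recur_def_deltaDCK} reads
\begin{equs}
\bar{\Delta}_{\text{\tiny{DCK}}} \CI_a(\tau) = \left( \id \hattimes \CI_a \right)  \bar{\Delta}_{\text{\tiny{DCK}}} \tau + \sum_{\ell \in \N^{d+1}} \frac{1}{\ell !} \CI_{a + \ell}(\tau) \hattimes X^{\ell},
\end{equs}
while the definition \eqref{def_deltas_NA} reads
\begin{equs}
\bar{\Delta}_{\text{\tiny{NA}}} \CI_{(o,m)}(X_{k}^{\ell} T) = \left( \CI_{(o,m)}(X_{k}^{\ell}\cdot) \hattimes \id \right) \bar{\Delta}_{\text{\tiny{NA}}} T + \sum_{n \in \N} \frac{X^n}{n!}\hattimes \CI_{(o,m+n)}(X_{k}^{\ell} T).
\end{equs}
Upon matching the decoration conventions (the role of $a$ played by the pair $(o,m)$ and the deformation variable $\ell$ replaced by $n$ acting on the second edge-decoration component only), applying $\CM^{(2)(1)}$ to the first identity exchanges the two tensor slots: the factor $\CI_a$ which multiplies on the right in the first slot in \eqref{recur_def_deltaDCK} appears on the left in the second slot in \eqref{def_deltas_NA}, and the summand $\CI_{a+\ell}(\tau)\hattimes X^\ell$ becomes $\tfrac{X^\ell}{\ell!}\hattimes \CI_{a+\ell}(\tau)$, which is exactly the form appearing in \eqref{def_deltas_NA}. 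A straightforward induction on the number of edges of $\tau$, combined with multiplicativity (the tree product on one side, the forest product on the other), propagates this compatibility to all trees and all forests. The same argument handles the unbarred version, where the extra summand $\CI_a(\tau)\hattimes \one$ is simply swapped to $\one\hattimes \CI_a(\tau)$.

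Having established \eqref{ident_NA}, the conclusion is immediate. By Theorem~\ref{DCK_def}, $\Delta_{\text{\tiny{DCK}}}$ is the dual of the associative product $\star_0$ obtained by applying the Guin-Oudom procedure to the pre-Lie algebra $(\CP_E^V, \wh{\curvearrowright})$. Since $\CM^{(2)(1)}$ is an involution, $\Delta_{\text{\tiny{NA}}}$ is dual to the associative product $\star_0^{\mathrm{op}}$, which is itself the Guin-Oudom product of the opposite pre-Lie structure and therefore remains a deformation of the corresponding Butcher-Connes-Kreimer type product. By Theorem~\ref{main}, $\wh{\curvearrowright}$ is itself the deformation of $\curvearrowright$ determined uniquely by the transport identity $\Theta(\sigma \curvearrowright^a \tau) = \Theta(\sigma)\,\wh{\curvearrowright}^a\,\Theta(\tau)$, extended to planted trees via \eqref{extension_theta}. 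Combining these two facts yields the theorem.

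The only step requiring genuine verification is the matching of \eqref{recur_def_deltaDCK} and \eqref{def_deltas_NA} under the flip $\CM^{(2)(1)}$, and the main (mild) obstacle is bookkeeping with the specific decoration set $V = \N^{d+1}\times \N$ and $E = \Lab \times \N$ used in \cite{BS}, where only the $\N$-component of the edge decoration is deformed while the $\Lab$-component stays inert. Once one restricts the deformation parameter $\ell \in \N^{d+1}$ appearing in \eqref{recur_def_deltaDCK} to the subgroup corresponding to the second component of $E$, the translation between $(a,\ell)$ and $(o,m,n)$ is direct, and no new calculation beyond the induction above is required.
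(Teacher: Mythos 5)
Your proposal is correct and matches the paper's own argument: the paper obtains Theorem~\ref{Deformation_NA} precisely by asserting the identities \eqref{ident_NA}, i.e.\ $\Delta_{\text{\tiny{NA}}} = \CM^{(2)(1)}\Delta_{\text{\tiny{DCK}}}$ and $\bar{\Delta}_{\text{\tiny{NA}}} = \CM^{(2)(1)}\bar{\Delta}_{\text{\tiny{DCK}}}$, and then citing Theorem~\ref{DCK_def} (Guin--Oudom duality for $\wh{\curvearrowright}$) together with Theorem~\ref{main} ($\wh{\curvearrowright}$ as the $\Theta$-transport of $\curvearrowright$), which is exactly your reduction, and your inductive matching of the recursions \eqref{recur_def_deltaDCK} and \eqref{def_deltas_NA} under the flip --- with the dictionary $(o,m)\leftrightarrow a$ and the deformation confined to the $\N$-component of $E=\Lab\times\N$ --- merely supplies detail the paper leaves implicit. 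One harmless looseness: the flipped coproduct is dual to the opposite of $\star_0$, and the opposite of a left pre-Lie product is right (not left) pre-Lie, but since $\CM^{(2)(1)}$ is a pure convention swap, nothing in the conclusion depends on that side remark.
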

The maps used in \cite{BS} are slightly different from \eqref{def_deltas_NA}. Indeed, for numerical schemes, we define the order $ r $ of the scheme a priori. This produces projections such that the sum over $ n  $ is finite. In the end we only keep trees of a certain size determined by the number of integrations (edges decorated by $ \Lab_+ $) and polynomials (node decorations).
There is another major simplification. In the scheme described in \cite{BS} there is no a priori derivatives on the edges. Therefore, $ m $ is zero inside the tree. The deformation \eqref{deformation_preLie} reads in this context for $ \tau_1, \tau_2 \in \CT_E^V $
\begin{equs} \label{deformation_preLie_NA}
\begin{aligned}
\tau_1 \, \widehat{\curvearrowright}^{(o,m)} \tau_2 & =   \sum_{v\in N_{\tau_2}}\sum_{\ell\in\N}{\Labn_v\choose \ell} \tau_1 \curvearrowright^{(o,m-\ell)}(\uparrow_v^{-\ell} \tau_2)
\\ & = \sum_{v\in N_{\tau_2}} {\Labn_v\choose m}\tau_1 \curvearrowright_v^{(o,0)}(\uparrow_v^{-m} \tau_2).
\end{aligned}
\end{equs}
One other simplification in the scheme comes from the way one approximates the various operators appearing within the iterated integrals. For $ \int_{0}^{t} e^{-i \xi P(k)} \cdots d\xi $ one uses a well-chosen Taylor expansion depending on the resonances. One Taylor-expands the lower part in the resonances and integrates exactly the dominant part. This will guarantee a minimisation of the local error.
For $  e^{ it  P(k)} $, there is no approximation. Therefore, at the algebraic level we will restrain the grafting in \eqref{deformation_preLie_NA} to an edge where $ o \in \Lab_+ $. 

The scheme is described by a family of characters $ (\Pi^n)_{n \in \N} $ on $ \CP_V^E $ where $ n $ corresponds to the regularity assumed a priori on the solution. The map $ \bar{\Delta}_{\text{\tiny{NA}}} $ is used for the local error analysis. Indeed, via a Birkhoff factorisation one constructs another character $ \hat \Pi^n $ such that
\begin{equs} \label{factorisation}
\hat \Pi^n = \left( \Pi^n \otimes (\CQ \circ \Pi^n \CA \cdot)(0) \right) \bar{\Delta}_{\text{\tiny{NA}}},
\end{equs}
where $ \CA $ is the antipode associated to $ \Delta_{\text{\tiny{NA}}} $ and $ \CQ $ is a projector which allows to single out oscillations.
Then one can derive one of the main theorems for the local error analysis of the scheme see \cite[Theorem 3.11]{BS}.
This Birkhoff factorisation is put  into perspective in \cite{BE} and it is compared with the usual one with the Butcher-Connes-Kreimer coproduct.

\subsection{Regularity Structures}
\label{subsection::4.2}

Trees  with both decorations on the edges and vertices were first introduced for singular SPDEs in \cite{BHZ}. They are efficient for encoding  stochastic iterated integrals coming from a mild formulation.
Indeed, let consider a system of SPDEs :
\begin{equs} \label{SPDEs}
	\partial_t u_{\Labhom} - \CL_{\Labhom} u_{\Labhom} = F_{\Labhom}(u,\nabla u, \ldots, \xi), \quad  (t,z) \in \R_+ \times \R^{d}, \, \Labhom \in \Lab_+
\end{equs}
where  $ \xi = (\xi_{\Labhom})_{\Labhom \in \Lab_-} $ are space-time noises, $ u = (u_{\Labhom})_{\Labhom \in \Lab_+} $ and  $ (\CL_{\Labhom})_{\Labhom \in \Lab_+} $ is a family of differential operators. The mild formulation of \eqref{SPDEs} is given by:
\begin{equs} \label{mild}
	u_{\Labhom} = K_{\Labhom} \ast \left(  F_{\Labhom}(u, \nabla u, \xi) \right),
\end{equs}
where $ K_{\Labhom} $ is the kernel associated to $ \CL_{\Labhom} $ and $ \ast  $ denotes space-time convolution. The main idea of Regularity Structures introduced by Martin Hairer in \cite{reg} is to iterate this formulation locally when singular noises $ \xi $ are replaced by their regularisation $ \xi_{\eps} $. Non-linearities $ F $ are Taylor-expanded around a base point $ x \in \R^{d+1} $ and one can construct recentered iterated integrals around that point.
These integrals give a local description
of the solution of \eqref{SPDEs}:
\begin{equs} \label{B-series}
u_{\Labhom}(y) = \sum_{\tau \in \bar \CT} \frac{\Upsilon^{F}_{\Labhom}[\tau](x)}{S(\tau)} \left( 
\Pi_x \tau \right)(y) + R_{\Labhom}(x,y),
\end{equs}
where $\Labhom \in \Lab_+ $, $ \bar \CT_{\Labhom} \subset \CT_E^V$ are trees generated by \eqref{mild} with $ V = \N^{d+1} $ and $ E = \Lab \times \N^{d+1} $ ($ \Lab = \Lab_+ \sqcup \Lab_- $), $ S(\tau) $ is the symmetry factor associated to $ \tau $, $ \Upsilon^{F}_{\Labhom}[\tau] $ are elementary differentials depending on $ u $ and its derivatives. The form of \eqref{B-series} is very similar to the one for B-series in numerical analysis.
The map $ \Pi_x : \CT_E^V \rightarrow \mathcal{C}^{\infty}  $ is defined as a character for the tree product such that
$ \left( \Pi_x \tau \right) \lesssim |x-y|^{|\tau|_{\s}} $ ; $ |\tau|_{\s} $ is the degree associated to $ \tau $ computed from the regularity of the noises and the  Schauder estimates given by the kernels $ K_{\Labhom} $ (see for more details \cite[Def. 5.12]{BHZ}). The remainder $ R_{\Labhom}(x,y)\lesssim |x-y|^{\alpha}   $ is better behaved in comparison to $ \left( \Pi_x \tau \right)(y) , \tau \in \bar \CT_{\Labhom}$ ($\alpha > |\tau|_{\s}$ ). The scaling $ \s $ depends on the differential operator $ \mathcal{L}_{\Labhom} $: for the Laplacian, it is $ (2,1,\ldots,1) $ where time counts double in comparison to the other spatial components.
The deformed pre-Lie product has been introduced in \cite{BCCH} for proving a morphism property of $ \Upsilon^{F}[\tau] $. Indeed, from \cite[Cor. 4.15]{BCCH} one has
\begin{equs}
\Upsilon^{F}_{\Labhom}[\tau_1 \wh{\curvearrowright}^{(o,m)} \tau_2 ] 
= \Upsilon^{F}_o[\tau_1] D_{\nabla^{m} u_{o}} \Upsilon^{F}_{\Labhom} [ \tau_2 ] 
\end{equs}
where $ D_{\nabla^{m} u_{o}} $ is the derivative in the variable $ \nabla^{m} u_{o} $.
This morphism property is crucial for understanding how the renormalisation  acts on the equation. The deformation $ \wh{\curvearrowright}^{(o,m)} $ has been studied in \cite{BCCH}. A universal property is obtained in \cite[Prop. 4.21]{BCCH} based on an extension of the proof given by Chapoton-Livernet in \cite{ChaLiv}. 
What is clearly missing in \cite{BCCH} is the point of view of the deformation and the construction of the coproduct for $ \Pi_x $ within Regularity Structures. Indeed, an adjoint has been identified in \cite[Def. 4.11]{BCCH} but it does not take into account the extraction of polynomials. The original recentering coproduct $ \Delta_{\text{\tiny{RC}}} : \CT_E^{V} \rightarrow  \CT_E^{V} \hattimes \CT_E^V $ is given in \cite{BHZ} (previously in \cite{reg} but with projections) by   
\begin{equation} \label{def_deltas_RC}
\begin{aligned}
\Delta_{\text{\tiny{RC}}} X & = X \hattimes \one + \one \hattimes X, \\
\Delta_{\text{\tiny{RC}}} \CI_{(o,m)}(\tau)  & = \left( \CI_{(o,m)} \hattimes \id  \right) \Delta_{\text{\tiny{RC}}} \tau + \sum_{n \in \N^{d+1}} \frac{X^{n}}{n!} \hattimes  \CI_{(o,m+n)}(\tau) .
\end{aligned}
\end{equation}
The map $ \Delta_{\text{\tiny{RC}}}  $ can be obtained from $ \Delta_2 $ by
\begin{equs} \label{ident_RC}
\Delta_{\text{\tiny{RC}}} = \CM^{(2)(1)} \Delta_2.
\end{equs}

\noindent As a consequence of \eqref{ident_RC} and Theorem~\ref{DP_deformation}, we obtain one of our main results
\begin{theorem} \label{Deformation_NA} The map $ \Delta_{\text{\tiny{RC}}} $ is a deformation of the Butcher-Connes-Kreimer coproduct. It is obtained by applying the Guin-Oudom procedure on $ \wh{\rhd} $ which is a deformation of $ \rhd $.
\end{theorem}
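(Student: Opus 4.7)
The plan is to reduce the statement to Theorem~\ref{DP_deformation} by verifying the identity \eqref{ident_RC}, namely $\Delta_{\text{\tiny{RC}}} = \CM^{(2)(1)} \Delta_2$, where $\CM^{(2)(1)}$ denotes the flip on the tensor factors. Once this identity is established, the theorem follows by transporting the Guin--Oudom/duality picture developed in Section~\ref{section::3} from $\Delta_2$ to $\Delta_{\text{\tiny{RC}}}$: since $\star_2$ arises from $\wh\rhd$ via Guin--Oudom, and $\wh\rhd$ is a deformation of $\rhd$ in the sense of \eqref{defor_prop_plugging}, the same properties are inherited by $\Delta_{\text{\tiny{RC}}}$, yielding the claimed deformation statement.

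First I would verify \eqref{ident_RC} by induction on the number of edges of $\tau$. On monomials $X^k$, both $\Delta_2$ and $\Delta_{\text{\tiny{RC}}}$ are symmetric in their two tensor factors (by the recursion \eqref{ploynomial_splitting} extended multiplicatively), so $\CM^{(2)(1)} \Delta_2 X^k = \Delta_2 X^k = \Delta_{\text{\tiny{RC}}} X^k$. For the inductive step, I apply $\CM^{(2)(1)}$ to the recursive definition of $\Delta_2 \CI_{(o,m)}(\tau)$ given in Theorem~\ref{DP_deformation}. The first summand $(\id \hattimes \CI_{(o,m)}) \Delta_2 \tau$ gets mapped to $(\CI_{(o,m)} \hattimes \id) \CM^{(2)(1)} \Delta_2 \tau$, which by the induction hypothesis equals $(\CI_{(o,m)} \hattimes \id) \Delta_{\text{\tiny{RC}}} \tau$. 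The polynomial summand $\sum_n \frac{1}{n!} \CI_{(o,m+n)}(\tau) \hattimes X^n$ is exchanged to $\sum_n \frac{X^n}{n!} \hattimes \CI_{(o,m+n)}(\tau)$. Comparing with \eqref{def_deltas_RC}, the two recursions coincide, and since $\Delta_{\text{\tiny{RC}}}$ is extended multiplicatively with respect to the forest/tree product in the same way as $\Delta_2$ (after composition with the identification $\CK$ that merges roots, used in the same spirit as in Proposition~\ref{plugging_grafting}), the identity propagates to all trees.

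With \eqref{ident_RC} in hand, Theorem~\ref{DP_deformation} gives that $\Delta_2$ is dual to $\star_2$, which in turn is the associative product produced by the Guin--Oudom construction applied to the pre-Lie product $\wh\rhd$ on $\CT_E^V$. Applying the swap $\CM^{(2)(1)}$ corresponds merely to reading the same coproduct with the convention that the ``extracted'' subtree appears on the left; this does not affect the Guin--Oudom origin, it only transposes the left/right roles of the enveloping algebra duality. Combined with the fact, established in Section~\ref{section::3}, that $\wh\rhd$ is a genuine deformation of the plugging product $\rhd$ (with $\sigma \wh\rhd \tau = \sigma \rhd \tau + \text{lower grading}$), this shows that $\Delta_{\text{\tiny{RC}}}$ is a deformation of the (plugging-type) Butcher--Connes--Kreimer coproduct obtained from $\rhd$ by Guin--Oudom.

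The main obstacle I anticipate is bookkeeping in the inductive verification of \eqref{ident_RC}: one must carefully track how polynomial decorations at the root interact with the identification $\CK : S(\CT_E^V) \to \CT_E^V$ (which merges roots and adds their decorations), and check that the multiplicative extensions of $\Delta_2$ and $\Delta_{\text{\tiny{RC}}}$ are compatible under $\CM^{(2)(1)}$. Concretely, one has to ensure that the splitting $\Delta X^k = \sum_\ell \binom{k}{\ell} X^\ell \otimes X^{k-\ell}$, which enters $\Delta_2$ on the polynomial part, matches the way $X^n/n!$ appears on the left in $\Delta_{\text{\tiny{RC}}}$. This is precisely the combinatorial matching performed in the last step of the proof of Theorem~\ref{DCK_def} (where binomial and factorial factors are reconciled against the symmetry factor), and the same argument adapts here verbatim.
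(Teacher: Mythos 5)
Your proposal is correct and takes essentially the same approach as the paper: the paper obtains Theorem~\ref{Deformation_NA} directly from the identity $\Delta_{\text{\tiny{RC}}} = \CM^{(2)(1)} \Delta_2$ of \eqref{ident_RC} combined with Theorem~\ref{DP_deformation}, exactly as you do. Your inductive verification of \eqref{ident_RC} (base case on monomials, flip commuting with $\id \hattimes \CI_{(o,m)}$, compatibility of the multiplicative extensions via $\CK$) merely fills in a step the paper leaves as an observation, and it is sound.
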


As for numerical analysis, the sum over $ n $ is truncated in \eqref{def_deltas_RC}. One keeps only  trees with branches connected to the root of positive degree: $ |\CI_{(o,m+n)}(\tau)|_{\s} > 0 $. The idea is that one has to subtract a Taylor expansion up to the regularity given by the degree. We introduce a new space $ \CT_+ $ which corresponds to trees having all the branches outgoing the root to be of positive degree:
\begin{equs}
\label{Tplus}
	 \mathcal{T}_{+} 
 	:=   \Big\lbrace X^{k} \prod_{i=1}^{n} \CI_{(o_i,p_i)}(\tau_i), \, 
	| \CI_{(o_i,p_i)}(\tau_i) |_{\s} > 0, \tau_i \in \CT \Big\rbrace, 
\end{equs}
where $ \CT \subset \CT_E^V $ is the set of trees appearing in the right hand side of \eqref{SPDEs}. These trees are generated by a subcritical and normal complete rule $ R $ coming from the non-linearity in \eqref{SPDEs}. We  refer the reader to  \cite[Section 5]{BHZ}, where those rules have been detailed.
One can then define from $ \Delta_{\text{\tiny{RC}}} $ a coaction $ \Deltap : \CT 
\rightarrow \CT \otimes \CT_+ $ obtained from $ \eqref{def_deltas_RC} $ by performing several projections. They allow us to move from $ \CT_E^V $ to $ \CT $ and $ \CT_+ $. Then, one of the central results in \cite{reg,BHZ} is:
\begin{equs} \label{def_Pi_x}
\Pi_x = \left( \Pi \otimes (\Pi \tilde{\CA}_+ \cdot)(x) \right) \Deltap,
\end{equs}
where $ \tilde{\CA}_+  $ is a twisted antipode from $ \CT_+ $ into $ \hat \CT_+ $ and $ \Pi $ is a character on  trees computing  iterated integrals without any recentering. The space $ \hat \CT_+ $ is similar to $ \CT_+ $ except that one does not project according to the degree in \eqref{Tplus}. The identity \eqref{def_Pi_x} has been interpreted as a Bogoliubov recursion in \cite{BE}.

In many cases when one wants to remove the mollification on the noises ($ \eps  $ goes to zero), the map $ \Pi_x \tau$ fails to converge. One needs to perfom some renormalisation via the BPHZ scheme \cite{BP,Hepp,Zim}. This renormalisation has been implemented in \cite{BHZ} via an extraction-contraction type coproduct. Then, from the group of characters associated to this coproduct, one is able to renormalise the model whose convergence is proved in \cite{CH16}.
This extraction-contraction map $ \Delta_{\text{\tiny{RN}}} : S(\CT_E^V) \rightarrow   S(\CT_E^V) \hattimes S(\CT_E^V)$ is given by:
\begin{equs} \label{def_extraction_contraction}
\Delta_{\text{\tiny{RN}}} = \left( \CM \hattimes \id \right)  \left( \id \hattimes \Delta_{\text{\tiny{RN}}}^{\nonroot} \right) \Delta_{\text{\tiny{RC}}}.
\end{equs}
The identity \eqref{def_extraction_contraction} is a recursive formulation based on \cite[Prop. 44]{BR18}. The second term $ \Delta_{\text{\tiny{RN}}}^{\nonroot} $ does  not extract a tree at the root whereas $ \Delta_{\text{\tiny{RC}}} $ extracts such a tree. Then $ \Delta_{\text{\tiny{RN}}}^{\nonroot} $ is iterated on $ \Delta_{\text{\tiny{RC}}} $ for extracting other trees outside the root. A non-recursive definition could be found in \cite{BHZ}. The recursive definition is useful for establishing the connection with the coproducts constructed from the Guin-Oudom procedure. The map $ \Delta_{\text{\tiny{RN}}}  $ is actually equal to 
$ \Delta_1 $.
Indeed, one has from \eqref{ident_RC} 
\begin{equs}
\mathcal{M}^{(13)(2)} \left(   \Delta_{\circ}  \hattimes \id \right) \Delta_2 & = \mathcal{M}^{(13)(2)} \left(   \Delta_{\circ}  \hattimes \id \right) \mathcal{M}^{(1)(2)}\Delta_{\tiny{\text{RC}}}
\\ & = \left( \CM \hattimes \id \right)  \left(  \id  \hattimes \Delta_{\circ}  \right) \Delta_{\tiny{\text{RC}}}.
\end{equs}
We conclude by matching the two recursive identities recursively and $ \Delta_{\circ} = \Delta_{\text{\tiny{RN}}}^{\nonroot} $.
As a consequence of  Theorem~\ref{deformed_E_C}, we obtain one of our main results
\begin{theorem} \label{Deformation_EC} The map $ \Delta_{\text{\tiny{RN}}} $ is a deformation of the extraction-contraction coproduct. It is obtained by applying the Guin-Oudom procedure on $ \wh{\blacktriangleright}  $ which is a deformation of $ \blacktriangleright $.
\end{theorem}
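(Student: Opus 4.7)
The plan is to reduce the theorem to an equality of coproducts $\Delta_{\text{\tiny{RN}}}=\Delta_1$, since Theorem~\ref{deformed_E_C} already identifies $\Delta_1$ as the dual of $\star_1$ (obtained by the Guin--Oudom procedure from $\wh\blacktriangleright$), and the definition of $\wh\blacktriangleright$ in \eqref{deformed_insertion} exhibits it as the replacement of $\star$ by $\star_2$ inside the Chapoton--Livernet formula \eqref{identity_insertion}, hence as a deformation of $\blacktriangleright$. The deformation claim for $\Delta_{\text{\tiny{RN}}}$ will then follow from the deformation claims already established for $\Delta_2$ (Theorem~\ref{DP_deformation}) and for the nonroot insertion at each step.

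First I would rewrite the recursive definition \eqref{def_extraction_contraction} of $\Delta_{\text{\tiny{RN}}}$ using \eqref{ident_RC}, which replaces $\Delta_{\text{\tiny{RC}}}$ by $\mathcal{M}^{(1)(2)}\Delta_2$. The computation sketched just above the statement of the theorem then yields
\begin{equs}
\Delta_{\text{\tiny{RN}}}=\bigl(\CM\hattimes\id\bigr)\bigl(\id\hattimes\Delta_{\text{\tiny{RN}}}^{\nonroot}\bigr)\mathcal{M}^{(1)(2)}\Delta_2
=\mathcal{M}^{(13)(2)}\bigl(\Delta_{\text{\tiny{RN}}}^{\nonroot}\hattimes\id\bigr)\Delta_2.
\end{equs}
Comparing this with the recursive definition \eqref{recursive_def_delta1} of $\Delta_1$, it suffices to show $\Delta_\circ=\Delta_{\text{\tiny{RN}}}^{\nonroot}$ on $\CT_E^V$, where $\Delta_\circ$ was defined in \eqref{Deltacirc} as the multiplicative map in the tree product with $\Delta_\circ X^k=\one\hattimes X^k$ and $\Delta_\circ\CI_a(\tau)=(\id\hattimes\CI_a)\Delta_1\tau$.

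The identification $\Delta_\circ=\Delta_{\text{\tiny{RN}}}^{\nonroot}$ is proved by induction on the number of edges: the non-extraction at the root is exactly the content of the relation $\Delta_{\text{\tiny{RN}}}^{\nonroot}\CI_a(\tau)=(\id\hattimes\CI_a)\Delta_{\text{\tiny{RN}}}\tau$, which is the structural defining property of the extraction-contraction coproduct outside the root (this is the non-recursive description of \cite{BHZ} and the recursive one of \cite{BR18} that the authors recalled), and multiplicativity in the tree product holds for both maps. Feeding this identification back into the displayed formula gives $\Delta_{\text{\tiny{RN}}}=\Delta_1$.

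With the identification in hand, Theorem~\ref{deformed_E_C} shows that $\Delta_{\text{\tiny{RN}}}=\Delta_1$ is precisely the coproduct dual to the associative product $\star_1$ produced by the Guin--Oudom construction from the pre-Lie product $\wh\blacktriangleright$. Finally, the deformation statement is immediate from the definition \eqref{deformed_insertion}: $\sigma\,\wh\blacktriangleright_v\,\tau=(P_v(\tau)\star_2\sigma)\rhd_v T_v(\tau)$, and since $\star_2$ is a deformation of $\star$ by Theorem~\ref{DP_deformation} (lower-grading terms do not affect the leading insertion), $\wh\blacktriangleright$ coincides with $\blacktriangleright$ modulo strictly lower grading terms in the sense of \eqref{grading}. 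The main technical obstacle is the bookkeeping needed to check that the two recursive formulas for $\Delta_\circ$ and $\Delta_{\text{\tiny{RN}}}^{\nonroot}$ produce the same combinatorial coefficients when extracting subforests touching polynomial node decorations, which is handled as in the proof of Theorem~\ref{DCK_def} via the Chu--Vandermonde identity of Section~\ref{chuvdm}.
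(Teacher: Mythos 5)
Your proposal is correct and follows essentially the same route as the paper: rewrite $\Delta_{\text{\tiny{RN}}}$ via \eqref{ident_RC}, match the resulting recursion against \eqref{recursive_def_delta1} to identify $\Delta_{\circ} = \Delta_{\text{\tiny{RN}}}^{\nonroot}$ and hence $\Delta_{\text{\tiny{RN}}} = \Delta_1$, then invoke Theorem~\ref{deformed_E_C} together with the definition \eqref{deformed_insertion} of $\wh{\blacktriangleright}$ as $\blacktriangleright$ with $\star$ replaced by $\star_2$. The only (harmless) difference is your closing appeal to the Chu--Vandermonde identity, which is not actually needed here: once the two recursions are matched they are formally identical, so the combinatorial coefficients agree automatically.
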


As for the previous coproduct, one also performs projections for $ \Delta_{\text{\tiny{RN}}} $. Indeed, one defines  a coaction $ \Deltam : \CT \rightarrow \CT_- \otimes \CT $ from it, where the extraction is limited to trees of negative degree. This projection excludes single nodes and one denotes by $ \CT_- $ the  forests whose trees are in $ \CT $ and each tree is of negative degree.  
Then, the BPHZ renormalisation map is given in \cite{BHZ} by
\begin{equs}
 \label{def_M_BPHZ}
M_{\text{\tiny{BPHZ}}} = \left( \mathbb{E}(\Pi \tilde{\CA}_- \cdot)\otimes \id \right) \Deltam
\end{equs}
where $  \tilde{\CA}_- : \CT_- \rightarrow \hat \CT_- $ is a twisted antipode and $ \hat \CT_- $ corresponds to forests whose trees are in $  \CT $.  The identity \eqref{def_M_BPHZ} has also been interpreted as a Bogoliubov recursion in \cite{BE}. One of the main results in \cite{BHZ} is Theorem 6.16 which gives a simple formula for the renormalised model $ \hat \Pi_x $:
\begin{equs}
\label{renormalised_model_formula}
\hat \Pi_x = \Pi_x M_{\text{\tiny{BPHZ}}}.
\end{equs}
The fact that this definition gives again a model relies on the cointeraction (see \cite[Theorem 5.37]{BHZ}) obtained when one adds extended decorations in the regularity structures trees. At the level of the coactions, it reads:
\begin{equs} \label{cointeraction_-_+}
	\mathcal{M}^{(13)(2)(4)}\left(  \Deltam \otimes  \Deltam\right)
	 \Deltap = \left( \id \otimes \Deltap \right) \Deltam, 
\end{equs}
where for $ \bar \tau_1, \bar \tau_3 \in \CT_- $, $ \tau_4 \in \CT_+ $ and $ \tau_2 \in \CT $
\begin{equs}
	\mathcal{M}^{(13)(2)(4)}\left( \bar\tau_1 \otimes \tau_2 \otimes \bar \tau_3 
	\otimes \tau_4 \right) = \bar \tau_1 \cdot \bar \tau_3 \otimes 
	\tau_2 \otimes \tau_4.
\end{equs}
We have denoted the forest product by $ \cdot $ and the coaction $ \Deltam  $ is extended as a coaction on $ \CT_+ $. As a consequence of Theorem~\ref{theo_cointeraction}, we obtain the following cointeraction
\begin{equs} \label{cointeraction_3.19}
\mathcal{M}^{(13)(2)(4)}\left(  \Delta_{1}^{\nonroot} \hattimes  \Delta_{1} \right)
	 \Delta_{2} = \left( \id \hattimes \Delta_{2} \right) \Delta_{1}.
\end{equs}
Then by rewritting $\Delta_{2}$ as $\Delta_{\text{\tiny{RC}}}$, we get
\begin{theorem} \label{cointeraction_NA_EC} The map $ \Delta_{\text{\tiny{RC}}} $ is in cointeraction $ \Delta_{\text{\tiny{EC}}} $ in the sense that: 
\begin{equs}
\mathcal{M}^{(13)(2)(4)}\left(  \Delta_{\text{\tiny{RN}}} \hattimes  \Delta_{\tiny{\text{RN}}}^{\nonroot} \right)
	 \Delta_{\text{\tiny{RC}}} = \left( \id \hattimes \Delta_{\text{\tiny{RC}}} \right) \Delta_{\text{\tiny{RN}}}. 
\end{equs}
\end{theorem}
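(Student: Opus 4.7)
The plan is to deduce Theorem~\ref{cointeraction_NA_EC} as a direct translation of equation~\eqref{cointeraction_3.19}, which was itself obtained by dualising the pre-Lie cointeraction \eqref{dplugging} of Theorem~\ref{theo_cointeraction}. Two ingredients already in hand do all the work: the identification $\Delta_{\text{\tiny{RN}}}=\Delta_1$ (and hence $\Delta_{\text{\tiny{RN}}}^{\nonroot}=\Delta_{\circ}=\Delta_1^{\nonroot}$), established just above the theorem statement, and the identity $\Delta_{\text{\tiny{RC}}}=\mathcal{M}^{(2)(1)}\Delta_2$ recorded in \eqref{ident_RC}.

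First, I would substitute $\Delta_2=\mathcal{M}^{(2)(1)}\Delta_{\text{\tiny{RC}}}$ (using that the flip is an involution) into both sides of \eqref{cointeraction_3.19}. On the right-hand side, the factor $\mathcal{M}^{(2)(1)}\Delta_{\text{\tiny{RC}}}$ sitting after $\id\hattimes$ becomes, after applying the outer swap $\id\hattimes\mathcal{M}^{(2)(1)}$, exactly the desired right-hand side $(\id\hattimes\Delta_{\text{\tiny{RC}}})\Delta_{\text{\tiny{RN}}}$. On the left-hand side, the inner flip $\mathcal{M}^{(2)(1)}$ commutes past the tensor $\Delta_1^{\nonroot}\hattimes\Delta_1$ by swapping the two coproducts into the order $\Delta_1\hattimes\Delta_1^{\nonroot}=\Delta_{\text{\tiny{RN}}}\hattimes\Delta_{\text{\tiny{RN}}}^{\nonroot}$, at the cost of a permutation of the four output slots.

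Second, I would apply $\id\hattimes\mathcal{M}^{(2)(1)}$ to both sides in order to restore the canonical factor order $S(\CT_E^V)\hattimes\CT_E^V\hattimes S(\CT_E^V)$ required by the statement. The crucial observation that makes the reordering clean is that the two slots merged by $\mathcal{M}^{(13)(2)(4)}$ both lie in $S(\CT_E^V)$, where the forest product is commutative; consequently the merge is insensitive to the flip of its two arguments. After this rearrangement the left-hand side reads $\mathcal{M}^{(13)(2)(4)}(\Delta_{\text{\tiny{RN}}}\hattimes\Delta_{\text{\tiny{RN}}}^{\nonroot})\Delta_{\text{\tiny{RC}}}$, which matches the theorem exactly.

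The only real obstacle is notational: one must track carefully how the slot-merging operators $\mathcal{M}^{(\cdots)}$ compose with the various flips $\mathcal{M}^{(2)(1)}$. I would verify the bookkeeping by expanding a generic element in iterated Sweedler notation as $\tau^{(1,1)}\otimes\tau^{(1,2)}\otimes\tau^{(2,1)}\otimes\tau^{(2,2)}$ and checking that both sides collapse to $\tau^{(1,1)}\cdot\tau^{(2,1)}\otimes\tau^{(2,2)}\otimes\tau^{(1,2)}$, invoking commutativity of the forest product for the implicit reordering of the merged factors. Beyond this combinatorial check, no new input is required: the theorem is a corollary of Theorem~\ref{theo_cointeraction} combined with the conventions \eqref{ident_RC} and the identification $\Delta_1=\Delta_{\text{\tiny{RN}}}$.
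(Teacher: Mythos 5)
Your proposal is correct and follows essentially the same route as the paper: the paper also derives the theorem by dualising Theorem~\ref{theo_cointeraction} into \eqref{cointeraction_3.19} and then ``rewriting $\Delta_2$ as $\Delta_{\text{\tiny{RC}}}$'' via \eqref{ident_RC}, together with the identifications $\Delta_{\text{\tiny{RN}}}=\Delta_1$ and $\Delta_{\text{\tiny{RN}}}^{\nonroot}=\Delta_{\circ}$. Your explicit bookkeeping of the flips $\mathcal{M}^{(2)(1)}$ against the merge $\mathcal{M}^{(13)(2)(4)}$, justified by the commutativity of the forest product in the merged slot, is exactly the step the paper leaves implicit, and your verification of it is sound.
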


\begin{remark}
The cointeractions \eqref{cointeraction_-_+}  and \eqref{cointeraction_3.19} seem to differ because of the term $ \Delta_{\text{\tiny{RN}}}^{\nonroot}   $. In fact, for avoiding the extraction at the root, the authors in \cite{BHZ} use colors. The root of any tree in $ \CT_+ $ or $ \hat \CT_+ $ is colored and the map $ \Deltam  $ is extended to those trees by extracting trees which do not contain a node of that color. 
\end{remark}

\newpage
\appendix
\section{Symbolic index}
In this appendix, we collect the most used symbols of the article, together
with their meaning and the page where they were first introduced.
 \begin{center}
\renewcommand{\arraystretch}{1.1}
\begin{longtable}{lll}
\toprule
Symbol & Meaning & Page\\
\midrule
\endfirsthead
\toprule
Symbol & Meaning & Page\\
\midrule
\endhead
\bottomrule
\endfoot
\bottomrule
\endlastfoot
 $\curvearrowright^{a}$ & Grafting operator on $ \CT_E^V $ with an edge decorated by $ a $ & \pageref{grafting_a}
\\ 
 $\wh{\curvearrowright}^{a}$ & Deformation of $ \curvearrowright^{a} $ by $ \Theta $ & \pageref{deformed_grafting_a}
\\ 
$\curvearrowright$ & Grafting operator on $ \CP_E^V $ & \pageref{grafting_planted_trees}
\\
 $\wh{\curvearrowright}$ & Deformation of $ \curvearrowright $ by $ \Theta $ & \pageref{deformed_grafting_planted_trees}
 \\ 
 $\bar{\curvearrowright}$ & Product $\bar{\curvearrowright}  : S(\CT_E^V) \hattimes \CT_E^V \rightarrow \CT_E^V $ defined from $ \wh{\curvearrowright} $ & \pageref{recursive_deformed_grafting}
 \\ 
 $ \rhd $  & Plugging product on $ \CT_E^{V} $  & \pageref{plugging_p}
 \\   
 $ \blacktriangleright $  & Insertion product on $ \CT_E^{V} $  & \pageref{insertion_i}
 \\ 
 $ \widetilde{\rhd} $  & Deformed Plugging product by $ \Theta $ & \pageref{theta_plugging}
 \\ 
 $ \widetilde{\blacktriangleright} $  & Deformed Insertion product by $ \Theta $ & \pageref{theta_insertion}
 \\    
 $ \wh{\rhd} $  & Deformed Plugging product obtained from   $ \widetilde{\rhd}  $ & \pageref{deformed_plugging_p}
  \\    
 $ \wh{\blacktriangleright} $  & Deformed Insertion product obtained from   $ \widetilde{\blacktriangleright}  $ & \pageref{deformed_insertion_i}
 \\ 
  $\star $ & Product obtained from  $ \rhd $ by the Guin-Oudom procedure& \pageref{star_star}
\\ 
 $\star_0$ & Product obtained from  $ \wh{\curvearrowright} $ by the Guin-Oudom procedure& \pageref{star_0}
  \\ 
  $\star_2 $ & Product obtained from  $ \wh{\rhd} $ by the Guin-Oudom procedure& \pageref{star_2}
\\ 
  $\tilde{\star}_2 $ & Product obtained from  $ \widetilde{\rhd} $ by the Guin-Oudom procedure& \pageref{tilde_star_2}
  \\ 
  $ \star_1 $ & Product obtained from  $  \wh{\blacktriangleright} $ by the Guin-Oudom procedure& \pageref{star_1}
\\
$\Delta_{\text{\tiny{DCK}}}$ & Coproduct $ \Delta_{\text{\tiny{DCK}}} :  S(\CP_E^V) \rightarrow S(\CP_E^V) \hattimes S(\CP_E^V)$ dual of $ \star_0 $ & \pageref{delta_dck_bar}
\\
$\bar{\Delta}_{\text{\tiny{DCK}}}$ & Coproduct $ \bar{\Delta}_{\text{\tiny{DCK}}} :  \CT_E^V \rightarrow S(\CP_E^V) \hattimes \CT_E^V$ dual of $ \bar{\curvearrowright} $ & \pageref{delta_dck_bar}
\\
$ \Delta_{\text{\tiny{DP}}} $ & Coproduct $\Delta_{\text{\tiny{DP}}}: \mathcal{T}_E^V  \rightarrow \hat S(\mathcal{T}_E^V ) \hattimes \mathcal{T}_E^V $ & \pageref{delta_dp}
\\
$ \bar{\Delta}_{\text{\tiny{DP}}} $ & Coproduct $\bar{\Delta}_{\text{\tiny{DP}}}: \mathcal{T}_E^V  \rightarrow \hat S(\mathcal{T}_E^V ) \hattimes \mathcal{T}_E^V $ defined from
$ \Delta_{\text{\tiny{DP}}} $ & \pageref{delta_dp_bar}
\\
$\Delta_{\circ}$ & Coproduct $ \Delta_{\circ} :  S(\CT_E^V) \rightarrow S(\CT_E^V) \hattimes S(\CT_E^V) $ defined from $ \Delta_1 $ & \pageref{delta_0_0}
\\
$\Delta_1$ & Coproduct $ \Delta_1 :  S(\CT_E^V) \rightarrow S(\CT_E^V) \hattimes S(\CT_E^V) $  dual of $ \star_1 $ & \pageref{delta_1_1}
\\
$\Delta_2$ & Coproduct $ \Delta_2 = \left( \CC \hattimes \id \right) \Delta_{\text{\tiny{DP}}} $ dual of $ \star_2 $   & \pageref{delta_2_2}
\\
$\CC$ & Linear  map $ \CC : \hat{S}(\CT_E^V) \rightarrow S(\CT_E^V) $  & \pageref{cal_c}
\\
$\CK$ & Linear map $ \CK : S(\CT_E^V) \rightarrow \CT_E^V $ merging the roots & \pageref{ident_i}
\\
$\Theta$ & Linear isomorphism $ \Theta : \CT_E^V \rightarrow \CT_E^V  $  sending $\curvearrowright^{a}  $ to $\wh{\curvearrowright}^{a}  $  & \pageref{theta_iso}
\\
$\CP_E^V$ & Planted trees with edges decorated by $ E $ and nodes by $ V $  & \pageref{planted_p}
\\
$S(\CP_E^V)$ & Symmetric space over $ \CP_E^V $  & \pageref{forest_planted_p}
\\
$S(\CT_E^V)$ & Symmetric space over $ \CT_E^V $  & \pageref{forest_f}
\\
$\hat{S}(\CT_E^V)$ & Decorated forests with one distinguished tree & \pageref{forest_f_d}
\\
$\CT_E^V$ & Rooted trees with edges decorated by $ E $ and nodes by $ V $  & \pageref{rooted_r}
 \end{longtable}
 \end{center}
\endappendix

\end{document}